\documentclass{amsart}
\usepackage{amsmath, amssymb}

\input xy
\xyoption{all}

\newtheorem{theorem}{Theorem}[section]
\newtheorem{lemma}[theorem]{Lemma}
\newtheorem{corollary}[theorem]{Corollary}
\newtheorem{fact}[theorem]{Fact}
\newtheorem{proposition}[theorem]{Proposition}

\theoremstyle{definition}
\newtheorem{example}[theorem]{Example}
\newtheorem{remark}[theorem]{Remark}
\newtheorem{definition}[theorem]{Definition}
\newtheorem{notation}[theorem]{Notation}

\def\spec{\operatorname{Spec}}

\def\id{\operatorname{id}}
\def\hom{\operatorname{Hom}}
\def\r{\operatorname{R}}

\def\O{\mathcal{O}}
\def\I{\mathcal I}
\def\E{\mathcal E}
\def\cE{\mathcal E}
\def\B{\mathcal B}
\def\F{\mathcal F}
\def\cA{\operatorname{Arc}}
\def\fm{\mathfrak m}
\def\scoj{\mathit{coJ}}
\def\jet{\operatorname{Jet}}
\def\mod{\operatorname{mod}}

\def\alg{\operatorname{alg}}

\def\im{\operatorname{Im}}

\begin{document}

\title{Jet and Prolongation spaces}

\author{Rahim Moosa}
\address{
University of Waterloo\\
Department of Pure Mathematics\\
200 University Avenue West\\
Waterloo, Ontario \  N2L 3G1\\
Canada}
\email{rmoosa@math.uwaterloo.ca}

\thanks{R. Moosa was supported by an NSERC Discovery Grant.}

\author{Thomas Scanlon}
\address{
University of California, Berkeley\\
Department of Mathematics\\
Evans Hall\\
Berkeley, CA \ 94720-3480\\
USA}
\email{scanlon@math.berkeley.edu}

\thanks{T. Scanlon was partially supported by NSF Grants DMS-0301771 and CAREER
DMS-0450010, an Alfred P. Sloan Fellowship, and a Templeton Infinity Grant.}

\date{June 25, 2008}

\subjclass[2000]{Primary 12H99, 14A99.}

\begin{abstract}
The notion of a prolongation of an algebraic variety is developed in an abstract setting that generalises the difference and (Hasse) differential contexts.
An {\em interpolating} map that compares these prolongation spaces with algebraic jet spaces is introduced and studied.
\end{abstract}

\maketitle
\bigskip
\section{Introduction}

To a differentiable manifold $M$ one may associate the tangent bundle $TM$ which is itself a differentiable manifold whose points encode the information of a point in $M$ together with a tangent direction.  Through the Zariski tangent space construction, there is a natural extension of the notion of a tangent bundle to algebraic varieties and more generally to schemes.  This algebraic version of the tangent bundle admits several different interpretations in terms of derivations, dual number valued points, maps on the contangent sheaf, infinitesimal neighborhoods, \emph{et cetera}.  In  generalising the tangent bundle construction to produce spaces adapted to higher differential structure, the various aspects of the tangent bundle diverge and one may study jet spaces (in the sense of differential geometry), higher order infinitesimal neighborhoods (in the sense of Grothendieck), sheaves of differential operators, and arc spaces amongst other possibilities.  

Jet and arc spaces and their ilk appear in difference and differential algebra as prolongation spaces used to algebraize difference and differential varieties.  For example, if $(K,\partial)$ is a differential field and $X$ is an algebraic variety defined over the $\partial$-constants of $K$, then for any $K$-point $a \in X(K)$, relative to the usual presentation of the Zariski tangent bundle, we have 
$(a,\partial(a)) \in TX(K)$.  Generalizing these considerations to higher order differential operators, one may understand algebraic differential equations in terms of algebraic subvarieties of the arc spaces of algebraic varieties.  The constructions employed in difference and differential algebra bear many formal analogies and may be understood as instances of a general theory of geometry over rings with distinguished operators.  

In this paper we lay the groundwork for a careful study of several of these constructions of these spaces encoding higher order differential structure.  For us, the main goal is to develop a robust theory of linearization for generalised difference and differential equations, but to achieve this end we must study the properties of jet, arc and prolongation spaces in general.  

This paper is organized as follows.  We begin by discussing the Weil restriction of scalars construction.  This construction is, of course, well-known but we were unable to find a sufficiently detailed account in the literature.  We then introduce our formalism of $\cE$-rings simultaneously generalising difference and differential rings.  With these algebraic preliminaries in place, we define prolongation spaces of algebraic varieties over $\cE$-rings and study the geometric properties of prolongation spaces. We then switch gears to study the construction of algebraic jet spaces, which, for us, are not the same as the jet spaces considered in differential geometry.  The jet spaces of differential geometry are essentially our arc spaces, while our jet spaces are the linear spaces associated to the sheaves of higher order differential operators.  Finally, we introduce a functorial map comparing the jet space of a prolongation space with the prolongation space of a jet space and then study this interpolation map.  We conclude this article by showing that over smooth points, this interpolation map is surjective.

In the sequel, we shall apply the geometric theory developed here to build a general theory of $\cE$-algebraic geometry generalising Kolchin's theory of differential algebra and Cohn's theory of difference algebra, and in analogy with Buium's theories of arithmetic differential algebraic geometry.   In particular, the final surjectivity theorem of the present article will be used to show that for so-called separable $\cE$-varieties, $\cE$-jet spaces determine the $\cE$-varieties, thus generalising the principal results of Pillay and Ziegler~\cite{pillayziegler03} on finite-rank difference and differential varieties.

We would like to thank Sergei Starchenko for reading and commenting upon an early draft of this paper.

\smallskip

{\bf Some conventions:}
All our rings are commutative and unitary and all our ring homomorphisms preserve the identity.
All our schemes are separated.
If $X$ is a scheme over a ring $A$ and we wish to emphasise the parameters we may write $X_A$ for $X$.
Similarly, for $R$ an $A$-algebra, we may write $X_R:=X\times_AR$.
By $X(R)$ we mean the set of $R$-points of $X$ over $A$.
Note that if $k$ is an $A$-algebra and $R$ a $k$-algebra, then there is a canonical identification of $X_k(R)$ with $X(R)$.

\bigskip

\section{Weil restriction of scalars}

\noindent
Weil restriction of scalars is a notion that will be of fundamental importance in this paper.
We review some of the basic facts of this construction.
The material covered here, as well as further details, can be found in section~7.6 of~\cite{neron} and in the appendix to~\cite{oesterle84}.

Let $S$ be a scheme and $T\to S$ a scheme over $S$.
Given a scheme $Y$ over $T$, let $\mathcal{R}_{T/S}Y:\operatorname{Scheme}/S\to\operatorname{Sets}$ be the functor which assigns to any scheme $U$ over $S$ the set $\hom_T(U\times_ST,Y)$.

\begin{proposition}[cf. Theorem~4 of~\cite{neron}]
\label{weil}
Suppose $k$ is a ring, $R$ is a $k$-algebra that is finite and free as a $k$-module, $S=\spec(k)$, and $T=\spec(R)$.
If the induced morphism $T\to S$ is one-to-one then there exists a covariant functor
$$\operatorname{R}_{T/S}:\operatorname{Schemes}/T\to\operatorname{Schemes}/S$$
such that for any scheme $Y$ over $T$, $\r_{T/S}Y$ represents the functor $\mathcal{R}_{T/S}Y$.
The equivalence of the functors $\hom_S(\_,\r_{T/S}Y)$ and $\mathcal{R}_{T/S}Y$ is given by the existence of a $T$-morphism $r_Y:\r_{T/S}Y\times_ST\to Y$ such that for any scheme $U$ over $S$, $p\mapsto r_Y\circ(p\times_S\id_T)$ defines a bijection between $\hom_S(U,\r_{T/S}Y)$ and $\mathcal{R}_{T/S}Y(U)$.

We can drop the assumption that $T\to S$ is one-to-one if we restrict $\operatorname{R}_{T/S}$ to the category of schemes over $R$ that have the property that every finite set of (topological) points is contained in an open affine subscheme.
\end{proposition}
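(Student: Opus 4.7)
The plan is to construct $\r_{T/S}Y$ first when $Y$ is affine by choosing a basis of $R$ over $k$ and writing everything in coordinates, to verify the universal property directly at the level of rings in that case, and then to globalize by covering an arbitrary $Y$ with affines and gluing. The one-to-one hypothesis on $T \to S$ will only enter in the gluing step, where it guarantees that open immersions in $Y$ Weil-restrict to open immersions.

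For the affine case, fix a $k$-basis $e_1,\ldots,e_n$ of $R$ with structure constants $e_i e_j = \sum_\ell c_{ij}^\ell e_\ell$, and write $Y=\spec(B)$ with $B = R[\{y_\alpha\}]/I$. Introduce new indeterminates $z_{\alpha,i}$ over $k$ for $1 \le i \le n$, and for each $f \in I$ expand
$$f\Big(\sum_i z_{\alpha,i}\,e_i\Big) \;=\; \sum_j g_{f,j}\big(\{z_{\alpha,i}\}\big)\,e_j$$
in $k[\{z_{\alpha,i}\}]\otimes_k R$, obtaining polynomials $g_{f,j} \in k[\{z_{\alpha,i}\}]$. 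Then define $\r_{T/S}Y := \spec\!\big(k[\{z_{\alpha,i}\}]/(g_{f,j})\big)$. A direct unpacking shows that giving a $k$-algebra map from this coordinate ring to a $k$-algebra $A$ is the same as assigning to each $y_\alpha$ an element of $A\otimes_k R$ subject to the relations in $I$, i.e.\ giving an $R$-algebra map $B\to A\otimes_k R$. Thus
$$\hom_S(\spec(A),\r_{T/S}Y) \;=\; \hom_R(B,\,A\otimes_k R) \;=\; \hom_T(\spec(A)\times_S T,\,Y),$$
which is representability. Evaluating this bijection at the identity map of $\r_{T/S}Y$ yields the universal $T$-morphism $r_Y:\r_{T/S}Y\times_S T\to Y$.

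For a general $Y$, I would cover $Y$ by affine opens $Y_\alpha$, apply the affine construction to each, and glue. The content of this step is to show that for every open immersion $V \hookrightarrow Y$ of $T$-schemes, the natural transformation $\mathcal{R}_{T/S}V \to \mathcal{R}_{T/S}Y$ is representable by an open immersion of functors on schemes over $S$. Concretely, given a map $U \to \r_{T/S}Y$ corresponding to a $T$-morphism $f:U\times_S T\to Y$, one must verify that
$$W \;:=\; \{\,u \in U \,:\, \text{the fibre of } U\times_S T \to U \text{ over } u \text{ lands in } V\,\}$$
is open in $U$ and represents the fibre product $U\times_{\r_{T/S}Y}\r_{T/S}V$. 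Openness is where the one-to-one assumption is used: because $\spec(R)\to\spec(k)$ is topologically injective, the fibre of $U\times_S T\to U$ over any $u$ is concentrated over a single point of $T$, so that $U \setminus W$ is the image of the closed set $(U\times_S T)\setminus f^{-1}(V)$ under the finite (hence closed) projection $U\times_S T\to U$.

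The main obstacle is this gluing step: the affine construction is essentially bookkeeping, but showing that the $\r_{T/S}Y_\alpha$ actually glue into a single scheme requires the open-subfunctor statement above, together with a check that the universal maps $r_{Y_\alpha}$ agree on overlaps. The reason the hypothesis on $T\to S$ can be traded for the assumption that every finite set of points of $Y$ lies in an affine open is that, without one-to-oneness, the fibre of $U\times_S T\to U$ over a point may hit several points of $T$, and one needs a single affine $Y_\alpha$ containing the image of such a finite fibre in order to still witness membership in $W$ via an affine piece.
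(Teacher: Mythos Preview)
Your approach is essentially the same as the paper's: construct the Weil restriction explicitly for affine $Y$ by choosing a basis and expanding in coordinates, verify the universal property directly at the level of rings, and then glue. The paper in fact defers the gluing step entirely to the argument in Theorem~4 of \cite{neron}, whereas you sketch it in more detail.

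One imprecision worth flagging: you locate the use of the one-to-one hypothesis in the openness of $W$, but your own argument for openness works unconditionally. The projection $U\times_S T \to U$ is finite (hence closed) simply because $R$ is finite over $k$, so $U\setminus W$, being the image of the closed set $(U\times_S T)\setminus f^{-1}(V)$, is closed whether or not the fibres are singletons. The one-to-one hypothesis is actually needed for the \emph{covering} step: having chosen an affine open cover $\{Y_\alpha\}$ of $Y$, you must check that the corresponding opens $W_\alpha$ cover $U$. For a given $u\in U$, the fibre over $u$ maps to a finite set of points in $Y$ of cardinality bounded by that of the fibres of $T\to S$, and you need some single $Y_\alpha$ containing all of them in order that $u\in W_\alpha$. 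When $T\to S$ is one-to-one this image is a single point and the covering is automatic; otherwise one needs precisely the hypothesis that finite subsets of $Y$ lie in affine opens. Your final paragraph does describe this correctly, so the issue is only in the sentence attributing the hypothesis to openness rather than to covering.
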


\begin{proof}
The proposition is implicit in the proof of Theorem~4 of~\cite{neron}. For the sake of completeness we provide some details here.

Choosing a basis, let $R=\bigoplus_{j=1}^tk\cdot e_j$.
We first define the functor $\r_{T/S}$ on affine schemes over $T$.
Suppose $Y=\spec\big(R[{\bf y}]/I\big)$, where ${\bf y}$ is a (possibly infinite) tuple of indeterminates.
Let $\bar{\bf y}=({\bf y}_j)_{1\leq j\leq t}$ be $t$ copies of ${\bf y}$.
Then let $\r_{T/S}Y:=\spec\big(k[\bar {\bf y}]/I'\big)$ where $I'$ is obtained as follows:
Let $\rho:R[{\bf y}]\to k[\bar {\bf y}]\otimes_kR$ be the $R$-algebra map given by $\displaystyle \rho(y)=\sum_{j=1}^ty_j\otimes e_j$ for all $y\in{\bf y}$, and $\rho(r)=1\otimes r$ for all $r\in R$.
Given $P\in R[{\bf y}]$, $\displaystyle \rho(P)=\sum_{j=1}^tP_j\otimes e_j$
where $P_1,\dots,P_t\in k[\bar {\bf y}]$ are such that
\begin{eqnarray}
\label{i'}
P(\sum_{j=1}^t{\bf y}_je_j)
& = & P_1e_1+P_2e_2+\cdots+P_te_t
\end{eqnarray}
in $\bigoplus_{j=1}^t k[\bar {\bf y}]\cdot e_j$.
If for each $1\leq j\leq t$ we let
$\pi_j:k[\bar {\bf y}]\otimes_kR
\to k[\bar {\bf y}]$
 be the map which first identifies $k[\bar {\bf y}]\otimes_kR$ with $\bigoplus_{j=1}^t k[\bar {\bf y}]\cdot e_j$ and then projects onto the $e_j$ factor, then $I'$ is the ideal generated by $\displaystyle \sum_{j=1}^t\pi_j\rho(I)$.
That is, $I'$ is the ideal generated by the $P_j$'s in~(\ref{i'}) as $P$ ranges over all polynomials in $I$.

Note that $\rho$ induces an $R$-algebra map $r_Y^*:R[{\bf y}]/I\to k[\bar {\bf y}]/I'\otimes_kR$ which in turn induces an $R$-morphism $\spec\big(k[\bar {\bf y}]/I'\big)\times_kR\to\spec\big(R[{\bf y}]/I\big)$.
This is the $T$-morphism $r_Y:\r_{T/S}Y\times_ST\to Y$ whose existence is asserted in the Proposition.

We need to define $\r_{T/S}$ on morphisms.
Suppose $p:Y\to Z$ is an $R$-morphism, where $Z=\spec\big(R[{\bf z}]/J\big)$.
Then $\r_{T/S}(p):\spec\big(k[\bar {\bf y}]/I'\big)\to\spec\big(k[\bar {\bf z}]/J'\big)$ is the map induced by the $k$-algebra map $z_j\mapsto\pi_jr_Y^*(p^*z)$, where $p^*:R[{\bf z}]/J\to R[{\bf y}]/I$ is the $R$-map on co-ordinate rings associated to $p$ and $r_Y^*$ and $\pi_j$ are as in the preceeding paragraphs.
It is routine to check that $\r_{T/S}$ thus defined is indeed a functor from affine schemes over $T$ to affine schemes over $S$.

Next we show that $\r_{T/S}Y$ does indeed represent the functor $\mathcal{R}_{T/S}Y$ (still restricting to affine schemes).
Suppose $U=\spec(A)$ is an affine scheme over $k$.
We first show that $p\mapsto r_Y\circ(p\times_k\id_R)$ defines a bijection between $\hom_k(U,\r_{T/S}Y)$ and $\mathcal{R}_{T/S}Y(U)=\hom_R(U\times_kR,Y)$.
Working with the co-ordinate rings instead, we need to show that
$f\mapsto (f\times\id_R)\circ r_Y^*$ gives a bijection from $\hom_k(k[\bar {\bf y}]/I',A)$ to $\hom_R(R[{\bf y}]/I,A\otimes_kR)$.
For injectivity we just observe that if $(f\times\id_R)\circ r_Y^*=(g\times\id_R)\circ r_Y^*$ then $\displaystyle \sum_{j=1}^tf(y_j+I')\otimes e_j=\sum_{j=1}^tg(y_j+I')\otimes e_j$ for all $y\in{\bf y}$.
But since $A\otimes_kR=\bigoplus_{j=1}^tA\cdot e_j$, this implies that $f(y_j+I')=g(y_j+I')$ for all $y\in{\bf y}$ and all $1\leq j\leq t$.
So $f$ and $g$ agree on the generators of $k[\bar {\bf y}]/I'$ and hence are equal.
For surjectivity, suppose $\alpha\in\hom_R(R[{\bf y}]/I,A\otimes_kR)$.
For each $y\in{\bf y}$ write $\displaystyle \alpha(y+I)=\sum_{j=1}^ta_j\otimes e_j$, where $a_j\in A$.
Now define $f:k[\bar {\bf y}]/I'\to A$ by $f(y_j+I'):=a_j$.
Then $f\in \hom_k(k[\bar {\bf y}]/I',A)$ and we compute that
$$\big[(f\times\id_R)\circ r_Y^*\big] (y+I) =
(f\times\id_R)\big(\sum_{j=1}^t(y_j+I')\otimes e_j\big) =
\sum_{j=1}^ta_j\otimes e_j =
\alpha(y+I)$$
 for each $y\in{\bf y}$, as desired.

It is routine to check that the above bijection is functorial in $U$ and therefore does establish the desired equivalence of functors, at least restricted to affine schemes.

It remains therefore only to go from affine schemes to schemes in general.
In Theorem~4 of~\cite{neron} there is an argument going from the representability of $\mathcal{R}_{T/S}Y$ for affine schemes $Y$ to the representability of $\mathcal{R}_{T/S}Z$ where $Z$ has the property that every finite set of (topological) points is contained in some open affine subscheme.
As a matter of fact of the proof given there, one only has to worry about finite sets in $Z$ of cardinality bounded by the cardinality of the fibres of $T\to S$ (they work with the weaker assumption that $T\to S$ is finite and locally free).
Hence, in the case that $T\to S$ is one-to-one,
the extra assumption on $Z$ is unnecessary.

So $\r_{T/S}$ extends to a functor on all schemes over $T$.
Finally, from the fact that the morphism $r_Y$ defined above for affine schemes is functorial in $Y$, it is not hard to check that it extends to all schemes over $T$ with the desired property.
\end{proof}

\begin{definition}
\label{weilrestriction}
Suppose $k$ is a ring, $A$ is a $k$-algebra that is finite and free over $k$, and $Y$ is a scheme over $A$ such that either $\spec(A)\to\spec(k)$ is a homeomorphism or $Y$ has the property that every finite set of points is contained in an affine open subset.
Then the scheme $\r_{\spec(A)/\spec(k)}Y$ given by the above proposition is called the {\em Weil restriction} of $Y$ from $A$ to $k$ and will be denoted by $\r_{A/k}Y$.
\end{definition}

The following fact, which is stated in a seemingly weaker but in fact equivalent form in section 7.6 of~\cite{neron}, is a routine diagram chase.

\begin{fact}
\label{weilbc}
Weil restriction is compatible with base change.
That is, if $T\to S$ and $S'\to S$ are schemes over $S$, and $T':=T\times_S S'$, and $\mathcal{R}_{T/S}$. Then for any scheme $Y$ over $T$,
$$\r_{T/S}Y\times_SS'=\r_{T'/S'}(Y\times_TT')$$
whenever $\r_{T/S}Y$ and $\r_{T'/S'}(Y\times_TT')$ exist.
\end{fact}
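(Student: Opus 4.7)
The plan is to verify the equality of schemes over $S'$ by checking that the two sides represent the same functor on the category of schemes over $S'$ and invoking the Yoneda lemma. Both Weil restrictions are assumed to exist, so the representing schemes are determined up to unique isomorphism by their functors of points, and it suffices to exhibit a natural bijection on $U$-valued points for an arbitrary $S'$-scheme $U$.

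Fix an $S'$-scheme $U$, thought of also as an $S$-scheme via the composition $U\to S'\to S$. On the left-hand side, the universal property of the fibre product gives
\[\hom_{S'}(U,\r_{T/S}Y\times_S S') = \hom_S(U,\r_{T/S}Y),\]
since the structure morphism $U\to S'$ is already in place; then Proposition~\ref{weil} identifies this with $\hom_T(U\times_S T,Y)$. The next step is to use associativity of the fibre product: because $T'=T\times_S S' = S'\times_S T$, one has a canonical identification $U\times_S T = U\times_{S'}(S'\times_S T) = U\times_{S'}T'$. Finally, the universal property of the fibre product $Y\times_T T'$ (now taken over $T'$) gives a natural bijection
\[\hom_T(U\times_{S'}T',Y) = \hom_{T'}(U\times_{S'}T',Y\times_T T'),\]
and Proposition~\ref{weil} applied on the $S'$-side identifies the right-hand side with $\hom_{S'}(U,\r_{T'/S'}(Y\times_T T'))$.

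Chaining these bijections produces the desired identification $\hom_{S'}(U,\r_{T/S}Y\times_S S') \cong \hom_{S'}(U,\r_{T'/S'}(Y\times_T T'))$. What remains is to confirm that this is natural in $U$, but each link in the chain is given by a universal property or by a base change, so naturality is automatic. Yoneda then yields a canonical $S'$-isomorphism between the two Weil restrictions.

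There is no real obstacle here; the statement is, as advertised, a routine diagram chase once one has the representability result of Proposition~\ref{weil} in hand. The only point demanding a modicum of care is tracking the change of base scheme from $S$ to $S'$ when passing from $U\times_S T$ to $U\times_{S'}T'$, and keeping the morphisms $r_Y$ and $r_{Y\times_T T'}$ aligned so that the unique isomorphism one obtains is compatible with the canonical morphisms rather than merely with the underlying schemes; this is handled automatically by working throughout with the universal properties.
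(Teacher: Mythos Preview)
Your argument is correct and is precisely the routine diagram chase the paper alludes to: the paper does not spell out a proof, merely noting that the fact follows from a straightforward comparison of functors of points, which is exactly what you have done via Yoneda.
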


\begin{fact}
\label{weiletale}
If $T \to S$ is a scheme over $S$ and $f:X \to Y$ is a smooth (respectively \'etale) morphism of schemes over $T$, then $\r_{T/S}(f):\r_{T/S}X \to \r_{T/S}Y$ is also smooth (respectively \'etale) -- whenever $\r_{T/S}X$ and $\r_{T/S}Y$ exist.
\end{fact}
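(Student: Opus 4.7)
The plan is to verify the infinitesimal lifting criterion. Recall that an $S$-morphism $g\colon A\to B$ is smooth (resp.\ \'etale) if and only if it is locally of finite presentation and for every affine $S$-scheme $U$ and every closed subscheme $U_0\subset U$ defined by a nilpotent ideal, each commutative square
$$\xymatrix{
U_0 \ar[r]\ar[d] & A\ar[d]^g \\
U \ar[r] & B
}$$
admits a (resp.\ unique) diagonal lift $U\to A$. I would verify this criterion for $\r_{T/S}(f)$.

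The first step is to translate such a lifting problem through the Weil restriction adjunction. By the defining property of Proposition~\ref{weil}, a commutative square
$$\xymatrix{
U_0 \ar[r]\ar[d] & \r_{T/S}X \ar[d]^{\r_{T/S}(f)} \\
U \ar[r] & \r_{T/S}Y
}$$
corresponds bijectively (functorially in $U_0\to U$) to a commutative square of $T$-schemes
$$\xymatrix{
U_0\times_S T \ar[r]\ar[d] & X \ar[d]^f \\
U\times_S T \ar[r] & Y,
}$$
and under the same correspondence diagonal lifts $U\to\r_{T/S}X$ match diagonal lifts $U\times_S T\to X$. Thus the lifting problem for $\r_{T/S}(f)$ is equivalent to the lifting problem for $f$ after base change to $T$.

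The second step is to check that the base-changed closed immersion $U_0\times_S T\hookrightarrow U\times_S T$ is again defined by a nilpotent ideal: if the ideal $\I\subset\O_U$ defining $U_0$ satisfies $\I^n=0$, then its pullback $\I\cdot\O_{U\times_S T}$ also satisfies the same nilpotency. Moreover, since in the setting of Proposition~\ref{weil} the morphism $T\to S$ is affine (with $T=\spec R$, $S=\spec k$), $U\times_S T$ is affine whenever $U$ is, so we are exactly in the test situation for smoothness (resp.\ \'etaleness) of $f$. Hence the lift on the base-changed side exists (resp.\ exists uniquely), and transferring it back through the adjunction yields the desired lift for $\r_{T/S}(f)$.

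Finally, I would verify that $\r_{T/S}(f)$ is locally of finite presentation. This is immediate from the explicit construction in the proof of Proposition~\ref{weil}: if an affine piece $X\to Y$ is cut out by finitely many polynomial relations in finitely many variables, then the construction produces $\r_{T/S}X\to\r_{T/S}Y$ cut out by $t:=\operatorname{rank}_k R$ times as many polynomial relations in $t$ times as many variables, hence still of finite presentation. The one thing to check, beyond diagram bookkeeping, is that this construction globalises correctly; but this is handled by the same affine-to-general extension used in the proof of Proposition~\ref{weil}, together with the fact that smoothness and \'etaleness are local properties on source and target. The only mild obstacle is the careful tracking of the universal property — once the translation of squares is made precise, the content of the fact is that the functorial characterisations of smooth and \'etale are preserved by any right adjoint to a base change, which $\r_{T/S}(-)$ is.
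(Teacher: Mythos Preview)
Your proposal is correct and takes essentially the same approach as the paper: both verify the infinitesimal lifting criterion by translating the square along the Weil restriction adjunction $\hom_S(U,\r_{T/S}(-))\cong\hom_T(U\times_S T,-)$, solving the lifting problem for $f$ after base change to $T$, and transporting the solution back. The paper phrases the criterion with square-zero ideals and works explicitly with the canonical morphisms $r_X, r_Y$, while you phrase it with nilpotent ideals and emphasise the adjunction more abstractly; you are also slightly more careful in noting the finite-presentation hypothesis, which the paper leaves implicit.
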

\begin{proof}
This proof is essentially the same as the argument for part (h) of Proposition~5 in section~7.6 of~\cite{neron}.  
We spell out some of the details. 

We use the characterisation of smooth and \'etale maps given by Proposition~6 of section~2.2 of~\cite{neron}; namely that it suffices to show that for any affine scheme $Z\to\r_{T/S}(Y)$ and all closed subschemes $Z_0$ of $Z$ whose ideal sheaf is square zero, the canonical map
$\hom_{\r_{T/S}Y}\left(Z,\r_{T/S}X\right)\to\hom_{\r_{T/S}Y}\left(Z_0,\r_{T/S}X\right)$ is surjective (respectively bijective).
So given $a:Z_0\to \r_{T/S}X$ over $\r_{T/S}Y$ we want to lift it (uniquely) to $Z$.
Base changing up to $T$ we obtain
$$\xymatrix{
Z_0\times_ST\ar[dr]\ar[r]^{a\times\id\quad\quad} & \r_{T/S}X\times_ST\ar[d]^{\r_{T/S}(f)\times\id}\ar[r]^{\quad\quad r_X} & X\ar[d]^f\\
& \r_{T/S}Y\times_ST\ar[r]^{\quad \quad r_Y} & Y
}$$
Since $f$ is smooth $r_X\circ(a\times\id)$ lifts to a map $p:Z\times_ST\to X$ over $Y$.
Now, under the identification of $X(Z\times_ST)$ with $\r_{T/S}X(Z)$, $p$ corresponds to a map $\tilde{p}:Z\to\r_{T/S}X$  such that $p=r_X\circ(\tilde{p}\times\id)$.
So we get
$$\xymatrix{
Z\times_ST\ar[dr]\ar[r]^{\tilde{p}\times\id\quad\quad} & \r_{T/S}X\times_ST\ar[d]^{\r_{T/S}(f)\times\id}\ar[r]^{\quad\quad r_X} & X\ar[d]^f\\
& \r_{T/S}Y\times_ST\ar[r]^{\quad\quad r_Y} & Y
}$$
Since $p=r_X\circ(\tilde{p}\times\id)$ extends $r_X\circ(a\times\id)$, $\tilde{p}$ is our desired extension of $a$.
Moreover, if $\tilde{p}'$ were another lifting of $a$ to $Z$ then $r_x\circ(\tilde{p}'\times\id)$ would be another lifting of $r_X\circ(a\times\id)$.
Hence, if $f$ is \'etale then $\tilde{p}$ is the unique extension of $a$ to $Z$.
\end{proof}      
 
\begin{example}[Arc spaces]
\label{example-arc}
Let $X$ be a scheme over a ring $k$.
Letting $k^{(n)}:=k[\epsilon]/(\epsilon^{n+1})$, we define the {\em $n^\text{th}$ arc space $\cA_n(X)$ of $X$}, to be the Weil restriction of $X\times_kk^{(n)}$ from $k^{(n)}$ to $k$.
(Note that $\spec(k^{(n)})\to\spec(k)$ is a homeomorphism so that the Weil restriction does exist.)
In particular, $\cA_n(X)(k)$ is canonically identified with the $k^{(n)}$-points of $X\times_k k^{(n)}$, which in turn can be identified with $X(k^{(n)})$.
Note that $\cA_1(X)$ is the tangent bundle $TX$.

The quotient $k^{(n)}\to k$ induces a structure map $\cA_n(X)\to X$ as follows:
For any $k$-algebra $R$,
$\cA_n(X)(R)$ is identified with $(X\times_kk^{(n)})(R\otimes_kk^{(n)})$ which is in turn identified with $X(R\otimes_kk^{(n)})$.
Now $R\otimes_kk^{(n)}=R[\epsilon]/(\epsilon^{(n+1)})=:R^{(n)}$, and we have the natural quotient map $\rho_n^R:R^{(n)}\to R$ inducing $\underline{\rho_n^R}:\spec(R)\to \spec(R^{(n)})$.
Composition with $\underline{\rho_n^R}$ defines a map from the $X(R^{(n)})$ to $X(R)$.
That is, from $\cA_n(X)(R)$ to $X(R)$.

\end{example}

\bigskip

\section{$\mathcal{E}$-rings}
\label{sectering}

\begin{definition}
\label{ringscheme}
By the {\em standard ring scheme} $\mathbb{S}$ over $A$ we mean the ring scheme where $\mathbb{S}(R)=(R,+,\times,0,1)$, for all $A$-algebras $R$.
An {\em $\mathbb{S}$-algebra scheme} $\E$ over $A$ is a ring scheme together with a ring scheme morphism $s_\E:\mathbb{S}\to\E$ over $A$.
We view $\mathbb{S}$ as an $\mathbb{S}$-algebra via the identity $\id:\mathbb{S}\to\mathbb{S}$.
A {\em morphism} of $\mathbb{S}$-algebra schemes is then a morphism of ring schemes respecting the $\mathbb{S}$-algebra structure.
Similarly one can define {\em $\mathbb{S}$-module} schemes and morphisms.
By a {\em finite free} $\mathbb{S}$-algebra scheme we will mean, somewhat unnaturally, an $\mathbb{S}$-algebra scheme $\mathcal{E}$ together with an isomorphism of $\mathbb{S}$-module schemes $\psi_\E:\mathcal{E}\to\mathbb{S}^\ell$, for some $\ell\in\mathbb{N}$.
\end{definition}

The intention behind fixing the ismorphism $\psi_\E:\mathcal{E}\to\mathbb{S}^\ell$ is to give us a canonical way of presenting $\mathcal{E}(R)$ uniformly in all $A$-algebras $R$.
Indeed, we get $\E(R)=R[(e_i)_{i\leq\ell}]/I_R$, where the $e_i$ are indeterminates and $I_R$ is the ideal generated by expressions that describe how the monomials in the $e_i$'s can be written as $A$-linear combinations of the $e_i$'s.

\begin{remark}
\label{presentation}
It follows from the above discussion that for any $A$-algebra, $\alpha:A\to R$, we can canonically identify $\mathcal{E}(R)$ and $\mathcal{E}(A)\otimes_AR$, both as $R$-algebras and as $\E(A)$-algebras (where the $\E(A)$-algebra structure on $\E(R)$ is by $\E(\alpha):\E(A)\to\E(R)$).
The converse is also true: given any finite and free $A$-algebra $B$, by choosing a basis we can find a finite and free $\mathbb{S}$-algebra scheme $\E$ over $A$ such that $\E(R)=B\otimes_AR$ for any $A$-algebra $R$.
\end{remark}

\begin{definition}
\label{ering}
Suppose $\mathcal{E}$ is a finite free $\mathbb{S}$-algebra scheme over a ring $A$.
An {\em $\mathcal{E}$-ring} is an $A$-algebra $k$ together with an $A$-algebra homomorphism $e:k\to\mathcal{E}(k)$.
\end{definition}

One may equally well describe an $\E$-ring by giving a collection of operators 
$\{ \partial_i:k \to k \}_{i \leq \ell}$ via the correspondence 
$\psi_{\E} \circ e = (\partial_1,\ldots,\partial_\ell)$.  That the 
collection $\{ \partial_i \}$ so defines an $\E$-ring structure on $k$ is equivalent to 
the satisfaction of a certain system of functional equations.

We now discuss some examples.

\begin{example}[Pure rings]
\label{pure}
For any $A$-algebra $k$, $(k,\id)$ is an $\mathbb{S}$-ring.
\end{example}

\begin{example}[Rings with endomorphisms]
\label{difference}
Fix $n\geq 0$ and consider the finite free $\mathbb{S}$-algebra scheme $\E_n=\mathbb{S}^n$ with the product ring scheme structure and $s:\mathbb{S}\to\mathbb{S}^n$ being the diagonal.
If $k$ is a ring and  $\sigma_0,\dots,\sigma_{n-1}$ are endomorphisms of $k$, then $(k,e_n)$ is an $\E_n$-ring where $e_n:=(\sigma_0,\sigma_1,\dots,\sigma_{n-1})$.
In particular, if $\sigma$ is an automorphism of $k$, then setting $\sigma_0=\id$, and setting $\sigma_{2m-1}=\sigma^m$ and $\sigma_{2m}=\sigma^{-m}$ for all $m>0$, we see that the difference ring structure is captured by $\big\{(k,e_n):n\in\mathbb{N}\big\}$.
\end{example}

\begin{example}[Hasse-differential rings]
\label{differential}
For each $n\geq 0$, consider the finite free $\mathbb{S}$-algebra scheme $\E_n$ where for any ring $R$
\begin{itemize}
\item
$\E_n(R)=R[\eta_1,\dots,\eta_r]/(\eta_1,\dots,\eta_r)^{n+1}$, where $\eta_1,\dots,\eta_r$ are indeterminates;
\item
$s^R:R\to\E_n(R)$ is the natural inclusion; and,
\item
$\psi^R: \E_n(R)\to R^{\ell_n}$ is the identification via the standard monomial basis of $R[\eta_1,\dots,\eta_r]/(\eta_1,\dots,\eta_r)^{n+1}$ over $R$;
\end{itemize}
We leave it to the reader to write down the equations which verify that such a finite free $\mathbb{S}$-algebra scheme exists.

Recall that a {\em Hasse derivation} on a ring $k$ is a sequence of additive maps from $k$ to $k$, ${\bf D}=(D_0,D_1\dots)$, such that
\begin{itemize}
\item
$D_0=\id$ and
\item
$\displaystyle D_m(xy)=\sum_{a+b=m}D_a(x)D_b(y)$ for all $m>0$.
\end{itemize}
Suppose ${\bf D}_1,\dots,{\bf D}_r$ is a {\em sequence} of $r$ Hasse derivation on $k$ and set $\displaystyle E(x)=\sum_{\alpha\in\mathbb{N}^r} D_{1,\alpha_1}D_{2,\alpha_2}\cdots D_{r,\alpha_r}(x)\eta^{\alpha}$.
Then $E:k\to k[[\eta_1,\dots,\eta_r]]$ is a ring homomorphism.
Let $e_n$ be the composition of $E$ with the quotient $k[[\eta_1,\dots,\eta_r]]\to k[\eta_1,\dots,\eta_r]/(\eta_1,\dots,\eta_r)^{n+1}$.
Then $(k,e_n)$ is an $\E_n$-ring.
The Hasse-differential ring structure is captured by the sequence $\big\{(k,e_n):n\in\mathbb{N}\big\}$.

This example specialises to the case of {\em partial differential fields in characteristic zero}.
Suppose $k$ a field of characteristic zero and $\partial_1,\dots,\partial_r$ are derivations on $k$.
Then $\displaystyle D_{i,n}:=\frac{\partial_i^n}{n !}$, for $1\leq i\leq r$ and $n\geq 0$, defines a sequence of Hasse derivations on $k$.
The $\E_n$-ring structure on $k$ is given in multi-index notation by $\displaystyle e_n(x):=\sum_{\alpha\in\mathbb{N}^r, |\alpha|\leq n}\frac{1}{\alpha!}\partial^\alpha(x)\eta^\alpha$ where $\partial:=(\partial_1,\dots,\partial_r)$.

On the other hand we can specialise in a different direction to deal with {\em fields of finite imperfection degree}.
The following example is informed by~\cite{ziegler03}:
suppose $k$ is a field of characteristic $p>0$ with imperfection degree $r$.
Let $t_1,\dots, t_r$ be a $p$-basis for $k$.
Then $t_1,\dots, t_r$ are algebraically independent over $\mathbb{F}_p$.
Consider $\mathbb{F}_p[t_1,\dots,t_r]$ and for $1\leq i\leq r$ and $n\in\mathbb{N}$, define
$${\bf D}_{i,n}(t_1^{\alpha_1}\cdots t_r^{\alpha_r}):=\left(\begin{array}{c}\alpha_i\\n\end{array}\right)t_1^{\alpha_1}\cdots t_i^{\alpha_i-n}\cdots t_r^{\alpha_r}.$$
and extend by linearity to $\mathbb{F}_p[t_1,\dots,t_r]$.
The sequence $({\bf D}_1,\dots,{\bf D}_r)$ forms a sequence of $r$ Hasse derivations on this domain.
Moreover, they extend uniquely to Hasse derivations on $k$ (see Lemma~2.3 of~\cite{ziegler03}).
This gives rise to a $\E_n$-ring structure on $k$ for all $n$.
\end{example}

The above examples can also be combined to treat difference-differential rings.

Next we consider an example interpolating between differential and difference rings.
The notion of {\em $D$-ring} was introduced by the second author in~\cite{scanlon2000}.

\begin{example}[$D$-rings]
Let $A$ be a commutative ring having a distinguished element $c \in A$.  For any $A$-algebra $R$ we define ${\mathcal E}_c(R)$ to be the $A$-module $R \times R$ with multiplication defined by $(x_1,x_2) \cdot (y_1,y_2) := (x_1 y_1, x_1 y_2 + x_2 y_1 + c x_2 y_2)$. 
To give $R$ the structure of a ${\mathcal E}_c$-ring we need only produce an $A$-linear map $D:R \to R$ for which $R \to {\mathcal E}_c(R)$ given by $x \mapsto (x,D(x))$ is a homomorphism of $A$-algebras.
Note that if $c = 0$, then $\E_c(R)$ is the ring of dual numbers over $R$ and $D$ is a derivation.
At the other extreme, if $c \in A^\times$, then from any $A$-algebra endomorphism $\tau:R \to R$, we give $R$ a $\E_c$-ring structure via $D(x) := c^{-1}[\tau(x) - x]$.
Considered at the level of the ring schemes, we see that $(\E_c)_{A[\frac{1}{c}]} \approx {\mathbb S}_{A[\frac{1}{c}]} \times {\mathbb S}_{A[\frac{1}{c}]}$ while $(\E_c)_{A/(c)} \approx ({\mathbb S}[\epsilon]/(\epsilon^2))_{A/(c)}$.
In particular, when $A$ is a field, an $\E_c$-ring is essentially either a difference ring or a differential ring.
\end{example}

Finally let us discuss an example of a ring functor which does not fit into our formalism, but for which some of our constructions still make sense.

\begin{example}[$\lambda$-rings]
Fix $A$ a commutative ring of characteristic $p$ with a finite $p$-independent set $B$.
For each natural number $n$ and $A$-algebra $R$, define $\E_n(R) := R[\{ s_b : b \in B \} ]/( \{ s_b^{p^n} - b : b \in B \}]$ and take for its basis over $A$ the monomials in $\{ s_b : b \in B \}$ in which each variable appears to degree less than $p^n$.
As it stands, an $\E_n$-ring $(A,\psi)$ is simply a Hasse differential ring for which certain linear differential operators must vanish identically.
However, we might consider a variation on the definition of an $\E$-ring:
Letting $P_n:\E_n(R) \to R$ be given by $x \mapsto x^{p^n}$, we might ask for a map $\lambda:R \to \E_n(R)$ for which the composite $P_n \circ \lambda$ is the identity on $R$.
(Note that $\lambda$ is then not $A$-linear, and hence does not give an $\E_n$-ring structure in our sense.)
With this construction we recover the formalism of $\lambda$-functions used in the study of the theory of separably closed fields.
\end{example}

As the above examples suggest, one is usually interested in a whole sequence of $\E_n$-ring structures on $k$ that satisfy certain compatibility conditions.
A more systematic study of such systems, {\em generalised Hasse systems}, with their attendant geometry, will be carried out in the sequel to this paper.

We conclude this section with some notation.

\begin{notation}
\label{2oner}
Suppose $(k,e)$ is an $\mathcal{E}$-ring.
\begin{itemize}
\item[(a)]
Note that $\E(k)$ has two $k$-algebra structures, the {\em standard} $s_\E^k:k\to \E(k)$ and the {\em exponential} $e:k\to\E(k)$.
Both induce the same $A$-algebra structure.
In order to distinguish these notationally, we will denote the latter by $\E^e(k)$.
\item[(b)]
The ring homomorphism $e:k\to\E(k)$ also induces a second $k$-algebra structure on $\E(R)$, for any $k$-algebra $R$.
Namely, given $a:k\to R$ we obtain
$$\xymatrix{
k\ar[rr]^e &&\E(k)\ar[rr]^{\E(a)} && \E(R)
}$$
We denote this $k$-algebra by $\E^e(R)$.
Alternatively, if we identify $\E(R)$ with $R\otimes_k\E(k)$ as in Remark~\ref{presentation}, then $\E^e(R)$ is described by
$$\xymatrix{
k\ar[rr]^e &&\E(k)\ar[rr] && R\otimes_k\E(k)
}$$
Note that in general $\E^e(R)\neq R\otimes_k \E^e(k)$ as $k$-algebras.
\end{itemize}
\end{notation}

\bigskip
\section{Abstract Prolongations}
\label{sectprolong}
\noindent
We fix a finite free $\mathbb{S}$-algebra scheme $\mathcal{E}$ over a ring $A$ and an $\mathcal{E}$-ring $(k,e)$.
The following definition is partly informed by, and generalises, a construction of Buium's in the case of an ordinary differential ring (cf. $9.1$ of~\cite{buium92}).\footnote{Buium calls his construction a differential ``jet space'', which conflicts badly with our terminology in several ways.}

\begin{definition}[Prolongation]
\label{prolongation}
Suppose $X$ is a scheme over $k$.
The {\em prolongation space of $X$ with respect to $\E$ and $e$}, denoted by $\tau(X,\E,e)$, is the Weil restriction of $X\times_k\E^e(k)$ from $\mathcal{E}(k)$ to $k$, when it exists.
Note that we are taking the base extension with respect to the exponential $e:k\to\mathcal{E}(k)$, while we are taking the Weil restriction with respect to the standard $s_\E^k:k\to \E(k)$.
When the context is clear we may write $\tau(X)$ for $\tau(X,\E,e)$.
\end{definition}

So for any $k$-algebra $R$, using the fact (Remark~\ref{presentation}) that $\E(R)=\E(k)\otimes_kR$, we have that
$\displaystyle \tau(X)(R)=\big(X\times_k\E^e(k)\big)\big(\E(R)\big)$.

Note that if $X$ is quasi-projective then the prolongation space necessarily exists regardless of the $\E$-ring -- this is because every finite set of points in a quasi-projective scheme is contained in an affine open subset, and that is the condition for the existence of the Weil restriction.
On the other hand, for particular $\E$-rings, the prolongation spaces may exist for other schemes -- for example if $\underline{s^k}:\spec\big(\E(k)\big)\to \spec(k)$ is one-to-one then $\tau(X,\E,e)$ exists for all schemes $X$ over $k$ (cf. Proposition~\ref{weil}).
{\em In the rest of this paper we implicitly assume that our schemes and $\E$-rings are such that the the relevant prolongation spaces exist.}
If the reader is uncomfortable with this sleight of hand, he/she is welcome to assume that all our schemes are quasi-projective.

\begin{example}
\begin{itemize}
\label{prolongexamples}
\item[(a)]
If $\E=\mathbb{S}$ and $e=s_\E^k=\id$, then $\tau(X)=X$.

\item[(b)]
Arc spaces are prolongations.
If $\E$ is the $\mathbb{S}$-algebra scheme given by $\E(R)=R[\epsilon]/(\epsilon^{n+1})$ and $e=s_\E^k$ is the standard $k$-algebra structure on $k[\epsilon]/(\epsilon^{n+1})$, then $\tau(X)=\cA_n(X)$ of Example~\ref{example-arc}.

\item[(c)]
Difference rings.
If $\E_n(k)=k^n$ is as in Example~\ref{difference}, and $e_n=(\sigma_0,\dots,\sigma_{n-1})$ is a sequence of endomorphisms of $k$, then $\tau(X,\E_n,e_n)= X^{\sigma_0}\times\cdots\times X^{\sigma_{n-1}}$, where $X^{\sigma_i}=X\times_{\sigma^i}k$.
Indeed, for any scheme $U$ over $k$, a $k^n$-morphism from $U\times_sk^n$ to $X\times_{e_n}k^n$ determines and is determined by a sequence of $k$-morphisms from $U$ to $X^{\sigma_i}$, $i=0,\dots,{n-1}$.

\item[(d)]
Differential rings.
Suppose $\E$ is the $\mathbb{S}$-algebra scheme given by $\E(R)=R[\eta]/(\eta^{2})$ with the standard $k$-algebra structure.
Suppose $k$ is a field of characteristic zero and $\delta$ is a derivation on $k$ and $\displaystyle e(a)=a +\delta(a)\eta$.
If $X$ is the affine scheme $\spec\big(k[x_1,\dots x_m]/\langle P_1,\dots,P_t\rangle\big)$, then the Weil restriction computations shows that $\tau(X,\E,e)$ is the affine subscheme of $\mathbb{A}^{2m}$ whose ideal is generated by $P_1(x),\dots,P_t(x)$ together with
$$\sum_{i=1}^m\frac{\partial}{\partial x_i}P_j(x)\cdot y_i \ +P_j^{\delta}(x)$$
for $j=1,\dots,t$, where $P^{\delta}$ is obtained from $P$ by applying $\delta$ to the coefficients.
So if $X$ is over the constants of $\delta$ then this prolongation space is the tangent bundle.
\end{itemize}
\end{example}

\begin{definition}
\label{canonicalmorph}
By the {\em canonical morphism associated to $\tau(X,\E,e)$}, denoted
$$r^X_{\E,e}:\tau(X)\times_k\E(k)\to X,$$
we mean the composition of the $\E(k)$-morphism
$\tau(X)\times_k\E(k)\to X\times_k\E^e(k)$ given by the representability of the Weil restriction (cf. Proposition~\ref{weil}) and the projection $X\times_k\E^e(k)\to X$.
\end{definition}

\begin{remark}
\label{rnotoverk}
The canonical morphism $r^X_{\E,e}:\tau(X)\times_k\E(k)\to X$ is {\em not} a $k$-morphism if we view $\tau(X)\times_k\E(k)$ as over $k$ in the usual way.
However we do have:
$$\xymatrix{
\tau(X)\times_k\E(k)\ar[dr]\ar[rr]^{r^X_{\E,e}}&&X\ar[dd]\\
&\spec\big(\E(k)\big)\ar[dr]_{\underline{e}} &\\
&&\spec(k)
}$$
where $\underline{e}$ is the morphism of schemes induced by $e:k\to\E(k)$.
\end{remark}

As mentioned earlier, the prolongation space is characterised by the property that, for any $k$-algebra $R$, $\tau(X)(R)=\big(X\times_k\E^e(k)\big)\big(\E(R)\big)$.
However, the following lemma gives another useful description of the $R$-points of the prolongation.

\begin{lemma}
\label{taunpoints}
For any $k$-algebra $R$, $\tau(X)(R)=X\big(\E^e(R)\big)$.
More precisely, the $R$-points of $\tau(X)$ over $k$ can be functorially identified with the $\E^e(R)$-points of $X$ over $k$.
This identification is given by $p\mapsto r^X_{\E,e}\circ\big(p\times_k\E(k)\big)$.
\end{lemma}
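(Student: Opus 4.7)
\smallskip
\noindent\emph{Plan of proof.} The plan is to unwind the definitions carefully, the one subtlety being to keep track of which $k$-algebra structure on $\E(k)$ and $\E(R)$ is in play at each stage. By Definition~\ref{prolongation}, $\tau(X) = \r_{\E(k)/k}\bigl(X \times_k \E^e(k)\bigr)$, where the Weil restriction uses the \emph{standard} structure $s^k_\E\colon k \to \E(k)$. Applying Proposition~\ref{weil} with $U = \spec(R)$, $S = \spec(k)$, $T = \spec(\E(k))$ and $Y = X \times_k \E^e(k)$ yields
$$\tau(X)(R) \;=\; \hom_k\bigl(\spec(R),\,\tau(X)\bigr) \;\cong\; \hom_{\E(k)}\bigl(\spec(R)\times_k\spec(\E(k)),\,X\times_k\E^e(k)\bigr),$$
with the bijection implemented by $p \mapsto r_Y \circ (p \times_k \id_{\E(k)})$. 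By Remark~\ref{presentation} the source identifies canonically with $\spec(\E(R))$, regarded as an $\E(k)$-scheme via the standard map $\E(k)\to\E(R)$.

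The core step is to reinterpret this $\hom_{\E(k)}$ as $k$-morphisms into $X$. Since $X\times_k\E^e(k)$ is the fibre product over $\spec(k)$, the universal property gives that an $\E(k)$-morphism from $\spec(\E(R))$ (with its standard $\E(k)$-structure) into $X\times_k\E^e(k)$ is the same datum as a $k$-morphism $\spec(\E(R)) \to X$, \emph{where the $k$-structure on $\spec(\E(R))$ is inherited by composing the given $\E(k)$-structure with $\underline{e}\colon\spec(\E(k))\to\spec(k)$.} Algebraically, this means the $k$-algebra structure on $\E(R)$ is the composite $k \xrightarrow{e}\E(k) \to \E(R)$, which is precisely the definition of $\E^e(R)$ in Notation~\ref{2oner}(b). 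Hence
$$\hom_{\E(k)}\bigl(\spec(\E(R)),\,X\times_k\E^e(k)\bigr) \;=\; \hom_k\bigl(\spec(\E^e(R)),\,X\bigr) \;=\; X\bigl(\E^e(R)\bigr),$$
yielding the asserted bijection.

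Finally, to see that the identification is given by $p\mapsto r^X_{\E,e}\circ(p\times_k\E(k))$, we chase an element through: the Proposition~\ref{weil} bijection sends $p$ to $r_Y\circ(p\times_k\id_{\E(k)})$, and the universal property reduction above simply composes with the projection $X\times_k\E^e(k)\to X$. But by Definition~\ref{canonicalmorph}, $r^X_{\E,e}$ is exactly this composition, so the formula falls out. Functoriality in $R$ is inherited from the functoriality of the Weil restriction bijection in Proposition~\ref{weil}. The main potential pitfall is purely bookkeeping---confusing the standard and exponential $k$-algebra structures when identifying $R\otimes_k\E(k)$ with $\E(R)$ versus $\E^e(R)$---but once one is explicit about which structure map factors through $\underline{e}$, the argument is immediate.
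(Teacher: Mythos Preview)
Your proof is correct and follows essentially the same approach as the paper's: apply the Weil restriction representability to identify $\tau(X)(R)$ with $\E(k)$-morphisms $\spec(R\otimes_k\E(k))\to X\times_k\E^e(k)$, then use the universal property of the fibre product to reduce to $k$-morphisms into $X$ while carefully tracking that the relevant $k$-structure on $R\otimes_k\E(k)$ is the one factoring through $e$, i.e.\ $\E^e(R)$. The paper packages the bookkeeping in a commuting diagram where you argue verbally, but the content is identical.
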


\begin{proof}
First of all, the defining property of the Weil restriction implies that
$$\tau(X)(R)=X\times_k\E^e(k)\big(R\otimes_k\E(k)\big)$$
where the identification is obtained by associating to
$$\xymatrix{\spec(R)\ar[r]^p&\tau(X)}$$
the $\E(k)$-morphism
$$\xymatrix{\spec\big(R\otimes_k\E(k)\big)\ar[rr]^{p\times_k\E(k)}&&\tau(X)\times_k\E(k)\ar[rr]&&X\times_k\E^e(k)}$$
where $\tau(X)\times_k\E(k)\to X\times_k\E^e(k)$ is given by the representability of the Weil restriction (cf. Proposition~\ref{weil}).
On the other hand,
$$X\times_k\E^e(k)\big(R\otimes_k\E(k)\big)=X\big(\E^e(R)\big)$$
Indeed, given
$$\xymatrix{\spec\big(R\otimes_k\E(k)\big)\ar[rr]^q&&X\times_k\E^e(k)}$$
consider the following diagram
$$\xymatrix{
& \spec\big(R\otimes_k\E(k)\big)\ar[dl]\ar[dr]\ar[rr]^{q} && X\times_k\E^e(k)\ar[dr]\ar[dl]\\
\spec(R)\ar[dr] & & \spec\big(\E(k)\big)\ar[dr]_{\underline{e}}\ar[dl]^{\underline{s_\E^k}} && X\ar[dl]\\
& \spec(k)&&\spec(k)
}$$
where $\underline{e}$ and $\underline{s_\E^k}$ are the morphisms on schemes induiced by $e$ and $s_\E^k$, respectively.
We see that composing $q$ with the projection $X\times_k\E^e(k)\to X$ gives us the natural identification of the $\big(R\otimes_k\E(k)\big)$-points of $X\times_k\E^e(k)$ over $\E(k)$ with the $\big(R\otimes_k\E(k)\big)$-points of $X$ over $k$ {\em where $R\otimes_k\E(k)$ is viewed as a $k$-algebra by}
$$\xymatrix{k\ar[r]^e&\E(k)\ar[r]&R\otimes_k\E(k)}$$
But $\E^e(R)$ is canonically isomorphic to $R\times_k\E(k)$ with the above $k$-algebra structure (cf. ~\ref{2oner}).
Hence $\tau(X)(R)=X\big(\E^e(R)\big)$, as desired.
\end{proof}

The prolongation space construction is a covariant functor:
If $f:X\to Y$ is a morphism of schemes over $k$ then $\tau(f)=\r_{\E(k)/k}\big(f\times_k\E^e(k)\big)$ is the morphism given by the Weil restriction functor applied to $f\times_k\E^e(k):X\times_k\E^e(k)\to Y\times_k\E^e(k)$.
Alternatively, $\tau(f)$ can be described on $R$-points for any $k$-algebra $R$, after identifying $\tau(X)(R)$ with $X\big(\E^e(R)\big)$ and $\tau(Y)(R)$ with $Y\big(\E^e(R)\big)$, as composition with $f$.

\begin{proposition}
\label{etaletau}
If $f:X \to Y$ is an \'etale morphism (respectively closed embedding, smooth morphism), then $\tau(f):\tau(X)\to\tau(Y)$ is \'etale (respectively a closed embedding, smooth).
In particular, if $X$ is a smooth then so is $\tau(X)$.
\end{proposition}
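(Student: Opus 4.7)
The plan is to exploit the factorisation $\tau(f) = \mathrm{R}_{\mathcal{E}(k)/k}\bigl(f \times_k \mathcal{E}^e(k)\bigr)$, so that $\tau(f)$ is obtained from $f$ by two operations: (i) base change along $s_\E^k\circ e:k\to \mathcal{E}^e(k)$, producing a morphism of $\mathcal{E}(k)$-schemes, and (ii) Weil restriction from $\mathcal{E}(k)$ down to $k$. Each property under consideration will be verified separately by checking that it is preserved by both operations.

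For the smooth and étale cases the plan is immediate: stability of smoothness and étaleness under base change handles (i), and Fact~\ref{weiletale} handles (ii). So $\tau(f) = \mathrm{R}_{\mathcal{E}(k)/k}\bigl(f\times_k \mathcal{E}^e(k)\bigr)$ inherits the desired property directly.

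The only genuinely new input is the closed-embedding case, which is not covered by Fact~\ref{weiletale}. Since closed immersions are stable under base change, (i) is immediate. For (ii), I would check that Weil restriction preserves closed immersions using the explicit affine construction in the proof of Proposition~\ref{weil}. The question is local on the base $S=\mathrm{Spec}(k)$, so I reduce to the affine situation where $Y=\mathrm{Spec}\bigl(R[\mathbf{y}]/I\bigr)$ and $X=\mathrm{Spec}\bigl(R[\mathbf{y}]/J\bigr)$ with $I\subseteq J$. Under the construction of Proposition~\ref{weil}, the ideal $J'\subseteq k[\bar{\mathbf{y}}]$ is generated by the components $P_j$ of polynomials $P\in J$, and similarly for $I'\subseteq J'$. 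Hence $\mathrm{R}_{T/S}(X) = \mathrm{Spec}(k[\bar{\mathbf{y}}]/J')$ is cut out of $\mathrm{R}_{T/S}(Y) = \mathrm{Spec}(k[\bar{\mathbf{y}}]/I')$ by the image of $J/I$, yielding a closed immersion. Passing from the affine-local check to the full statement is routine once one observes, using the one-to-one case of Proposition~\ref{weil} applied to $\mathrm{Spec}(\mathcal{E}(k))\to\mathrm{Spec}(k)$, that Weil restriction commutes with restriction to open subschemes coming from the target. This last gluing point is the only real obstacle, and it is handled by covering $Y$ by affine opens $V$ containing the images of the topological fibres over each point of interest and observing $\mathrm{R}_{T/S}(f^{-1}V)\hookrightarrow \mathrm{R}_{T/S}(V)$ is closed by the affine computation.

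For the final claim that $\tau(X)$ is smooth whenever $X$ is, I would apply the smooth case to the structural morphism $X \to \mathrm{Spec}(k)$, after noting that $\tau(\mathrm{Spec}(k))=\mathrm{Spec}(k)$. This last identification is immediate from Lemma~\ref{taunpoints}: for any $k$-algebra $R$, $\tau(\mathrm{Spec}(k))(R)=\mathrm{Spec}(k)\bigl(\mathcal{E}^e(R)\bigr)$ is a singleton, so $\tau(\mathrm{Spec}(k))$ represents the same functor as $\mathrm{Spec}(k)$.
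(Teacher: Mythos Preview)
Your proposal is correct and follows essentially the same approach as the paper: factor $\tau(f)$ as base change followed by Weil restriction, invoke Fact~\ref{weiletale} for the smooth and \'etale cases, and appeal to the explicit affine construction of Proposition~\ref{weil} for closed embeddings. You simply spell out in more detail the closed-embedding computation and the identification $\tau(\spec(k))=\spec(k)$, both of which the paper leaves implicit.
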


\begin{proof}
Weil restrictions preserve smooth morphisms, \'etale morphisms (Fact~\ref{weiletale}) and closed embeddings (the latter is clear from the construction, see the proof of Proposition~\ref{weil}).
This is also true of base change.
It therefore follows that the prolongation functor preserves all these properties.
Since $X$ being smooth is equivalent $X\to\spec(k)$ being smooth, it follows that $\tau(X)$ is smooth if $X$ is.
\end{proof}

There is a natural map $\nabla=\nabla^X_{\E,e}:X(k)\to\tau(X)(k)$
\label{nabla}
induced by $e$ as follows:
writing $e$ as a $k$-algebra homomorphism $e:k\to \E^e(k)$ we see that it  induces a map from the $X(k)$ to $X\big(\E^e(k)\big)$.
This, together with the identification $\tau(X)(k)=X\big(\E^e(k)\big)$ from Lemma~\ref{taunpoints}, gives us $\nabla:X(k)\to\tau(X)(k)$.

\begin{proposition}
\label{nablafunctorial}
Suppose $f:X\to Y$ is a morphism of schemes over $k$.
\begin{itemize}
\item[(a)]
The following diagram commutes:
$$\xymatrix{
\tau(X)(k)\ar[rr]^{\tau(f)}&&\tau(Y)(k)\\
X(k)\ar[u]^{\nabla^X}\ar[rr]^f&& Y(k)\ar[u]_{\nabla^Y}
}$$
\item[(b)]
Suppose $a\in Y(k)$.
Then $\tau(X)_{\nabla(a)}=\tau(X_a)$, where $\tau(X)_{\nabla(a)}$ is the fibre of $\tau(f):\tau(X)\to\tau(Y)$ over $\nabla(a)$.
\end{itemize}
\end{proposition}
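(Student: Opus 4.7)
The plan is to derive both parts from the functorial identification $\tau(X)(R) = X(\E^e(R))$ of Lemma~\ref{taunpoints}. Under that identification, $\nabla^X$ sends $x\in X(k)$, viewed as a morphism $\spec(k)\to X$, to $x\circ\underline e$, where $\underline e:\spec(\E^e(k))\to\spec(k)$ is the morphism induced by $e:k\to\E^e(k)$; and $\tau(f)$ acts on $R$-points as post-composition with $f$, as recorded in the paragraph immediately preceding the proposition.

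For part (a), with these descriptions in hand the naturality square is immediate:
$$\tau(f)\bigl(\nabla^X(x)\bigr) \;=\; f\circ x\circ\underline e \;=\; \nabla^Y\bigl(f(x)\bigr),$$
since post-composition with $f$ trivially commutes with pre-composition with $\underline e$.

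For part (b), I would prove the scheme-theoretic identity by comparing functors of points on an arbitrary $k$-algebra $R$. By definition $\tau(X)_{\nabla(a)}:=\tau(X)\times_{\tau(Y)}\spec(k)$, so its $R$-points are those $p\in\tau(X)(R)$ whose image in $\tau(Y)(R)$ equals the image of $\nabla(a)\in\tau(Y)(k)$. Translating via Lemma~\ref{taunpoints}, this amounts to morphisms $\tilde p:\spec(\E^e(R))\to X$ such that $f\circ\tilde p$ factors as $\spec(\E^e(R))\to\spec(k)\xrightarrow{a}Y$, where the first arrow is induced by $k\xrightarrow{e}\E^e(k)\to\E^e(R)$. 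On the other hand, since $X_a = X\times_Y\spec(k)$, Lemma~\ref{taunpoints} gives $\tau(X_a)(R) = X_a(\E^e(R))$, which by the universal property of the fibre product is exactly the set of $\E^e(R)$-points of $X$ whose composition with $f$ factors through $a$ via the canonical structure map $\spec(\E^e(R))\to\spec(k)$. The two functors visibly coincide, and naturality in $R$ is inherited from the naturality built into Lemma~\ref{taunpoints}, producing the desired isomorphism of schemes.

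The only real care needed is bookkeeping: one must check that the identification $\tau(Y)(R)=Y(\E^e(R))$ from Lemma~\ref{taunpoints} is natural in $R$ in the right sense, so that the base change $\nabla(a)_R\in\tau(Y)(R)$ corresponds under the identification to $a$ composed with the structure morphism $\spec(\E^e(R))\to\spec(k)$. This is a direct consequence of how the identification is constructed (via the canonical morphism $r^Y_{\E,e}$) and requires no new input beyond Lemma~\ref{taunpoints} itself.
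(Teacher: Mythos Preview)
Your proof is correct and follows essentially the same approach as the paper: both parts are derived from the identification of Lemma~\ref{taunpoints}, with $\nabla$ read as pre-composition by $\underline{e}$ and $\tau(f)$ as post-composition by $f$. For part~(b) the paper phrases the computation via the intermediate scheme $X_{\underline{e}(a)}:=X\times_Y\spec(\E^e(k))$ and the observation $X_{\underline{e}(a)}=X_a\times_k\E^e(k)$, whereas you unpack the fibre product directly as a subfunctor of $X(\E^e(-))$; the content is the same.
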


\begin{proof}
Identifying $\tau(X)(k)$ with $X\big(k\otimes_e\E(k)\big)$ and $\tau(Y)(k)$ with $Y\big(k\otimes_e\E(k)\big)$, $\tau(f)$ and $f$ on $k$-points are both given by composing with $f$.
On the other hand, by definition, under the same identifications, $\nabla^X$ and $\nabla^Y$ are both given by pre-composition with $\underline{e}:\spec\big(k\otimes_e\E(k)\big)\to\spec(k)$.
Part~(a) follows immediately.

We check part~(b) at $R$-points for any given $k$-algebra $R$.
Under the canonical identifications,
$\nabla(a)\in\tau(Y)(k)$ is given by $\underline{e}(a)\in Y\big(\E^e(k)\big)$ as in the diagram
$$\xymatrix{
\spec\big(\E^e(k)\big)\ar[d]^{\underline{e}}\ar[dr]^{\underline{e}(a)}\\
\spec(k)\ar[r]^a & Y}
$$
Thus $\tau(X)_{\nabla(a)}(R)$ is identified with $X_{\underline{e}(a)}\big(\E^e(R)\big)$ where $X_{\underline{e}(a)}:=X\times_Y\spec(\E^e(k)\big)$.
On the other hand, under the same canonical identification, we have $\tau(X_a)(R)=X_a\big(\E^e(R)\big)$, where $X_a:=X\times_Y\spec(k)$.
But note that $X_{\underline{e}(a)}=X_a\times_k{\E^e(k)}$.
and so $X_{\underline{e}(a)}\big(\E^e(R)\big)=X_a\big(\E^e(R)\big)$.
\end{proof}

\bigskip
\subsection{Comparing prolongations}
\label{compare}
Fix two finite free $\mathbb{S}$-algebra schemes $\mathcal{E}$ and $\F$ over a ring $A$, together with a ring-scheme morphism $\alpha:\E\to\F$ over $A$.
Suppose $k$ is an $A$-algebra  and $e$ and $f$ are such that $(k,e)$ is an $\mathcal{E}$-ring, $(k,f)$ is an $\mathcal{F}$-ring, and $\alpha\circ e=f$ (so that $\alpha^k:\E^e(k)\to\F^f(k)$ is a $k$-algebra homomorphisms).
For any $k$-algebra $R$, since $\alpha^R$ lifts $\alpha^k$, it follows that $\alpha^R:\E^e(R)\to\F^f(R)$ is also a $k$-algebra homomorphism.

Given a scheme $X$ over $k$, $\alpha$ induces a morphism of schemes $\hat \alpha:\tau(X,\E,e)\to\tau(X,\F,f)$.
Indeed, for any $k$-algebra $R$, pre-composition with the induced morphism of schemes $\spec\big(\F^f(R)\big)\to\spec\big(\E^e(R)\big)$ over $k$, in turn induces a map from $X\big(\E^e(R)\big)$ to $X\big(\F^f(R)\big)$.
We now point out some of the properties of this morphism.

\begin{proposition}
\label{hatproperties}
\begin{itemize}
\item[(a)]
The following diagram commutes:
$$\xymatrix{
\tau(X,\E,e)(k)\ar[rr]^{\hat\alpha} && \tau(X,\F,f)(k)\\
& X(k)\ar[ul]^{\nabla_\E}\ar[ur]_{\nabla_\F}
}$$
\item[(b)]
Suppose $f:X\to Y$ is a morphism of schemes over $k$.
Then the following diagram commutes:
$$\xymatrix{
\tau(X,\E,e)\ar[d]_{\hat\alpha^X}\ar[rr]^{\tau^\E(f)}&&\tau(Y,\E,e)\ar[d]^{\hat\alpha^Y}\\
\tau(X,\F,f)\ar[rr]^{\tau^\F(f)}&& \tau(Y,\F,f)
}$$
\item[(c)]
If $\alpha:\E\to\F$ is a closed embedding, then so is $\hat\alpha:\tau(X,\E,e)\to\tau(X,\F,f)$.
\end{itemize}
\end{proposition}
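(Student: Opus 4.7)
The plan is to handle (a) and (b) formally at the level of $R$-points, using the identification $\tau(X,\E,e)(R)=X(\E^e(R))$ of Lemma~\ref{taunpoints}, and to prove (c) by reducing to the affine case and then doing an explicit coordinate computation that exploits the $\mathbb{S}$-linearity of $\alpha$.

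First I would unwind the description of $\hat\alpha$ on $R$-points. By construction $\hat\alpha$ sends a point $p:\spec(\E^e(R))\to X$ to $p\circ\underline{\alpha^R}$; note that $\alpha^R:\E^e(R)\to\F^f(R)$ is indeed a $k$-algebra map because $\alpha^k\circ e=f$ by hypothesis and $\alpha$ is natural in $R$. Part (a) is then immediate: for $a\in X(k)$, $\nabla_\E(a)=a\circ\underline{e}$, so $\hat\alpha(\nabla_\E(a))=a\circ\underline{e}\circ\underline{\alpha^k}=a\circ\underline{f}=\nabla_\F(a)$. Part (b) is also formal: on $R$-points the prolongation of a morphism of schemes $X\to Y$ is post-composition while $\hat\alpha$ is pre-composition by $\underline{\alpha^R}$, and pre- and post-composition commute.

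For (c), I would first reduce to $X$ affine. Retracing the passage from affine to general schemes in the proof of Proposition~\ref{weil}, for any affine open $U\subseteq X$ one gets $\tau(U,\E,e)$ as an open subscheme of $\tau(X,\E,e)$; these cover $\tau(X,\E,e)$ as $U$ varies, and naturality combined with the fact that $\underline{\alpha^R}:\spec(\F^f(R))\to\spec(\E^e(R))$ is surjective on underlying spaces (since $\F(R)$ is a finite and hence integral $\E(R)$-algebra via the injection $\alpha^R$) gives $\hat\alpha^{-1}(\tau(U,\F,f))=\tau(U,\E,e)$. Since being a closed embedding is local on the target, this reduces (c) to the case $X=\spec(k[y_1,\dots,y_m]/I)$.

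In that case, fix the bases $e_1,\dots,e_{\ell_\E}$ of $\E(k)$ and $f_1,\dots,f_{\ell_\F}$ of $\F(k)$ provided by $\psi_\E$ and $\psi_\F$. Proposition~\ref{weil} presents $\tau(X,\E,e)$ and $\tau(X,\F,f)$ as affine with coordinate rings generated by variables $y_{ij}$ (resp.\ $y_{ik}$). Since $\alpha$ is a morphism of $\mathbb{S}$-algebra schemes, hence $\mathbb{S}$-linear, it is given on bases by a single matrix $M$ over $A$: $\alpha^R(e_j)=\sum_k M_{kj}f_k$ for all $R$. Tracing through the identifications shows $\hat\alpha^*(y_{ik})=\sum_j M_{kj}y_{ij}$ on coordinate rings. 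The hypothesis that $\alpha$ is a closed embedding of $A$-schemes translates to $M$ having a left inverse $N$ over $A$ (surjectivity of the linear ring map $A[y_1,\dots,y_{\ell_\F}]\to A[x_1,\dots,x_{\ell_\E}]$, $y_k\mapsto\sum_j M_{kj}x_j$, forces each $x_j$ to be an $A$-linear combination of the images). The identity $y_{ij}=\sum_k N_{jk}\hat\alpha^*(y_{ik})$ then shows every generator lies in the image of $\hat\alpha^*$, giving surjectivity and hence that $\hat\alpha$ is a closed embedding. The main obstacle I anticipate is not this affine computation but rather the gluing step: verifying that prolongations commute with affine open covers in both $\E$ and $\F$ compatibly under $\hat\alpha$, in order to justify the local reduction.
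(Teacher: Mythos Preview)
Your arguments for (a) and (b) are essentially identical to the paper's: both unwind $\hat\alpha$ and $\nabla$ as pre-composition maps on $R$-points and observe that pre- and post-composition commute.

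For (c), your route is genuinely different from the paper's. The paper argues functorially: a closed embedding $\alpha$ identifies $\E$ with a sub-$\mathbb{S}$-algebra scheme $\mathcal{B}\leq\F$, and since $\F$ is affine, $\mathcal{B}$ is the kernel of some group-scheme map $\beta:\F\to\mathbb{G}_a^N$. Then $\hat\alpha$ identifies $\tau(X,\E,e)$ with $\tau(X,\mathcal{B},f)$, and for any $R$ the set $X(\mathcal{B}^f(R))\subseteq X(\F^f(R))$ is cut out by the condition that the coordinates land in $\ker\beta^R$; these conditions give closed equations in $\tau(X,\F,f)$ uniformly. This avoids any reduction to affine $X$ and any explicit matrix manipulation.

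Your approach instead reduces to affine $X$ and extracts from the closed-embedding hypothesis a left inverse $N$ to the matrix $M$ of $\alpha$, then checks surjectivity of $\hat\alpha^*$ on generators. This is correct: your degree argument for the existence of $N$ is sound, and the formula $\hat\alpha^*(y_{ik})=\sum_j M_{kj}y_{ij}$ is exactly what the Weil-restriction presentation gives. The gluing step you worry about is also fine as written---your observation that $\alpha^R$ is a finite injective ring map (injectivity coming from $NM=I$) does force $\underline{\alpha^R}$ to be surjective on spectra, which is what you need for $\hat\alpha^{-1}(\tau(U,\F,f))=\tau(U,\E,e)$. What the paper's approach buys is that it sidesteps this local reduction entirely and makes transparent \emph{which} equations cut out the image (namely those coming from $\beta$); what your approach buys is an explicit description of $\hat\alpha^*$ and a concrete section on coordinate rings, which could be useful if one later wants to compute with $\hat\alpha$.

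One small point: you invoke ``$\alpha$ is a morphism of $\mathbb{S}$-algebra schemes, hence $\mathbb{S}$-linear'' to get that $\alpha$ is given by a single $A$-matrix. The paper's setup in \S\ref{compare} literally says only ``ring-scheme morphism,'' but its own proof of (c) treats the image as a sub-$\mathbb{S}$-algebra scheme, so the $\mathbb{S}$-linearity is being assumed there too; you are not adding a hypothesis the paper does not already use.
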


\begin{proof}
Part~(a) is immediate from the definitions using the fact that $\alpha$ preserves the $k$-algebra structures coming from $e$ and $f$.

We show the diagram in part~(b) commutes by evaluating on $R$-points for an arbitrary $k$-algebra $R$.
Making the usual identifications, we need to show that the following diagram commutes:
$$\xymatrix{
X\big(\E^e(R)\big)\ar[d]\ar[r] & Y\big(\E^e(R)\big)\ar[d]\\
X\big(\F^f(R)\big)\ar[r] & Y\big(\F^f(R)\big)
}$$
where the horizontal arrows are given by pre-composition with $f$ itself, while the vertical arrows are given by pre-composition with $\spec\big(\F^f(R)\big)\to\spec\big(\E^e(R)\big)$.
It is now obvious that this square commutes.

For part~(c), 
to say that $\alpha$ is a closed embedding means that there is a sub-$\mathbb{S}$-algebra scheme ${\mathcal B} \leq {\mathcal F}$  for which $\alpha$ induces an isomorphism between ${\mathcal E}$ and ${\mathcal B}$.
As ${\mathcal F}$ is affine over $A$, ${\mathcal B}$ is defined as the kernel of some map of group schemes $\beta:{\mathcal F} \to {\mathbb G}_a^N$ for some $N$.
Now, for any $k$-algebra $R$, because $\alpha$ is an isomorphism between ${\mathcal E}^e(R)$ and ${\mathcal B}^f(R)$, $\alpha$ induces an identification of $X\big({\mathcal E}^e(R)\big)$ with
$X\big({\mathcal B}^f(R)\big)$.
That is, $\widehat{\alpha}$ is an isomorphism between $\tau(X,\E,e)$ and $\tau(X,\B,f)$.
Note that $X({\mathcal B}^f(R))$ consists of those ${\mathcal F}^f(R)$-valued points of $X$ which happen to belong to ${\mathcal B}^f(R)$ and this set is cut out by $\beta$.
These give us the equations expressing $\tau(X,\B,f)$ as a closed subscheme of $\tau(X,\F,f)$.
\end{proof}

\begin{lemma}
\label{rcompared}
The following diagram commutes.
$$\xymatrix{
\tau(X,\E,e)\times_k\F(k)\ar[rr]^{\hat\alpha\otimes\id}\ar[d]^{\id\otimes\underline{\alpha}} && \tau(X,\F,f)\times_k\F(k)\ar[d]^{r^X_{\F,f}}\\
\tau(X,\E,e)\times_k\E(k)\ar[rr]^{r^X_{\E,e}} && X
}$$
where $r^{\cdot}_{\cdot,\cdot}$ are the canonical morphisms of Definition~\ref{canonicalmorph}, associated to the respective prolongations.
\end{lemma}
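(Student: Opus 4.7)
The plan is to check commutativity on $R$-points for an arbitrary $k$-algebra $R$ and invoke Yoneda. The main tool will be Lemma~\ref{taunpoints}, which identifies $\tau(X,\E,e)(R)$ with $X\big(\E^e(R)\big)$ via $p\mapsto \tilde p := r^X_{\E,e}\circ(p\times_k\E(k))$, and likewise $\tau(X,\F,f)(R)$ with $X\big(\F^f(R)\big)$. Combining these identifications with the $R$-point description of $\hat\alpha$ recalled just before Proposition~\ref{hatproperties} should reduce the square to a small ring-level identity involving $\alpha^R$.

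The first preparatory step is to derive a formula for $r^X_{\F,f}$ on an arbitrary $R$-point $(p',q)$ of $\tau(X,\F,f)\times_k\F(k)$, where $q:\F(k)\to R$ is a $k$-algebra map. Write $(p',q) = (p'\times_k\F(k))\circ\tilde q$, where $\tilde q:\spec(R)\to\spec(\F(R))$ is the morphism corresponding to the ring map $\F(k)\otimes_kR\to R$, $x\otimes r\mapsto q(x)\cdot r$. Applying the defining property of $r^X_{\F,f}$ from Lemma~\ref{taunpoints} yields $r^X_{\F,f}\circ(p',q) = \tilde{p'}\circ\tilde q$; an analogous formula holds for $r^X_{\E,e}$.

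Now chase an arbitrary $R$-point $(p,q)$ of $\tau(X,\E,e)\times_k\F(k)$ around both routes of the square. Along the top-right one, $\hat\alpha\otimes\id$ sends $(p,q)$ to $(\hat\alpha(p),q)$, and the $R$-point description of $\hat\alpha$ gives $\widetilde{\hat\alpha(p)} = \tilde p\circ\underline{\alpha^R}$, where $\underline{\alpha^R}:\spec(\F^f(R))\to\spec(\E^e(R))$ is induced by the $k$-algebra homomorphism $\alpha^R:\E^e(R)\to\F^f(R)$. The formula from the previous step then produces $\tilde p\circ\underline{\alpha^R}\circ\tilde q$. Along the bottom-left route, $\id\otimes\underline{\alpha}$ sends $(p,q)$ to $(p,q\circ\alpha^k)$, and the same formula gives $\tilde p\circ\widetilde{q\circ\alpha^k}$.

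The remaining step is to verify the identity $\underline{\alpha^R}\circ\tilde q = \widetilde{q\circ\alpha^k}$ of morphisms $\spec(R)\to\spec(\E(R))$. Under the identifications $\E(R)=\E(k)\otimes_kR$, $\F(R)=\F(k)\otimes_kR$, and $\alpha^R = \alpha^k\otimes\id_R$ supplied by Remark~\ref{presentation}, both composites correspond on coordinate rings to the map $\E(k)\otimes_kR\to R$ sending $x\otimes r\mapsto q(\alpha^k(x))\cdot r$. I do not anticipate any genuine obstacle: once the $R$-point descriptions of $r^X_{\cdot,\cdot}$ and $\hat\alpha$ are assembled, the whole argument is bookkeeping with the universal property of Weil restriction.
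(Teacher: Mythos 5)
Your argument is correct, but it takes a genuinely different route from the paper's. The paper reduces to $X=\spec(k[\mathbf{x}])$ by covering with affine opens, fixes bases $e_j$ of $\E$ and $f_{j'}$ of $\F$, and verifies equality of the two composites as ring homomorphisms $k[\mathbf{x}]\to k[\overline{\mathbf{y}}]\otimes_k\F(k)$ by chasing constants and each indeterminate $x$ through $r^*_{\E,e}(x)=\sum_j y_j\otimes e_j$ and the expansion $\alpha(e_j)=\sum_{j'}\alpha_{j'}(e_j)f_{j'}$. You instead stay coordinate-free: the factorization $(p',q)=(p'\times_k\F(k))\circ\tilde q$, the defining property $\tilde p=r^X\circ(p\times_k\E(k))$ from Lemma~\ref{taunpoints}, and the $R$-point description of $\hat\alpha$ reduce everything to the ring identity $\underline{\alpha^R}\circ\tilde q=\widetilde{q\circ\alpha^k}$, which holds because $\alpha^R=\alpha^k\otimes\id_R$ under the identifications of Remark~\ref{presentation}. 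Your version makes transparent that the lemma is a formal consequence of the universal property of Weil restriction plus naturality of $\alpha$, whereas the paper's computation has the side benefit of producing the explicit formulas for $r^*$ that are reused in the coordinate description of the interpolating map in Section 6. Two small points you should make explicit: (i) since $r^X_{\E,e}$ and $\id\otimes\underline{\alpha}$ are not $k$-morphisms (Remark~\ref{rnotoverk}), ``check on $R$-points for $k$-algebras $R$ and invoke Yoneda'' requires the observation that every morphism $\spec(R)\to\tau(X,\E,e)\times_k\F(k)$ is a $k$-morphism for the $k$-algebra structure on $R$ obtained by composing with the structure map, so it suffices to test affine opens of the source with their induced $k$-structure; (ii) the identity $\alpha^R=\alpha^k\otimes\id_R$ uses that $\alpha$ is $\mathbb{S}$-linear, the same implicit assumption the paper makes when it writes $\alpha$ as a tuple of linear polynomials.
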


\begin{proof}
Identifying $\E=\mathbb{S}^{\ell_\E}$ and $\F=\mathbb{S}^{\ell_\F}$ we take $e_0,\dots,e_{\ell_\E-1}$ to be the standard basis for $\E$ and $f_0,\dots,f_{\ell_\F-1}$ to be the standard basis for $\F$.
We write $\alpha=(\alpha_1,\dots,\alpha_{\ell_\F})$, where the $\alpha_1,\dots,\alpha_{\ell_\F}$ are linear polynomials in $\ell_\E$-variables.

Covering $X$ by affine open subsets, and using functoriality of the maps involved,
it is not hard to see that it suffices to consider affine space $X=\spec\big(k[\bf x]\big)$, where $\bf x$ is a (possibly infinite) tuple of indeterminates.
We have identifications
$$\tau(X,\E,e)=\spec\big(k[\overline{ \bf y}]\big)$$
where $\overline{\bf y} =({\bf y}_j)_{0\leq j\leq \ell_\E-1}$ is $\ell_\E$ copies of $\bf x$,
and
$$\tau(X,\F,f)=\spec\big(k[\overline{ \bf z}]\big)$$
where $\overline{ \bf z} =({\bf z}_{j'})_{0\leq j'\leq \ell_\F-1}$ is $\ell_\F$ copies of $\bf x$.
Taking global sections, we need to show that the following diagram commutes:
$$\xymatrix{
k[\bf x ]\ar[rr]^{r^*_{\F,f}}\ar[d]^{r^*_{\E,e}} && k[\overline{\bf z}]\otimes_k\F(k)\ar[d]^{\alpha^*\otimes\id}\\
k[\overline{\bf y}]\otimes_k\E(k)\ar[rr]^{\id\otimes\alpha} && k[\overline{\bf y}]\otimes_k\F(k)
}$$
It is clear that these maps commute on constants, so it sufffices to fix $x\in{\bf x}$ and chase it.
\begin{eqnarray*}
\alpha^*\otimes\id\big(r^*_{\F,f}(x)\big)
&=&
\alpha^*\otimes\id\big(\sum_{j'=0}^{\ell_\F-1}z_{j'}\otimes f_{j'}\big)\\
&=&
\sum_{j'=0}^{\ell_\F-1}\alpha_{j'}(y_{0},\dots,y_{\ell_\E-1})\otimes f_{j'}
\end{eqnarray*}
On the other hand,
\begin{eqnarray*}
\id\otimes\alpha\big(r^*_{\E,e}(x)\big)
&=&
\id\otimes\alpha\big(\sum_{j=0}^{\ell_\E-1}y_{j}\otimes e_{j}\big)\\
&=&
\sum_{j=0}^{\ell_\E-1}y_{j}\otimes\alpha(e_j)\\
&=&
\sum_{j=0}^{\ell_\E-1}y_{j}\otimes \sum_{j'=0}^{\ell_\F-1}\alpha_{j'}(e_j)f_{j'}\\
&=&
\sum_{j'=0}^{\ell_\F-1}\big(\sum_{j=0}^{\ell_\E-1}\alpha_{j'}(e_j)y_{j}\big)\otimes f_{j'}\\
&=&
\sum_{j'=0}^{\ell_\F-1}\alpha_{j'}(y_{0},\dots,y_{\ell_\E-1})\otimes f_{j'}
\end{eqnarray*}
as desired.
\end{proof}

\bigskip
\subsection{Composing prolongations}
\label{compose}
Fix two finite free $\mathbb{S}$-algebra schemes $\mathcal{E}$ and $\F$ over a ring $A$.
For any $A$-algebra $R$, the $R$-algebra structure on $\F(R)$ makes it into an $A$-algebra as well,
and hence it makes sense to consider $\E\big(\F(R)\big)$.
This inherits an $R$-algebra structure via
$$\xymatrix{
R\ar[rr]^{s_\F^R} && \F(R)\ar[rr]^{s_\E^{\F(R)}\quad} && \E\big(\F(R)\big)
}$$
Moreover, $\E\big(\F(R)\big)$ is thereby finite and free over $R$ witnessed by the $R$-linear isomorphism
$$\xymatrix{
 \E\big(\F(R)\big)\ar[rr]^{\psi_\E^{\F(R)}} &&\F(R)^{\ell_\E}\ar[rr]^{(\psi_\F^R)^{\ell_\E}}&& (R^{\ell_\F})^{\ell_\E}
 }$$
Let $\E\F$ denote the corresponding finite free $\mathbb{S}$-algebra scheme.
So for any $A$-algebra $R$, $\E\F(R)=\E\big(\F(R)\big)$, and $s_{\E\F}^R$ and $\psi_{\E\F}^R$ are the above displayed compositions.

\begin{remark}
\label{dmn}
Note that $\E\F$ is canonically isomorphic to $\E\otimes_\mathbb{S}\F$, and hence to $\F\E$, as an $\mathbb{S}$-algebra scheme.
Indeed, this is just Remark~\ref{presentation}: given an $A$-algebra $R$, $\E\big(\F(R)\big)$ is canonically identified with $\E(R)\otimes_R\F(R)$.
The induced isomorphism between $\F\big(\E(R)\big)$ and $\E\big(\F(R)\big)$ can be described in co-ordinates by
$$\xymatrix{
\F\big(\E(R)\big)\ar[dd]_{\psi_{\F\circ\E}^R} && \E\big(\F(R)\big)\\
\\
(R^{\ell_\E})^{\ell_\F}\ar[rr] && (R^{\ell_\F})^{\ell_\E}\ar[uu]_{(\psi_{\E\F}^R)^{-1}}
}$$
where $(R^{\ell_\E})^{\ell_\F}\to (R^{\ell_\F})^{\ell_\E}$ is the natural co-ordinate change.
\end{remark}

Now fix an $A$-algebra $k$ euipped with an $\mathcal{E}$-ring structure
$(k,e)$
and an $\mathcal{F}$-ring structure $(k,f)$.
Consider the $\E\F$-ring structure on $k$ given by the composition of $e$ with $\E(f)$,
$$\xymatrix{k \ar[rr]^{e} && \E(k) \ar[rr]^{\E(f)} && \E\big(\F(k)\big)}.$$We denote this homomorphism by $ef:k\to\E\F(k)$.

\begin{lemma}
\label{rem-emcircen}
The $k$-algebras $\E\F^{ef}(k)$ and $\E^e\big(\F^f(k)\big)$ are naturally isomorphic.
More precisely, there is a canonical ring isomorphism $\gamma:\E\F(k)\to\E(k)\otimes_k\F^f(k)$ such that the following commutes:
$$\xymatrix{
\E\F(k)\ar[rr]^{\gamma \ \ \ } && \E(k)\otimes_k\F^f(k)\\
k\ar[u]^{ef}\ar[rr]^{e} && \E(k)\ar[u]
}$$
\end{lemma}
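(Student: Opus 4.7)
The plan is to obtain $\gamma$ directly from Remark~\ref{presentation} applied to the $k$-algebra $R := \F^f(k)$. That remark provides, for every $k$-algebra $R$, a canonical identification $\E(R) = \E(k) \otimes_k R$ which is simultaneously an isomorphism of $R$-algebras and of $\E(k)$-algebras, where on the right-hand side $\E(k)$ acts through the first tensor factor and $R$ acts through the second, and on the left-hand side the $\E(k)$-algebra structure is by $\E(\alpha):\E(k)\to\E(R)$ for $\alpha:k\to R$ the structural map.

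Taking $R = \F^f(k)$ (whose structural map is $\alpha = f$), the point to note is that $\E(R)$ depends only on the underlying ring of $R$, not on how $R$ is regarded as a $k$-algebra, so as rings we have
\[
\E\big(\F^f(k)\big) \;=\; \E\big(\F(k)\big) \;=\; \E\F(k).
\]
Thus the isomorphism furnished by Remark~\ref{presentation} becomes a canonical ring isomorphism
\[
\gamma:\E\F(k) \;\longrightarrow\; \E(k)\otimes_k \F^f(k),
\]
which is the map we want.

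It remains to verify the commutativity of the square. By definition $ef = \E(f)\circ e$, so
\[
\gamma\circ (ef) \;=\; \gamma\circ \E(f)\circ e.
\]
The $\E(k)$-algebra compatibility in Remark~\ref{presentation} says precisely that $\gamma\circ\E(\alpha)$ coincides with the canonical map $\E(k)\to \E(k)\otimes_k R$, $x\mapsto x\otimes 1$; specialising to $\alpha = f$ identifies $\gamma\circ\E(f)$ with the inclusion of the left tensor factor $\E(k)\hookrightarrow \E(k)\otimes_k \F^f(k)$. Composing with $e$ yields exactly the right-hand path in the diagram of the lemma.

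There is no real obstacle here beyond keeping track of the two distinct $k$-algebra structures involved; the content of the lemma is essentially a repackaging of Remark~\ref{presentation} once one observes that the ring $\E\big(\F(k)\big)$ is insensitive to whether $\F(k)$ is regarded as a $k$-algebra via $s_\F^k$ or via $f$, while the induced $k$-algebra structures on the two sides of $\gamma$ differ and are matched up by the stated commutativity.
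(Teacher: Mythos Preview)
Your proof is correct and follows essentially the same approach as the paper's own proof: both apply Remark~\ref{presentation} to the $k$-algebra $R=\F^f(k)$, observe that $\E\big(\F(k)\big)$ and $\E\big(\F^f(k)\big)$ coincide as rings since $\F(k)$ and $\F^f(k)$ agree as $A$-algebras, and then reduce the square to the triangle $\gamma\circ\E(f)=(\text{inclusion of }\E(k))$ via $ef=\E(f)\circ e$, which is exactly the $\E(k)$-algebra compatibility stated in Remark~\ref{presentation}.
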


\begin{proof}
By Remark~\ref{presentation}, with $R=\F^f(k)$, we have a canonical identification of $\E\big(\F^f(k)\big)$ with $\E(k)\otimes_k\F^f(k)$.
On the other hand $\F(k)$ and $\F^f(k)$ are identical as $A$-algebras and hence $\E\big(\F(k)\big)$ and $\E\big(\F^f(k)\big)$ are canonically isomorphic as rings.
Hence we obtain a canonical ring isomorphism $\gamma:\E\big(\F(k)\big)\to\E(k)\otimes_k\F^f(k)$.
Now, since $ef=\E(f)\circ e$, the desired commuting square reduces to showing that
$$\xymatrix{
\E\big(\F(k)\big)\ar[rr]^{\gamma \ \ \ } && \E(k)\otimes_k\F^f(k)\\
& \E(k)\ar[ul]^{\E(f)}\ar[ur]
}$$
commutes.
But this is just the fact that $\E\big(\F^f(k)\big)$ identifies with $\E(k)\otimes_k\F^f(k)$ even as an $\E(k)$-algebra (cf. Remark~\ref{presentation}).
\end{proof}

We wish to describe, in terms of Weil restriction, the composition of the $\E$- and $\F$-prolongations.

\begin{proposition}
\label{taumtaun}
Suppose $X$ is a scheme over $k$.
Then
$$\tau\big(\tau(X,\E,e),\F,f\big)=\tau(X,\E\F,ef)$$
That is, the $\F$-prolongation of the $\E$-prolongation of $X$ is the Weil restriction of $X\times_{ef}\E\F(k)$ from $\E\F(k)$ to $k$.
Moreover, under this identification,
$$\nabla_{\F,f}\circ\nabla_{\E,e}:X(k)\to\tau\big(\tau(X,\E,e),\F,f\big)(k)$$
becomes
$$\nabla_{\E\F,ef}:X(k)\to\tau(X,\E\F, ef)(k)$$
\end{proposition}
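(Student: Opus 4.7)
My plan is to reduce the identity of schemes to an identity of functors of points, and then to derive that identity from an $R$-parametrized version of Lemma~\ref{rem-emcircen}. Let me lay out the steps.

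First, I would use Lemma~\ref{taunpoints} to translate each side of the proposed equality into $X$-points over an appropriate algebra. On the right-hand side the lemma gives directly
$$\tau(X,\E\F,ef)(R) \;=\; X\bigl(\E\F^{ef}(R)\bigr).$$
For the left-hand side I would apply Lemma~\ref{taunpoints} twice: first to the outer $\F$-prolongation, which yields $\tau(\tau(X,\E,e),\F,f)(R) = \tau(X,\E,e)(\F^f(R))$, and then to the inner $\E$-prolongation with the $k$-algebra $\F^f(R)$ in place of $R$, to obtain $\tau(X,\E,e)(\F^f(R)) = X\bigl(\E^e(\F^f(R))\bigr)$. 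Taken together,
$$\tau\bigl(\tau(X,\E,e),\F,f\bigr)(R) \;=\; X\bigl(\E^e(\F^f(R))\bigr).$$

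Second, I would prove the $R$-parameterised version of Lemma~\ref{rem-emcircen}: a canonical $k$-algebra isomorphism $\E\F^{ef}(R) \cong \E^e\bigl(\F^f(R)\bigr)$, natural in the $k$-algebra $R$. Both sides agree with the ring $\E(\F(R))$, and the point is merely to verify that the two $k$-algebra structures coincide. Using Remark~\ref{presentation} (applied with ambient algebra $\F^f(R)$) one identifies $\E(\F^f(R))$ with $\E(k)\otimes_k\F^f(R)$ compatibly with $\E(k)$-algebra structures, and then the relation $ef = \E(f)\circ e$ shows that the $k$-algebra structure coming from $ef$ on the left matches the $k$-algebra structure coming from $e$ on the right. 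Naturality in $R$ is immediate, since as rings the map is just the identity on $\E(\F(R))$. Yoneda then upgrades the natural bijection of $R$-points to the claimed identification of schemes.

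Third, for the statement about $\nabla$, I would chase definitions on $k$-points. Recall that $\nabla_{\E,e}$ sends $a\in X(k)$ to $a\circ\underline{e}\in X(\E^e(k))$. Under the identification $\tau(\tau(X,\E,e),\F,f)(k) = X(\E^e(\F^f(k)))$ obtained above, the map $\nabla_{\F,f}$ on $\tau(X,\E,e)$ corresponds to precomposition with the morphism $\spec(\E^e(\F^f(k)))\to\spec(\E^e(k))$ induced by $\E(f)$. So the composite takes $a$ to $a\circ\underline{e}\circ\underline{\E(f)} = a\circ\underline{\E(f)\circ e} = a\circ\underline{ef}$, which is by definition $\nabla_{\E\F,ef}(a)$.

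The main obstacle is purely bookkeeping: three competing $k$-algebra structures on the underlying ring $\E(\F(R))$ (one via $ef$, one via $e$ applied to $\F^f(R)$, and one via $\E(f)$ on an already $\E(k)$-twisted ring) all appear, and the entire content of the proposition is that they coincide once $ef=\E(f)\circ e$ is taken into account. This is exactly the $R$-parameterised version of Lemma~\ref{rem-emcircen}, and once it is in hand the rest of the proof is formal.
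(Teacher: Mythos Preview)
Your proof is correct and closely parallels the paper's argument, but the technical route differs. The paper works directly with the Weil restriction definition of $\tau$: it writes $\tau^\F(\tau^\E X)(U)$ as an iterated Weil restriction, then invokes compatibility of Weil restriction with base change (Fact~\ref{weilbc}) to collapse the two restrictions into a single one over $S=\E(k)\otimes_k\F^f(k)$, and finally applies Lemma~\ref{rem-emcircen} to identify this with the Weil restriction over $\E\F(k)$. You instead invoke Lemma~\ref{taunpoints} twice to reach $X\bigl(\E^e(\F^f(R))\bigr)$ directly, and then prove the $R$-parameterised version of Lemma~\ref{rem-emcircen} to match this with $X\bigl(\E\F^{ef}(R)\bigr)$. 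Your route is cleaner because Lemma~\ref{taunpoints} has already absorbed the Weil restriction bookkeeping; the paper's route is slightly more explicit about where the base-change step occurs. For the ``moreover'' clause the two arguments are essentially identical: both unwind $\nabla_{\F,f}$ as precomposition with the map induced by $\E(f)$ (the paper phrases this as $f\otimes_k\E(k)$ via the tensor description) and then use $ef=\E(f)\circ e$.
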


\begin{proof}
For ease of notation, let $S:=\E(k)\otimes_k\F^f(k)$.
Given any scheme $U$ over $k$
\begin{eqnarray*}
\tau^\F\big(\tau^\E(X)\big)(U)
& = &
\r_{\F(k)/k}\big(\r_{\E(k)/k}(X\times_k\E^e(k))\times_{f}\F(k)\big)(U)\\
& = &
\r_{\F(k)/k}\big(\r_{S/\F(k)}\big(X\times_k\E^e(k)\times_{\E(k)} S\big)\big)(U)\\
& = &
\hom_{\F(k)}\big(U\times_k\F(k),\r_{S/\F(k)}\big(X\times_k\E^e(k)\times_{\E(k)} S\big)\big)\\
& = &
\hom_{S}\big(U\times_k S, X\times_k\E^e(k)\times_{\E(k)} S\big)
\end{eqnarray*}
where the second equality is by the compatibility of Weil restrictions with base change (cf. Fact~\ref{weilbc}).
Now, applying the ring isomorphism $\gamma:\E\big(\F(k)\big)\to S$ given by Lemma~\ref{rem-emcircen},
and keeping in mind the commuting square given by that lemma,
we see that this last representation of $\tau^\F\big(\tau^\E(X)\big)(U)$ identifies with
$$\hom_{\E\F(k)}\big(U\times_k\E\F(k),X\times_{ef}\E\F(k)\big)$$
which is $\r_{\E\F(k))/k}\big(X\times_{ef}\E\F(k)\big)(U)$
as desired.

For the moreover clause, first note that under the identification
$$\tau\big(\tau(X,\E,e),\F,f\big)(k) = \tau(X,\E,e)\big(\F^f(k)\big)$$
$\nabla_{\F,f}$ is by definition precomposition with $f^*:\spec(k)\to\spec\big(\F^f(k)\big)$.
Then, under the further identification of $\tau(X,\E,e)(k)$ with $X\big(\E^e(k)\big)$ and of 
$\tau(X,\E,e)\big(\F^f(k)\big)$ with
$X\big(\E^e(\F^f(k))\big)$, it is not hard to see that $\nabla_{\F,f}$ becomes precomposoition with $\big(f\otimes_k\E(k)\big)^*$.
Hence $\nabla_{\F,f}\circ\nabla_{\E,e}$, viewed as a map from $X(k)$ to $X\big(\E^e(\F^f(k))\big)$ is just precomposition with $\big((f\otimes_k\E(k))\circ e\big)^*$.
Now, applying $\gamma$ from Lemma~\ref{rem-emcircen}, $\big(f\otimes_k\E(k)\big)\circ e:k\to \E(k)\otimes_k\F^f(k)$ transforms to $ef:k\to\E\F(k)$, and precomposition with $(ef)^*$ is by definition $\nabla_{\E\F,ef}$.
So $\nabla_{\F,f}\circ\nabla_{\E,e}$ identifies with $\nabla_{\E\F,ef}$.
\end{proof}

\begin{corollary}
\label{commutingprolong} Prolongation spaces commute.
That is,
$\tau\big(\tau(X,\E,e),\F,f\big)$ is canonically isomorphic to $\tau\big(\tau(X,\F,f),\E,e\big)$
\end{corollary}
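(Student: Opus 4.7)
The plan is to reduce the corollary to a direct application of Proposition~\ref{taumtaun} together with the canonical identification of $\E\F$ with $\F\E$ from Remark~\ref{dmn}. By Proposition~\ref{taumtaun} applied in each order, we obtain
\[\tau\big(\tau(X,\E,e),\F,f\big) = \tau(X,\E\F,ef) \quad\text{and}\quad \tau\big(\tau(X,\F,f),\E,e\big) = \tau(X,\F\E,fe),\]
so it suffices to exhibit a canonical isomorphism between the two right-hand sides.

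Remark~\ref{dmn} provides a canonical isomorphism of $\mathbb{S}$-algebra schemes $\alpha : \E\F \to \F\E$ over $A$, obtained by factoring through the symmetric tensor product $\E \otimes_{\mathbb{S}} \F$. The key step will be to verify that $\alpha$ is compatible with the two $k$-algebra structures, in the sense that $\alpha^k \circ ef = fe$ as maps $k \to \F\E(k)$. Once this is in hand, the comparison formalism developed at the start of Subsection~\ref{compare} immediately supplies an induced morphism $\widehat{\alpha} : \tau(X, \E\F, ef) \to \tau(X, \F\E, fe)$, and since the $\widehat{(-)}$ construction is functorial in the ring-scheme morphism, the fact that $\alpha$ is an isomorphism implies $\widehat{\alpha}$ is an isomorphism as well.

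The main obstacle is thus the verification that $\alpha^k \circ ef = fe$. My plan is to compute this using Lemma~\ref{rem-emcircen}: that lemma canonically identifies $\E\F^{ef}(k)$ with $\E(k) \otimes_k \F^f(k)$ via a specified commuting triangle pinning down where $ef$ lands, and a symmetric application identifies $\F\E^{fe}(k)$ with $\F(k) \otimes_k \E^e(k)$ pinning down $fe$. Under these identifications, $\alpha^k$ corresponds to the tensor-factor swap coming from Remark~\ref{dmn}, and the desired intertwining should drop out by tracing the two triangles through the swap. The finicky part of the argument will be the bookkeeping of exponential versus standard $k$-algebra structures on the tensor products involved — once this is set up correctly, the identity $\alpha^k \circ ef = fe$ should be a formal consequence of the universal properties at play, with no further computation needed.
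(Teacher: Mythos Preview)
Your approach matches the paper's exactly: the paper's proof is the single line ``This is immediate from Proposition~\ref{taumtaun} and Remark~\ref{dmn},'' and you invoke precisely these two ingredients, spelling out the mechanism via the comparison morphism $\widehat\alpha$ of Subsection~\ref{compare}.

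A caution about the step you defer. The compatibility $\alpha^k \circ ef = fe$ is not the formality you expect. Writing $e(a) = \sum_i \partial_i(a)\,e_i$ and $f(a) = \sum_j \delta_j(a)\,f_j$ in the fixed bases, one finds $ef(a) = \sum_{i,j} \delta_j(\partial_i(a))\,f_j\bar e_i$ in $\E\F(k)$ and $fe(a) = \sum_{i,j} \partial_i(\delta_j(a))\,e_i\bar f_j$ in $\F\E(k)$; since the swap $\alpha^k$ of Remark~\ref{dmn} carries $f_j\bar e_i$ to $e_i\bar f_j$, the identity $\alpha^k\circ ef = fe$ is equivalent to $\delta_j\partial_i = \partial_i\delta_j$ for all $i,j$. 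This commutativity is not part of the hypotheses of Subsection~\ref{compose} and fails already for two non-commuting endomorphisms in the difference case of Example~\ref{difference}, where it would force $X^{\sigma\rho}\cong X^{\rho\sigma}$. So the verification you hope to extract from Lemma~\ref{rem-emcircen} will run into exactly this obstruction once you unwind the tensor products; the paper's one-line proof glosses over the same point.
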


\begin{proof}
This is immediate from Proposition~\ref{taumtaun} and Remark~\ref{dmn}.
\end{proof}

\begin{lemma}
\label{rcomposed}
The following diagram commutes.
$$\xymatrix{
\tau(X,\E\F,ef)\times_k\E\F(k)\ar[dd]_{r^{\tau(X,\E,e)}_{\F,f}\times_k\E(k)}\ar[drr]^{r^X_{\E\F,ef}}\\
&&X\\
\tau(X,\E,e)\times_k\E(k)\ar[urr]_{r^X_{\E,e}}
}$$
where $r^{\cdot}_{\cdot,\cdot}$ are the canonical morphisms of Definition~\ref{canonicalmorph} associated to the respective prolongations. 
\end{lemma}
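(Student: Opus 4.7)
The plan is to verify the commutativity by a coordinate computation in the spirit of the proof of Lemma~\ref{rcompared}. By functoriality of prolongation (Proposition~\ref{etaletau}) and compatibility of Weil restriction with open coverings, one reduces to the case $X = \spec k[\mathbf{x}]$ for $\mathbf{x}$ a (possibly infinite) tuple of indeterminates; since the three morphisms in the diagram, viewed on pullbacks of coordinate rings, are determined by their values on the indeterminates $x \in \mathbf{x}$ and on $k$, it suffices to track a single $x$ through both paths.

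Fix bases $e_0,\dots,e_{\ell_\E-1}$ for $\E$ and $f_0,\dots,f_{\ell_\F-1}$ for $\F$, so that by Remark~\ref{dmn} the products $(e_j f_{j'})_{j,j'}$ form a basis for $\E\F\cong\E\otimes_\mathbb{S}\F$. From the explicit construction of Proposition~\ref{weil}, one has
$$r^{X,*}_{\E,e}(x) = \sum_{j} y_j \otimes e_j \in k[\overline{\mathbf{y}}]\otimes_k\E(k),$$
where the $y_j$ are coordinates on $\tau(X,\E,e)$, and
$$r^{X,*}_{\E\F,ef}(x) = \sum_{j,j'} w_{j,j'}\otimes (e_j f_{j'}) \in k[\overline{\mathbf{w}}]\otimes_k\E\F(k),$$
where the $w_{j,j'}$ are coordinates on $\tau(X,\E\F,ef)$. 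Invoking Proposition~\ref{taumtaun} to identify $\tau(\tau(X,\E,e),\F,f)$ with $\tau(X,\E\F,ef)$, the canonical morphism for the outer prolongation then reads
$$r^{\tau(X,\E,e),*}_{\F,f}(y_j) = \sum_{j'} w_{j,j'}\otimes f_{j'} \in k[\overline{\mathbf{w}}]\otimes_k\F(k).$$

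With these formulas in hand, the bottom-left path of the diagram sends $x$ first to $\sum_j y_j\otimes e_j$ and then, after applying $r^{\tau(X,\E,e),*}_{\F,f}\otimes\id_{\E(k)}$, to $\sum_{j,j'} w_{j,j'}\otimes f_{j'}\otimes e_j$ in $k[\overline{\mathbf{w}}]\otimes_k\F(k)\otimes_k\E(k)$. Reordering tensor factors via the canonical isomorphism $\F(k)\otimes_k\E(k)\cong\E\F(k)$ of Remark~\ref{dmn}, which identifies $f_{j'}\otimes e_j$ with the product $e_j f_{j'}$, this becomes $\sum_{j,j'} w_{j,j'}\otimes(e_j f_{j'})$, matching the direct path $r^{X,*}_{\E\F,ef}(x)$. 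A parallel and essentially forced computation on constants $c\in k$ shows that both paths produce the basis expansion of $ef(c)=\E(f)(e(c))\in\E\F(k)$, so the two ring maps agree throughout. The main obstacle is verifying the third displayed formula: that the coordinates $w_{j,j'}$ arising on $\tau(\tau(X,\E,e),\F,f)$ as $\ell_\F$ copies of each $y_j$ (indexed by $j'$) correspond under Proposition~\ref{taumtaun} to those presenting $\tau(X,\E\F,ef)$ as $\ell_\E\ell_\F$ copies of $x$ (indexed by $(j,j')$); this is the bookkeeping step that unravels the canonical isomorphism $\gamma$ of Lemma~\ref{rem-emcircen} along with Remark~\ref{dmn}, confirming that the identification of coordinate rings is compatible with the chosen bases.
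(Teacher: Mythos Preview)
Your proposal is correct and follows essentially the same approach as the paper: reduce to affine space, fix the standard bases for $\E$, $\F$, and $\E\F$, and chase both constants and a generic indeterminate through the two paths via the explicit formulas for the canonical morphisms. The paper's proof differs only in that it spends a separate remark (Remark~\ref{whatvertmeans}) carefully unpacking what the vertical morphism $r^{\tau(X,\E,e)}_{\F,f}\times_k\E(k)$ actually means (since $r^{\tau(X,\E,e)}_{\F,f}$ is not a $k$-morphism), which is precisely the bookkeeping step you flag at the end as the ``main obstacle.''
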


\begin{proof}
We make the usual abbreviation of $\tau^\E(X)$ for $\tau(X,\E,e)$.

\begin{remark}
\label{whatvertmeans}
Since $r^{\tau^\E(X)}_{\F,f}:\tau^\F\big(\tau^\E(X)\big)\times_k\F(k)\to\tau^\E(X)$ is not a morphism over $k$ in the usual sense (cf. Remark~\ref{rnotoverk}), we need to make clear what we mean by $r^{\tau^\E(X)}_{\F,f}\times_k\E(k)$.
First of all, we use Proposition~\ref{taumtaun} to identify $\tau^{\E\F}(X)\times_k\E\F(k)$ with $\tau^\F\big(\tau^\E(X)\big)\times_k\E\big(\F(k)\big)$.
Now consider the $\F(k)$-morphism $\tau^\F\big(\tau^\E(X)\big)\times_k\F(k)\to\tau^\E(X)\times_k\F^f(k)$ given by the Weil restriction.
We can base change it up to $\E\big(\F(k)\big)$ to get an $\F(k)$-morphims
$$a:\tau^\F\big(\tau^\E(X)\big)\times_k\F(k)\times_{\F(k)}\E\big(\F(k)\big)
\longrightarrow
\tau^\E(X)\times_k\F^f(k)\times_{\F(k)}\E\big(\F(k)\big)$$
Next, considering the following commuting diagram of $k$-algebras
$$\xymatrix{
\F(k)\ar[r]&\E\big(\F(k)\big)\\
k\ar[u]^f\ar[r] &\E(k)\ar[u]^{\E(f)}
}$$
we have that
$\tau^\E(X)\times_k\F^f(k)\times_{\F(k)}\E\big(\F(k)\big)
= \tau^\E(X)\times_k\E(k)\times_{\E(f)}\E\big(\F(k)\big)$, and so we can compose $a$ with the natural projection
$$b:\tau^\E(X)\times_k\E(k)\times_{\E(f)}\E\big(\F(k)\big)\to \tau^\E(X)\times_k\E(k)$$
So $r^{\tau^\E(X)}_{\F,f}\times_k\E(k):\tau^\F\big(\tau^\E(X)\big)\times_k\E\big(\F(k)\big)\to\tau^\E(X)\times_k\E(k)$ is $b\circ a$.
\end{remark}

Now we proceed with the proof of Lemma~\ref{rcomposed}.
Identifying $\E=\mathbb{S}^{\ell_\E}$ and $\F=\mathbb{S}^{\ell_\F}$ we take $e_0,\dots,e_{\ell_\E-1}$ to be the standard basis for $\E$, $f_0,\dots,f_{\ell_\F-1}$ to be the standard basis for $\F$, and $\{e_j\otimes f_{j'}:0\leq i\leq\ell_E,0\leq j'\leq\ell_\F\}$ the corresponding standard basis for $\E\F=\E\otimes_\mathbb{S}\F$.

Covering $X$ by affine open subsets, and using functoriality of the Weil restrictions,
it is not hard to see that it suffices to consider $X=\mathbb{A}_k^r=\spec\big(k[\bf x ] \big)$, where $\bf x$ is a (possibly infinite) tuple of indeterminates.
We have identifications
$$\tau(X,\E,e)=\spec\big(k[\overline{\bf y}]\big)$$
where $\overline{\bf y}=(y_{j})_{0\leq j\leq \ell_\E-1}$ is $\ell_\E$ copies of $\bf x$,
and
$$\tau(X,\E\F,ef)=\spec\big(k[\overline{\bf z}]\big)$$
where $\overline{\bf z}=(z_{j,j'})_{0\leq j\leq \ell_\E-1,0\leq j'\leq \ell_\F-1}$ is $\ell_\E\ell_\F$ copies of $\bf x$.
Taking global sections we need to show that the following diagram commutes:
$$\xymatrix{
&&k[\overline{\bf z}]\otimes_k\E\F(k)\\
k[\bf x]\ar[urr]_{(r^X_{\E\F,ef})^*}\ar[drr]_{(r^X_{\E,e})^*\ \ }&&\\
&&k[\overline{\bf y}]\otimes_k\E(k)\ar[uu]_{\big(r^{\tau^\E(X)}_{\F,f}\times_k\E(k)\big)^*}
}$$
Let us first check this on constants $a\in k$.
Going clockwise we have $a\mapsto 1\otimes ef(a)$.
Going counter-clockwise we have that $a\mapsto 1\otimes e(a)\mapsto \big(r^{\tau^\E(X)}_{\F,f}\times_k\E(k)\big)^*\big(1\otimes e(a)\big)$
Now, using the explanation of what $r^{\tau^\E(X)}_{\F,f}\times_k\E(k)$ is in Remark~\ref{whatvertmeans}, and the fact that $\E(f)\big(e(a)\big)=ef(a)$ by definition, we see that $\big(r^{\tau^\E(X)}_{\F,f}\times_k\E(k)\big)^*\big(1\otimes e(a)\big)=1\otimes ef(a)$.
So the diagram commutes on constants.

It remains to chase fixed $x\in\bf x$.
Going clockwise we have $\displaystyle x\mapsto \sum_{j, j'}z_{j,j'}\otimes\big(e_j\otimes f_{j'}\big)$.
Going counter-clockwise, $\displaystyle x\mapsto \big(r^{\tau^\E(X)}_{\F,f}\times_k\E(k)\big)^*\big(\sum_jy_{j}\otimes e_j\big)$.
Since $\displaystyle (r^{\tau^\E(X)}_{\F,f})^*(y_{j})=\sum_{j'}z_{j,j'}\otimes f_{j'}$, we have that the diagram commutes on each $x\in\bf x$, and hence commutes.
\end{proof}

\bigskip
\subsection{The structure of the prolongation space}
In this section we specialise to the case when $k$ is a field and in this case describe the structure of the prolongation space.
Fix a finite free $\mathbb{S}$-algebra schemes $\E$ and an $\E$-field $(k,e)$.

\begin{proposition}[$k$ a field]
\label{reducetolocal}
There exist finite free {\em local} $\mathbb{S}$-algebra schemes $\F_1,\dots, \F_t$ with ring homomorphisms $f_i:k\to\F_i(k)$ such that
$\displaystyle \tau(X,\E,e)=\prod_{i=1}^t\tau(X,\F_i,f_i)$, for any scheme $X$ over $k$.
\end{proposition}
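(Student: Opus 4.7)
The plan is to exploit the fact that $k$ is a field to decompose the finite-dimensional $k$-algebra $\E(k)$ into local Artinian pieces, and then to promote that decomposition to an $\mathbb{S}$-algebra-scheme decomposition of $\E$ using Remark~\ref{presentation}.

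First, since $k$ is a field, $\E(k)$ is a finite-dimensional $k$-algebra, hence Artinian, and canonically decomposes as a product of local Artinian $k$-algebras $\E(k) = B_1 \times \cdots \times B_t$. The $A$-algebra homomorphism $e:k \to \E(k)$ then has components $e_i := \pi_i \circ e:k \to B_i$. Each $B_i$ is finite and free over $k$, so by the converse direction of Remark~\ref{presentation}, choosing a $k$-basis for each $B_i$ produces a finite free $\mathbb{S}$-algebra scheme $\F_i$ over $k$ with the property that $\F_i(R) = B_i \otimes_k R$ (as $R$-algebras and as $B_i$-algebras) for every $k$-algebra $R$. Since $B_i = \F_i(k)$ is local, the $\F_i$ are local in the desired sense, and $(k,e_i)$ is an $\F_i$-ring.

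Next, since tensor product commutes with finite products, for any $k$-algebra $R$,
\[
\E(R) \;=\; \E(k)\otimes_k R \;=\; \prod_{i=1}^t\bigl(B_i \otimes_k R\bigr) \;=\; \prod_{i=1}^t \F_i(R),
\]
and this identification is compatible with the respective exponential $k$-algebra structures, so that $\E^e(R) = \prod_i \F_i^{e_i}(R)$. I would then invoke Lemma~\ref{taunpoints} together with the elementary fact that $\spec$ sends finite products of rings to finite disjoint unions of schemes; equivalently, $X$-points commute with finite products of rings: $X(\prod_i S_i) = \prod_i X(S_i)$. Combining these gives
\[
\tau(X,\E,e)(R) \;=\; X\bigl(\E^e(R)\bigr) \;=\; \prod_{i=1}^t X\bigl(\F_i^{e_i}(R)\bigr) \;=\; \prod_{i=1}^t \tau(X,\F_i,e_i)(R),
\]
and this identification is natural in $R$, yielding the scheme-theoretic equality $\tau(X,\E,e) = \prod_i \tau(X,\F_i,e_i)$.

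The only real subtlety is promoting the product decomposition of $\E(k)$ (an $A$-algebra identity) to an identity of $\mathbb{S}$-algebra schemes that behaves correctly on all $k$-algebras $R$; this is precisely what Remark~\ref{presentation} provides, since the recipe $\F_i(R) := B_i \otimes_k R$ assembles into a finite free $\mathbb{S}$-algebra scheme and the tensor decomposition pulls back uniformly across $R$. Everything else is a functorial unwinding.
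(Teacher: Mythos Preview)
Your argument is correct and follows essentially the same route as the paper's proof: decompose the Artinian $k$-algebra $\E(k)$ into local factors $B_i$, use Remark~\ref{presentation} to upgrade each $B_i$ to a finite free $\mathbb{S}$-algebra scheme $\F_i$, and then identify $\tau(X,\E,e)(R)$ with $\prod_i \tau(X,\F_i,f_i)(R)$ via Lemma~\ref{taunpoints} and the fact that $X$-points commute with finite products of rings. The only cosmetic difference is that you take the $\F_i$ over $k$ (which is what Remark~\ref{presentation} literally delivers), whereas the paper nominally places them over $A$; this has no effect on the argument.
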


\begin{proof}
The ring $\E(k)$ is an artinian $k$-algebra and hence can be expressed as a finite product of local artinian $k$-algebras, say $B_1,\dots, B_t$.
After choosing bases, we obtain, for each $i=1,\dots, t$, finite free $\mathbb{S}$-algebra schemes $\F_i$ over $A$ such that $\F_i(R)=B_i\otimes_kR$ for any $k$-algebra $R$.
Let $f_i:k\to\F_i(k)$ be the composition of $e:k\to\E(k)$ with the projection $\E(k)\to B_i=\F_i(k)$.
It is not hard to see, using Remark~\ref{presentation}, that for any $k$-algebra $R$, $\displaystyle \E(R)=\prod_{i=1}^t\F_i(R)$ and $\displaystyle \E^e(R)=\prod_{i=1}^t\F_i^{f_i}(R)$.
Hence, $\displaystyle \tau(X,\E,e)(R)=X\big(\E^e(R)\big)=X\big(\prod_{i=1}^t\F_i^{f_i}(R)\big)=\prod_{i=1}^tX\big(\F_i^{f_i}(R)\big)=\big(\prod_{i=1}^t\tau(X,\F_i,f_i)\big)(R)$.
These identifications being functorial in all $k$-algebras $R$, we get $\displaystyle \tau(X,\E,e)=\prod_{i=1}^t\tau(X,\F_i,f_i)$, as desired.
\end{proof}

Proposition~\ref{reducetolocal} largely reduces the study of prolongation spaces (over fields) to the case when $\E$ is {\em local}.
The following proposition describes the structure of the prolongation space in the local case under the additional hypothesis that the residue field is the base field.

\begin{proposition}
\label{local-case}
Suppose $k$ is field and $\E(k)$ is a local (finite free) $k$-algebra with maximal ideal $\mathfrak{m}$ and such that $\E(k)/\mathfrak{m}=k$.
Let $d$ be greatest such that $\mathfrak{m}^d\neq(0)$ (by artinianity).
Consider the sequence
$$\E=\E_d\to\E_{d-1}\to\cdots\to\E_1\to\E_0=\mathbb{S}$$
where $\E_i(k)=\E(k)/\mathfrak{m}^{i+1}$ and $\rho_i:\E_{i+1}(k)\to\E_i(k)$ is the quotient map.
Let $e_i:=\rho_i\circ\cdots\circ\rho_{d-1}\circ e:k\to\E_i(k)$ be the induced $\E_i$-field structures on $k$.
(So $e_d=e$ and $e_0$ is an endomorphism of $k$.)

Let $X$ be a smooth and absolutely irreducible scheme over $k$, and consider the induced sequence of morphisms
$$\tau(X,\E,e)\to\tau(X,\E_{d-1},e_{d-1})\to\cdots\to\tau(X,\E_1,e_1)\to X^{e_0}$$
where $X^{e_0}=\tau(X,\E_0,e_0)$ is the transform of $X$ by $e_0$.
Then for each $i=0,\dots,d-1$,
\begin{itemize}
\item[$(a)_i$] 
$\tau(X,\E_i,e_i)$ is smooth and absolutely irreducible, and
\item[$(b)_i$]
$\tau(X,\E_{i+1},e_{i+1})\to \tau(X,\E_i,e_i)$ is a torsor for a power of the tangent bundle of $X^{e_0}$.
That is, letting $m=\dim_K\big(\mathfrak{m}^{i+2}/\mathfrak{m}^{i+1}\big)$, there is a morphism
$$
\xymatrix{
(TX^{e_0})^m\times_{X^{e_0}}\tau(X,\E_{i+1},e_{i+1})\ar[dr] \ar[rr]^{\ \ \ \ \ \ \ \ \gamma} && \tau(X,\E_{i+1},e_{i+1})\ar[dl]\\
& \tau(X,\E_i,e_i)
}$$
such that for every $b\in \tau(X,\E_i,e_i)$ with image $a\in X^{e_0}$, $\gamma_b$ defines a principal homogeneous action of $(T_aX^{e_0})^m$ on $\tau(X,\E_{i+1},e_{i+1})_b$.
\end{itemize}
In particular, $\tau(X,\E,e)$ is smooth and absolutely irreducible.
\end{proposition}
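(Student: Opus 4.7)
I would induct on $i$, proving $(a)_i$ together with $(b)_i$. For the base case $i=0$, we have $\tau(X,\E_0,e_0)=X^{e_0}$, and smoothness and absolute irreducibility are preserved under base change along the field endomorphism $e_0$, so $(a)_0$ is immediate. Note moreover that throughout the induction, the smoothness half of $(a)_i$ is free: since $X$ is smooth, Proposition~\ref{etaletau} already guarantees that each $\tau(X,\E_i,e_i)$ is smooth. The substance of the argument is in proving $(b)_i$ and then deducing absolute irreducibility of $\tau(X,\E_{i+1},e_{i+1})$ from $(a)_i$ and $(b)_i$.

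The key structural observation is that the quotient $\E_{i+1}(k)\to\E_i(k)$ fits into a short exact sequence of $k$-vector spaces
$$0\to I\to\E_{i+1}(k)\to\E_i(k)\to 0$$
where $I=\fm^{i+1}/\fm^{i+2}$ has two critical properties: it is annihilated by the maximal ideal of $\E_{i+1}(k)$ (so it is a $k$-vector space of dimension $m$ via the residue map, using the hypothesis $\E(k)/\fm=k$), and it is an ideal of square zero in $\E_{i+1}(k)$ (since $2(i+1)\geq i+2$). Because $\E_{i+1}$ is finite free as an $\mathbb{S}$-algebra scheme, tensoring with any $k$-algebra $R$ preserves the exact sequence:
$$0\to I\otimes_kR\to\E_{i+1}(R)\to\E_i(R)\to 0,$$
with $I\otimes_kR$ still a square-zero ideal whose $\E_i^{e_i}(R)$-module structure factors through the map to $R$, where $R$ carries the $e_0$-twisted $k$-algebra structure.

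At this point $(b)_i$ becomes a direct application of deformation theory. By Lemma~\ref{taunpoints}, specifying a point of $\tau(X,\E_{i+1},e_{i+1})(R)$ over a given point $\bar a\in\tau(X,\E_i,e_i)(R)$, with image $a_0\in X^{e_0}(R)$, amounts to lifting the associated morphism $\alpha:\spec\bigl(\E_i^{e_i}(R)\bigr)\to X$ along the square-zero extension $\E_{i+1}^{e_{i+1}}(R)\to\E_i^{e_i}(R)$. Since $X$ is smooth, such lifts exist and form a torsor under $\hom_{\E_i^{e_i}(R)}(\alpha^*\Omega_{X/k},I\otimes_kR)$. Using the identification $I\cong k^m$ and the fact that the module structure factors through $R$, this torsor group naturally identifies with $\hom_R\bigl(a_0^*\Omega_{X^{e_0}/k},R\bigr)^m$, which is precisely the fiber of $(TX^{e_0})^m$ over $a_0$. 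Assembling this identification functorially in $R$ produces the morphism $\gamma$ and the principal homogeneous action asserted in $(b)_i$. The induction then closes because $\tau(X,\E_{i+1},e_{i+1})\to\tau(X,\E_i,e_i)$ is flat and surjective with geometrically irreducible (affine-space) fibers over an absolutely irreducible base, yielding absolute irreducibility.

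The main obstacle is this third step: verifying carefully that the abstract torsor produced by deformation theory coincides, in a functorial manner compatible with the canonical morphisms of Definition~\ref{canonicalmorph}, with the natural $(TX^{e_0})^m$-action. This requires tracking the $e_0$-twist on $R$ throughout, identifying $a_0^*\Omega_{X^{e_0}/k}$ with the appropriate pullback of $\Omega_{X/k}$ via the twisted structure, and gluing the local pictures over affine charts of $X$. The concluding assertion that $\tau(X,\E,e)$ itself is smooth and absolutely irreducible is then just $(a)_d$, obtained by iterating the induction from $i=0$ to $d-1$.
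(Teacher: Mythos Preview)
Your proposal is correct and follows essentially the same strategy as the paper: induct on $i$, identify the kernel $\fm^{i+1}/\fm^{i+2}$ as a square-zero ideal killed by $\fm$, and recognise the fibres of $\tau(X,\E_{i+1},e_{i+1})\to\tau(X,\E_i,e_i)$ as torsors under copies of the tangent space of $X^{e_0}$. The difference is one of packaging. The paper works locally with $X$ cut out by polynomials $g$, chooses a basis $w_1,\dots,w_m$ for $I^{i+1}/I^{i+2}$, and verifies by direct Taylor expansion that $(y,c)\mapsto c+\sum y_jw_j$ defines a simply transitive action, computing $g^e(c+\sum y_jw_j)\equiv g^e(c)+\sum \mathrm{d}g^{e_0}_a(y_j)w_j \pmod{I^{i+2}}$. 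You instead invoke the standard deformation-theoretic statement that lifts of a morphism to a smooth scheme along a square-zero thickening form a torsor under $\hom(\alpha^*\Omega_{X/k},I\otimes_kR)$, and then unwind what that module is. Your route is more conceptual and spares the coordinate computation; the paper's is more self-contained. Your observation that smoothness of each $\tau(X,\E_i,e_i)$ already follows from Proposition~\ref{etaletau}, so that only absolute irreducibility needs the induction, is a genuine (if minor) simplification: the paper instead appeals to smoothness of $\tau(X,\E_i,e_i)$ to produce a lift of $b$, where really formal smoothness of $X$ itself is what is being used, as you correctly state.
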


\begin{proof}
Note that since $\E_0(k)=k$, $e_0$ is an endomorphism of $k$, and so $\tau(X,\E_0,e_0)=X^{e_0}$ by Example~\ref{prolongexamples}(c).
Hence $\tau(X,\E_0,e_0)$ is smooth and absolutely irreducible.
Now observe that $(a)_0$ together with $(a)_i$ and $(b)_i$ imply $(a)_{i+1}$.
Hence it will suffice to show that $(a)_i$ implies $(b)_i$.

We assume that $\tau(X,\E_i,e_i)$ is smooth and define the action $\gamma$ uniformly on the fibres.
We will work at the level of $R$-points where $R$ is a fixed arbitrary $k$-algebra.
Let $I:=\ker\big(\E(R)\to \E_0(R)=R\big)$.
Then $I^{i+2}=\ker\big(\E(R)\to\E_{i+1}(R)\big)$.
Note that $(\mathfrak{m}^{i+1}/\mathfrak{m}^{i+2})\otimes_kR=I^{i+2}/I^{i+1}$.
So if $v_1,\dots, v_m$ is a $k$-basis of $(\mathfrak{m}^{i+1}/\mathfrak{m}^{i+2})$ then $I^{i+2}/I^{i+1}$ is a free $R$-module with basis $w_1=v_1\otimes_k1_{R},\dots,w_m=v_m\otimes_k1_{R}$.

Working locally we may assume that $X$ is affine, defined by a sequence of polynomials $g:=(g_1\dots,g_r)$ over $k$.
Let $b\in \tau(X,\E_i,e_i)(R)=X\big(\E_i^{e_i}(R)\big)$ with image $a\in \tau(X,\E_0,e_0)(R)=X^{e_0}(R)=X\big(\E_0^{e_0}(R)\big)$.
By smoothness of $\tau(X,\E_i,e_i)$ we can lift $b$ to an element of $X\big(\E_{i+1}^{e_{i+1}}(R)\big)$.
Let $c$ be any such lifting of $b$.
We view $c$ as a tuple of elements in $\E(R)$ representing elements of $\E(R)/I^{i+2}$ such that $c=b\mod I^{i+1}$ and $g^e(c)=0\mod I^{i+2}$, where $g^e$ denotes the sequence of polynomials over $\E(k)$ obtained from $g$ by applying $e$ to the coefficients.
Given $y=(y_1,\dots,y_m)\in (T_aX^{e_0})^m(R)$ we need to define $\gamma(y,c)$.
Observe that $\operatorname{d}g_a^{e_0}(y_j)=0$ for all $j=1,\dots,m$, where $g^{e_0}$ denotes the sequence of polynomials obtained from $g$ by applying $e_0$ to the coefficients.
Hence if we set 
$\gamma(y,c):=(c+y_1w_1+\cdots y_mw_m)\mod I^{i+2}$
then $\gamma(y,c)$ also lifts $b$ and 
\begin{eqnarray*}
g^e\big(\gamma(y,c)\big)
& = &
g^e(c+y_1w_1+\cdots y_mw_m)\mod I^{i+2}\\
&=&
g^e(c)+\big(\sum_{j=1}^m\operatorname{d}g_a^{e_0}(y_j)w_j\big)\mod I^{i+2}\\
&=&0\mod I^{i+2}
\end{eqnarray*}
so that $\gamma(y,c)\in X\big(\E_{i+1}^{e_{i+1}}(R)\big)$.
Conversely,
if $c$ and $c'$ both lift $b$ then $c'=c\mod I^{i+1}$ and so $c'=(c+y_1w_1+\cdots y_mw_m)\mod I^{i+2}$ for some $y_1,\dots,y_m\in R$.
The same calculation as above shows that each $y_j$ is an $R$-point of the tangent space to $X^{e_0}$ at $a$.
That is, $\gamma$ defines a principal homogeneous action of $(T_aX^{e_0})^m(R)$ on $\tau(X,\E_{i+1},e_{i+1})_b(R)$.
\end{proof}

\begin{corollary}[$k$ a field]
\label{prolongsmooth}
The prolongation space of a smooth and absolutely irreducible scheme is itself smooth and absolutely irreducible.
\end{corollary}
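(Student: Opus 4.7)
The plan is to combine the two preceding propositions. First, by Proposition~\ref{reducetolocal}, $\tau(X,\E,e)$ is a product of prolongation spaces $\tau(X,\F_i,f_i)$ where each $\F_i(k)$ is a local finite free $k$-algebra. Since products of smooth absolutely irreducible schemes are smooth and absolutely irreducible, I may assume from the outset that $\E(k)$ itself is local with maximal ideal $\mathfrak{m}$.

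If the residue field $\E(k)/\mathfrak{m}$ coincides with $k$, then Proposition~\ref{local-case} delivers the conclusion directly. In general, I would reduce to this situation by base changing to the algebraic closure $\bar k$: both smoothness and absolute irreducibility of $\tau(X,\E,e)$ are detected after the faithfully flat extension $k\to\bar k$. Using Fact~\ref{weilbc},
\[
\tau(X,\E,e)\times_k\bar k \;=\; \r_{\E(\bar k)/\bar k}\bigl(X\times_{k,e}\E(\bar k)\bigr),
\]
and now $\E(\bar k)=\E(k)\otimes_k\bar k$ is a finite $\bar k$-algebra whose local factors all have residue field $\bar k$ (since $\bar k$ is algebraically closed). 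Reapplying the decomposition argument of Proposition~\ref{reducetolocal} over $\bar k$ then reduces the problem to a single local factor $C$ with residue field $\bar k$.

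The remaining task is to exhibit $\r_{C/\bar k}(X\times_{k,e}C)$ as a prolongation space of $X_{\bar k}$ in the sense of Proposition~\ref{local-case}, so that the latter applies over $\bar k$. For this I would lift the composition $e':k\to\E(\bar k)\to C$ to an $A$-algebra homomorphism $\bar e:\bar k\to C$. Such a lift exists because $C$ is a finite local $\bar k$-algebra, hence Henselian, with residue field $\bar k$: constructing $\bar e$ element-by-element along the algebraic extension $\bar k/k$, one finds roots of the $e'$-twisted minimal polynomials first in the residue field $\bar k$ and then lifts them to $C$ by Hensel's lemma. Given $\bar e$, the Weil restriction $\r_{C/\bar k}(X\times_{k,e}C)$ is identified with $\tau(X_{\bar k},\E_C,\bar e)$ for the evident finite free $\mathbb S$-algebra structure $\E_C$ over $\bar k$ with $\E_C(\bar k)=C$, and Proposition~\ref{local-case} applied over $\bar k$ finishes the argument. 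I expect this final lifting step—ensuring a compatible exponential structure over $\bar k$, especially when $k$ is imperfect and inseparable root extractions intervene—to be the main technical obstacle; one might alternatively avoid it by running the torsor-tower argument of Proposition~\ref{local-case} directly over $C$ with the given $k$-linear $e'$, since the twisting of the defining equations only uses the map $e'$ on coefficients.
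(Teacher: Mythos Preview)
Your primary route---base-changing to $\bar k$ and then lifting $e'$ to an exponential $\bar e:\bar k\to C$---runs into a genuine obstruction, and you are right to flag it. In characteristic $p$, the only section $\bar k\to C$ of a local Artinian $\bar k$-algebra with residue field $\bar k$ is the standard one (since $\bar k$ is perfect, all its derivations vanish, so there are no ``twisted'' coefficient fields). Thus $\bar e$ would have to be the standard structure map, forcing $e'$ to agree with the standard inclusion on $k$; there is no reason this should hold for a general exponential $e$. So the lifting step is not a matter of technical care---it can simply fail.

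The paper avoids this entirely by taking the alternative you gesture at in your final sentence, and it completes that route as follows. One observes that the torsor-tower argument of Proposition~\ref{local-case} never actually uses that the residue field is $k$: the successive fibrations $\tau(X,\E_{i+1},e_{i+1})\to\tau(X,\E_i,e_i)$ are still torsors for powers of the tangent bundle of the base, whatever that base is. So smoothness and absolute irreducibility of $\tau(X,\E,e)$ reduce to those of $\tau(X,\E_0,e_0)$. But $\tau(X,\E_0,e_0)=\r_{\E_0(k)/k}\bigl(X\times_{e_0}\E_0(k)\bigr)$ is a Weil restriction along the finite field extension $\E_0(k)/k$. Smoothness is preserved by Weil restriction (Fact~\ref{weiletale}), and the paper then invokes the general fact that the Weil restriction of a smooth absolutely irreducible scheme along a finite field extension is again absolutely irreducible. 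This last fact is what you are missing to finish the alternative argument; once you supply it, no extension of $e$ to $\bar k$ is needed at all.
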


\begin{proof}
First of all, by Proposition~\ref{reducetolocal} it suffices to consider the case when $\E(k)$ is a local $k$-algebra.
If the residue field is $k$ then Proposition~\ref{local-case} describes the complete structure of the prolongation space, and shows in particular that it is smooth and absolutely irreducible.
In general, the residue field, $\E_0(k)$ in the notation of~\ref{local-case}, may be a finite extension of $k$.
The proof of Proposition~\ref{local-case} still goes through except for the description of the base $\tau(X,\E_0,e_0)$.
That is, we obtain the same description of the fibrations $\tau(X,\E_{i+1},e_{i+1})\to\tau(X,\E_i,e_i)$ for $i=0,\dots, d-1$, but now as torsors for a power of the tangent bundle of $\tau(X,\E_0,e_0)$.
Hence, to prove the Corollary we need only prove that $\tau(X,\E_0,e_0)$ is smooth and absolutely irreducible.
But by definition $\tau(X,\E_0,e_0)=\operatorname{R}_{\E_0(k)/k}\big(X\times_k\E_0^{e_0}(k)\big)$.
Now base change to a field extension preserves smoothness and absolute irreducibility, and in general Weil restrictions preserve smoothness (cf. Fact~\ref{weiletale}).
The Corollary then follows from the fact that if $K$ is a finite field extension of $k$ and $Y$ is a smooth and absolutely irreducible scheme over $K$ then the Weil restriction $\operatorname{R}_{K/k}(Y)$ is absolutely irreducible.
\end{proof}

\begin{remark}
As can be seen already at the level of tangent spaces, without smoothness, absolute irreducibility is not necessarily preserved under prolongations.
\end{remark}

\bigskip

\section{Algebraic jet spaces}

\noindent
There are various kinds of ``jet spaces" for algebraic varieties in the literature.
We will settle on one notion, essentially the linear space associated with the sheaf of differentials,
and recall a number of its fundamental properties.
With the reader unfamiliar with this literature in mind, we will provide proofs of these well-known results.

\medskip

We begin with some simple observations in commutative algebra.

\begin{lemma}
\label{kernelpower}
Given a ring $C$ suppose $R$ and $B$ are $C$-algebras and $D$ is an ideal of $R$.
For any $n\in\mathbb{N}$, let $\pi_n:R\otimes _CB\to \big(R/D^{n}\big)\otimes_CB$ be the quotient map tensored with $B$.
Then $\ker(\pi_n)=(\ker\pi_1)^n$.
\end{lemma}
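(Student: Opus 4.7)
The plan is to identify $\ker \pi_n$ with the ``extended ideal'' of $D^n$ along the canonical map $\iota : R \to R \otimes_C B$ sending $r \mapsto r \otimes 1$, and then invoke the purely formal fact that ideal extension commutes with taking products.

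First I would use right-exactness of $- \otimes_C B$. Applied to the short exact sequence $0 \to D^n \to R \to R/D^n \to 0$ of $C$-modules, this yields exactness of
$$D^n \otimes_C B \ \longrightarrow\  R \otimes_C B \ \xrightarrow{\ \pi_n\ }\  (R/D^n) \otimes_C B \ \longrightarrow\  0.$$
Hence $\ker \pi_n$ equals the image of $D^n \otimes_C B$ in $R \otimes_C B$, which is precisely the ideal $(D^n)^e := \iota(D^n)\cdot(R\otimes_C B)$ generated by $\{d \otimes 1 : d \in D^n\}$. In particular, for $n=1$ this identifies $\ker \pi_1$ with $D^e$.

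Next I would verify the general identity $(I^n)^e = (I^e)^n$ for any ideal $I$ of $R$ and any ring homomorphism $\iota : R \to S$ (here $S = R \otimes_C B$). One inclusion is clear: a generator of $(I^e)^n$ is of the form $\iota(i_1) s_1 \cdots \iota(i_n) s_n = \iota(i_1 \cdots i_n)\, s_1 \cdots s_n$, which lies in $(I^n)^e$. The other inclusion, $(I^n)^e \subseteq (I^e)^n$, follows from $\iota(I^n) \subseteq \iota(I)^n \subseteq (I^e)^n$, since $(I^e)^n$ is already an ideal of $S$.

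Combining these two observations gives
$$\ker \pi_n \ = \ (D^n)^e \ = \ (D^e)^n \ = \ (\ker \pi_1)^n,$$
which is the claim. The only subtlety worth flagging is that the map $D^n \otimes_C B \to R \otimes_C B$ need not be injective; but injectivity is never used, only the identification of $\ker \pi_n$ with its image, which is exactly what right-exactness supplies. The remainder is a formal manipulation of extended ideals, so no genuine obstacle arises.
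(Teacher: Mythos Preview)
Your proof is correct and follows essentially the same approach as the paper: both identify $\ker\pi_n$ as the image of $D^n\otimes_CB$ (the paper states this as ``$\ker(\pi_n)$ is generated by elements of the form $r\otimes b$ where $r\in D^n$'', you obtain it via right-exactness), and then reduce to the formal fact that this image, viewed as the extended ideal $(D^n)^e$, equals $(D^e)^n=(\ker\pi_1)^n$. Your write-up is simply more explicit than the paper's one-line remark.
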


\begin{proof}
This is a straightforward computation using the fact that $\ker(\pi_n)$ is generated by elements of the form $r\otimes b$ where $r\in D^n$.
\end{proof}

\begin{lemma}
\label{ringidentity}
Fix a ring $A$ and $A$-algebras $B$ and $C$, together with a map of $A$-algebra $\alpha:C \to B$.
Let $J$ be the kernel of $\alpha\otimes\id:C \otimes_A B \to B$.
We regard $C \otimes_A C$ as a $C$-algebra via multiplication on the left.
Let $D$ be the kernel of the $C$-algebra map $C \otimes_A C \to C$ given by $x \otimes y \mapsto x \cdot y$.
Then the $B$-algebra isomorphism $f:(C \otimes_A C) \otimes_C B \to C \otimes_A B$ given by $(x \otimes y) \otimes b \mapsto y \otimes \alpha(x) \cdot b$ induces a $B$-linear isomorphism between $\big(D/D^{n+1}\big)\otimes_C B$ and $J/J^{n+1}$.
 \end{lemma}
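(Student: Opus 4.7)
The plan is to reduce the statement to Lemma~\ref{kernelpower} by exploiting the fact that the multiplication $\mu: C \otimes_A C \to C$ is split as a map of $C$-modules.

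Set $R := C \otimes_A C$, regarded as a $C$-algebra via left multiplication. First I will verify that $f$ is a $B$-algebra isomorphism, with inverse $c \otimes b \mapsto (1 \otimes c) \otimes b$, and that it is $B$-linear with respect to the right tensor factor in the source and with respect to the natural $B$-action on $C \otimes_A B$ in the target. Next, I observe that under $f$ the map $\mu \otimes \id_B : R \otimes_C B \to C \otimes_C B = B$ is transported to $\alpha \otimes \id_B : C \otimes_A B \to B$: both send $(x \otimes y) \otimes b$ (respectively its image $y \otimes \alpha(x) b$) to $\alpha(x)\alpha(y) b$. Writing $\pi_n: R \otimes_C B \to (R/D^n) \otimes_C B$, we thereby obtain $f(\ker \pi_1) = J$, and Lemma~\ref{kernelpower} upgrades this to $f(\ker \pi_{n+1}) = f((\ker \pi_1)^{n+1}) = J^{n+1}$. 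Consequently $f$ descends to a $B$-linear isomorphism
$$\bar{f}: (R/D^{n+1}) \otimes_C B \xrightarrow{\sim} (C \otimes_A B)/J^{n+1}.$$

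It remains to identify the submodule of $(R/D^{n+1}) \otimes_C B$ that corresponds to $(D/D^{n+1}) \otimes_C B$ and to check that $\bar{f}$ sends it onto $J/J^{n+1}$. For this I will use that the short exact sequence of $C$-modules
$$0 \to D/D^{n+1} \to R/D^{n+1} \to C \to 0$$
is split by the section $c \mapsto c \otimes 1 \bmod D^{n+1}$, and hence remains exact after applying $(-) \otimes_C B$. Therefore $(D/D^{n+1}) \otimes_C B$ embeds naturally as the kernel of $(R/D^{n+1}) \otimes_C B \to B$, which under $\bar{f}$ is precisely the kernel $J/J^{n+1}$ of $(C \otimes_A B)/J^{n+1} \to B$.

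The only real care needed is to keep track of the various $C$- and $B$-module structures (in particular that the $C$-action on $R$ is always by left multiplication, so that the section $c \mapsto c \otimes 1$ is $C$-linear); no step presents a substantive obstacle once these identifications are made.
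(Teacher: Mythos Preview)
Your argument is correct and follows essentially the same route as the paper: both verify that $f$ intertwines $\mu\otimes\id_B$ with $\alpha\otimes\id_B$, invoke Lemma~\ref{kernelpower} to get $f(\ker\pi_{n+1})=J^{n+1}$, and then pass to the quotient. Your final step is in fact more careful than the paper's, which simply asserts that the isomorphism $(R/D^{n+1})\otimes_C B \cong (C\otimes_A B)/J^{n+1}$ ``restricts'' to the desired one; you make explicit the splitting that guarantees $(D/D^{n+1})\otimes_C B$ injects into $(R/D^{n+1})\otimes_C B$ and lands on $J/J^{n+1}$.
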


\begin{proof}
A simple diagram chase shows that the following diagram commutes:
$$\xymatrix{
(C \otimes_A C) \otimes_C B\ar[r]^f\ar[d]_\pi & C\otimes_AB\ar[d]^\rho\\
[(C \otimes_A C)/D] \otimes_C B\ar[r]^{\ \ \approx} & (C \otimes_A B)/J
}$$
where $\pi$ is the quotient map tensored with $B$, $\rho$ is the quotient map, and the bottom isomorphism is the natural identification of both rings with $B$.
It follows that if we let $\tilde{D}=\ker\pi$ then $f(\tilde{D})=J$.
So $f(\tilde{D}^{n+1})=J^{n+1}$ for any $n$.
On the other hand,
$$\tilde{D}^{n+1}=\ker\big(C \otimes_A C) \otimes_C B\to [(C \otimes_A C)/D^{n+1}] \otimes_C B\big)$$
by Lemma~\ref{kernelpower}.
It follows that $f$ induces a $B$-algebra isomorphism between $\big[(C\otimes_AC)/D^{n+1}\big]\otimes_CB$ and $(C\otimes_AB)/J^{n+1}$, which restricts to an isomorphism between $\big(D/D^{n+1}\big) \otimes_C B$ and $J/J^{n+1}$, as desired.
\end{proof}

\bigskip
Let us now fix a scheme $X$ over a ring $k$.

\begin{definition}
\label{jetspace}
Consider $\O_X \otimes_k \O_X$ as an $\O_X$-algebra via $f \cdot (g \otimes h) := (fg) \otimes h$. 
\begin{itemize}
\item[(a)]
By the sheaf of {\em $n^\text{th}$ co-jets on $X$} we will mean the coherent $\O_X$-module
$\scoj_X^{(n)} := \I/\I^{n+1}$ where $\I$ is the kernel of the $\O_X$-algebra map $\O_X \otimes_k \O_X \to \O_X$ given on sections by $f \otimes g \mapsto f \cdot g$.
\item[(b)]
The {\em $n^\text{th}$ jet space over $X$}, denoted by $\jet^n(X)$, is the linear space associated to $\scoj_X^{(n)}$.
\end{itemize}
That is, $\jet^n(X)\to X$ respresents the functor which associates to every $X$-scheme $g:Y\to X$ the set $\hom_{\O_Y}(g^*\scoj_X^{(n)},\O_Y)$.\footnote{Some discussion of the terminology here is warranted.
Our jet space is closely related to, but different from, Kantor's~\cite{kantor} ``sheaf of jets'': he does not take the associated linear space, but rather works with the sheaf itself. Moreover, his sheaf of jets is $(\O_X\otimes_k\O_X)/\I^{n+1}$ while our sheaf of co-jets is $\I/\I^{n+1}$.
Our jet spaces also differ, more seriously, with Buium's~\cite{buium92} ``jet spaces'', which are what we have called arc spaces (in the pure algebraic setting) and what we have called prolongation spaces (in the differential setting).
Our co-jets coincide in the smooth case with the sheaf of differentials of Grothendieck~\cite{groth}.}
\end{definition}

We recall the (local) construction of the the linear space associated to a coherent sheaf $\mathcal{F}$ on a scheme $X$.
Working locally, let us assume that $X=\spec(A)$ and that $\mathcal{F}$ has a finite presentation over $X$ given by the exact sequence
$$\xymatrix{
A^p \ar^a[r] & A^q \ar^b[r] & \Gamma(X,\mathcal{F})\ar[r] & 0
}$$
Writing $a=(a_{ij})$ as a $p\times q$ matrix over $A$, $x_j\mapsto\sum_{i=1}^qa_{ij}y_i$ determines a map $A[x_1,\dots,x_p]\to A[y_1,\dots,y_q]$.
Taking spectra we obtain a map of group schemes
$$\xymatrix{
 (\mathbb{G}_X^a)^q \ar^{a^*}[rr] \ar[rd] & &(\mathbb{G}_X^a)^p \ar[ld] \\
 & X
  }$$
The linear space $L(\mathcal{F})\to X$ associated to $\mathcal{F}$ is the kernel of $a^*$ as a group subscheme of $(\mathbb{G}_X^a)^q\to X$.
Note that in the case when $\mathcal{F}$ is locally free, the linear space associated to $\mathcal{F}$ is {\em dual} to the vector bundle associated to $\mathcal{F}$.

\begin{remark}
\begin{itemize}
\item[(a)]
If $X$ is a smooth and irreducible then so is $\jet^n(X)$.
\item[(b)]
Jet spaces commute with base change:
$\jet^n(X\times_kR)=\jet^n(X)\times_kR$
for all $k$-algebras $R$.
\end{itemize}
\end{remark}

\begin{proof}
Smoothness of $X$ implies that $\scoj^{(n)}_X$ is a locally free sheaf on $X$, and so the linear space associated to it is dual to the associated vector bundle.
In particular, if $X$ is smooth then $\jet^{(n)}_X$ is a vector bundle over $X$.

For part~(b),
setting $f:X\times_kR\to X$ to be the projection, we first observe that $\scoj^{(n)}_{X\times_kR}=f^*\scoj^{(n)}_X$.
Fix $U$ an affine open set in $X$ and let $\rho$ be the natural quotient map
$$\big[\O_X(U)\otimes_k\O_X(U)\big]\otimes_kR\longrightarrow \big[\big(\O_X(U)\otimes_k\O_X(U)\big)/\I(U)\big]\otimes_kR$$
Identifying
$\big[\O_X(U)\otimes_k\O_X(U)\big]\otimes_kR=\big(\O_X(U)\otimes_kR\big)\otimes_R\big(\O_X(U)\otimes_kR\big)\big)$
and 
$\big[\big(\O_X(U)\otimes_k\O_X(U)\big)/\I(U)\big]\otimes_kR=\O_X(U)\otimes_kR$,
we see that $\rho$ coincides with $(a\otimes r)\otimes(b\otimes s)\longmapsto ab\otimes rs$.
Hence, using Lemma~\ref{kernelpower}, we see that
\begin{eqnarray*}
\scoj^{(n)}_{X\times_kR}(U\times_kR)
&=&
\ker\rho/(\ker\rho)^{n+1}\\
&=&
\big[\I(U)/\I^{n+1}(U)\big]\otimes_kR\\
&=&
f^*\scoj^{(n)}_X(U\times_kR)
\end{eqnarray*}
So $\scoj^{(n)}_{X\times_kR}=f^*\scoj^{(n)}_X$.
Taking linear spaces of both sides, we get
$$\jet^n(X\times_kR)=\jet^n(X)\times_X(X\times_kR)=\jet^n(X)\times_kR$$
as desired
\end{proof}

\begin{definition}
\label{jetatp}
Given a scheme $S$ over $k$ and a morphism $p:S\to X$, we denote by $\jet^n(X)_p$ the scheme $\jet^n(X)\times_X S$ and we call it the {\em $n^\text{th}$ jet space of $X$ at $p$}.
\end{definition}

Note that $\jet^n(X)_p$ is the linear space over $S$ associated to the $\O_S$-module $p^*\scoj_X^{(n)}=p^{-1}\scoj_X^{(n)}\otimes_{p^{-1}\O_X}\O_S$.
The following proposition gives an alternative and useful presentation of $p^*\scoj_X^{(n)}$.

\begin{proposition}
\label{jetsatapoint}
Suppose $S$ is a scheme over $k$ and $p:S\to X$ is an $S$-point of $X$.
Let $\mathcal{J}_p$ be the kernel of the $\O_S$-algebra map $p^\sharp\otimes\id:p^{-1}\O_X\otimes_k\O_S\to\O_S$.

Then for each $n$, $p^*\scoj_X^{(n)}$ is naturally isomorphic to $\mathcal{J}_p/\mathcal{J}_p^{n+1}$.
It follows that
$$\jet^n(X)_p(S)=\hom_{\O_S}\big(\mathcal{J}_p/\mathcal{J}_p^{n+1},\O_S\big).$$
In particular, if $k$ is a field and $p\in X(k)$ then
$$\jet^n(X)_p(k)=\hom_k\big(\mathfrak{m}_p/\mathfrak{m}_p^{m+1},k\big)$$
where $\mathfrak{m}_p$ is the maximal ideal at (the topological point associated to) $p$.
\end{proposition}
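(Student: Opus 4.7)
The plan is to reduce the statement to the affine setting and then invoke Lemma~\ref{ringidentity} essentially verbatim; the final assertion for $k$-points of a scheme over a field is a specialization where $\mathcal{J}_p$ collapses to the usual maximal ideal.

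First, because both $p^*\scoj_X^{(n)}$ and $\mathcal{J}_p/\mathcal{J}_p^{n+1}$ are $\O_S$-modules defined locally on $S$ and $X$, I would work on an affine open $U=\spec(A)\subseteq X$ with $p^{-1}U$ covered by an affine open $V=\spec(B)\subseteq S$.  On $V$, $p$ is given by a $k$-algebra homomorphism $\alpha:A\to B$, and the sheaf $\scoj_X^{(n)}|_U$ is the sheaf associated to $D/D^{n+1}$, where $D=\ker\bigl(A\otimes_k A\to A\bigr)$ is the kernel of multiplication, and the pullback $p^*\scoj_X^{(n)}|_V$ is $\bigl(D/D^{n+1}\bigr)\otimes_A B$.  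Likewise $\mathcal{J}_p|_V$ is the sheaf associated to $J=\ker\bigl(\alpha\otimes\id:A\otimes_k B\to B\bigr)$.

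Now I would apply Lemma~\ref{ringidentity} with $C:=A$ and the chosen $\alpha:A\to B$: the lemma provides a canonical $B$-linear isomorphism
\[
\bigl(D/D^{n+1}\bigr)\otimes_A B\;\xrightarrow{\;\sim\;}\; J/J^{n+1},
\]
which is exactly a natural identification of $p^*\scoj_X^{(n)}|_V$ with $\mathcal{J}_p/\mathcal{J}_p^{n+1}|_V$.  The isomorphism provided by Lemma~\ref{ringidentity} is constructed from the canonical map $f:(A\otimes_kA)\otimes_AB\to A\otimes_kB$, and this construction is manifestly functorial in the pair $(A,\alpha)$, so the local isomorphisms are compatible on overlaps and glue to a global natural isomorphism $p^*\scoj_X^{(n)}\cong \mathcal{J}_p/\mathcal{J}_p^{n+1}$ of $\O_S$-modules.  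I expect the only mildly delicate point to be checking this compatibility, which is a routine diagram chase with the explicit formula $(x\otimes y)\otimes b\mapsto y\otimes \alpha(x)\cdot b$.

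The description of $\jet^n(X)_p(S)$ is then immediate: $\jet^n(X)_p$ is by definition the linear space over $S$ associated to $p^*\scoj_X^{(n)}$, so
\[
\jet^n(X)_p(S)\;=\;\hom_{\O_S}\!\bigl(p^*\scoj_X^{(n)},\O_S\bigr)\;=\;\hom_{\O_S}\!\bigl(\mathcal{J}_p/\mathcal{J}_p^{n+1},\O_S\bigr).
\]
Finally, for the case that $k$ is a field and $p\in X(k)$, take $S=\spec(k)$ and note that $p^{-1}\O_X\otimes_k\O_S$ is simply the stalk $\O_{X,p}$, with $p^\sharp\otimes\id$ identifying with the local ring homomorphism $\O_{X,p}\to k$ whose kernel is the maximal ideal $\mathfrak{m}_p$.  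Hence $\mathcal{J}_p=\mathfrak{m}_p$ and $\mathcal{J}_p/\mathcal{J}_p^{n+1}=\mathfrak{m}_p/\mathfrak{m}_p^{n+1}$, giving the stated formula.
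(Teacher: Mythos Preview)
Your proposal is correct and follows essentially the same approach as the paper: both reduce to the affine situation and invoke Lemma~\ref{ringidentity} with $C=\O_X(V)$, $B=\O_S(U)$, $\alpha$ the map induced by $p$, then pass to the global statement and specialize to $S=\spec(k)$. The only cosmetic difference is that the paper handles $p^{-1}\O_X$ via a direct limit over open $V\subseteq X$ containing $p(U)$, whereas you work directly on affine opens and glue; these are equivalent ways of packaging the same computation.
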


\begin{proof}
We describe an isomorphism $p^*\scoj_X^{(n)}\to \mathcal{J}_p/\mathcal{J}_p^{n+1}$ on sections. 
Fix an open set $U$ in $S$ and an open set $V$ in $X$ containing $p(U)$.
Let $\alpha_V:\O_X(V)\to\O_S(U)$ be the composition of the map from $\O_X(V)$ to $\O_S\big(p^{-1}(V)\big)$ induced by $p$ together with the restriction from $\O_S\big(p^{-1}(V)\big)$ to $\O_S(U)$.
Note that $p^\sharp$ on $U$ is obtained as the direct limit of $\alpha_V$ as $V$ ranges over open subsets of $X$ containing $p(U)$.
Moreover, $\displaystyle \mathcal{J}_p(U)$ is the corresponding direct limit of $J_V:=\ker(\alpha_V\otimes\id)$.
Now Lemma~\ref{ringidentity} applied to $\big(A=k, B=\O_S(U), C=\O_X(V), \alpha=\alpha_V\big)$ yields a natural isomorphism
$\big[\I(V)/\I^{n+1}(V)\big]\otimes_{\O_X(V)}\O_S(U)\longrightarrow J_V/J_V^{n+1}$
Taking direct limits we obtain an isomorphism
$p^*\scoj_X^{(n)}(U)=p^{-1}\scoj_X^{(n)}(U)\otimes_{p^{-1}\O_X(U)}\O_S(U)\rightarrow
\mathcal{J}_p/\mathcal{J}_p^{n+1}(U)$.

As mentioned earlier, $\jet^n(X)_p$ is the linear space over $S$ associated to $p^*\scoj_X^{(n)}$.
Hence,
\begin{eqnarray*}
\jet^n(X)_p(S)
&=&
\hom_{\O_S}\big(p^*\scoj_X^{(n)},\O_S\big)\\
&=&
\hom_{\O_S}\big(\mathcal{J}_p/\mathcal{J}_p^{n+1},\O_S\big)
\end{eqnarray*}
as desired.

Finally, if $k$ is a field and $p\in X(k)$ then setting $S=\spec(k)$ and applying the above result yields that $\jet^n(X)_p(k)=\hom_k\big(\mathfrak{m}_p/\mathfrak{m}_p^{m+1},k\big)$.
\end{proof}

The jet space construction is a covariant functor:
If $f:X\to Y$ is a morphism of schemes over $k$ then we have the induced $\O_X$-algebra map $f^*\scoj^{(n)}_Y\to\scoj^{(n)}_X$ and hence a morphism of linear spaces over $X$, $\jet^n(X)\to \jet^n(Y)\times_YX$, which in turn induces $\jet^n(f):\jet^n(X)\to\jet^n(Y)$ over $f$.
At the level of $S$-points, under the identification given by Proposition~\ref{jetsatapoint},
$\jet^n(f)_p$ is the one induced by the natural map $p^{-1}(f^\sharp):f(p)^{-1}(\O_Y)\to p^{-1}\O_X$.
It is routine to check that if $f$ is a closed embedding then so is $\jet^n(f)$.

\begin{lemma}
\label{imagejet}
Let $f:X \hookrightarrow Y$ be a closed embedding of affine schemes over a field $k$.  
Let $p \in X(k)$ be a $k$-rational point.  Then the image
$\jet^n(f)\big(\jet^n(X)_p(k)\big)$ is
$$\big\{ \psi \in \hom_k(\fm_{Y,f(p)}/\fm_{Y,f(p)}^{n+1},k) : \psi (f) = 0 \text{ for all } f \in I(X)\cdot\big(\O_{Y,f(p)} /\fm_{Y,f(p)}^{n+1}\big)\big\}$$
\end{lemma}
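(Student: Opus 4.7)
The plan is to reduce the statement to a purely linear-algebraic computation by first translating everything through Proposition~\ref{jetsatapoint}. Under the identifications $\jet^n(X)_p(k) = \hom_k(\fm_{X,p}/\fm_{X,p}^{n+1},k)$ and $\jet^n(Y)_{f(p)}(k) = \hom_k(\fm_{Y,f(p)}/\fm_{Y,f(p)}^{n+1},k)$, the functoriality description of $\jet^n(-)$ recalled just before the statement shows that $\jet^n(f)$ on $k$-points sends $\varphi$ to $\varphi\circ\overline{f^\sharp}$, where
$$\overline{f^\sharp}:\fm_{Y,f(p)}/\fm_{Y,f(p)}^{n+1}\longrightarrow \fm_{X,p}/\fm_{X,p}^{n+1}$$
is the $k$-linear map induced by the stalk map $f^\sharp:\O_{Y,f(p)}\to\O_{X,p}$.

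Next I would compute $\ker(\overline{f^\sharp})$ explicitly. Since $f$ is a closed embedding of affine schemes with ideal $I(X)\subseteq\O_Y$, the stalk map $f^\sharp$ is the quotient $\O_{Y,f(p)}\twoheadrightarrow\O_{Y,f(p)}/I(X)\!\cdot\!\O_{Y,f(p)}=\O_{X,p}$, and since $f(p)\in X$ we have $I(X)\!\cdot\!\O_{Y,f(p)}\subseteq\fm_{Y,f(p)}$, so this quotient restricts to a surjection $\fm_{Y,f(p)}\twoheadrightarrow\fm_{X,p}$ with kernel $I(X)\!\cdot\!\O_{Y,f(p)}$. Therefore $\fm_{X,p}^{n+1}$ pulls back to $I(X)\!\cdot\!\O_{Y,f(p)}+\fm_{Y,f(p)}^{n+1}$, and
$$\ker(\overline{f^\sharp}) \;=\; \frac{I(X)\!\cdot\!\O_{Y,f(p)}+\fm_{Y,f(p)}^{n+1}}{\fm_{Y,f(p)}^{n+1}} \;=\; I(X)\cdot\bigl(\O_{Y,f(p)}/\fm_{Y,f(p)}^{n+1}\bigr),$$
the last equality being the image of $I(X)\!\cdot\!\O_{Y,f(p)}$ in $\O_{Y,f(p)}/\fm_{Y,f(p)}^{n+1}$, which lives inside $\fm_{Y,f(p)}/\fm_{Y,f(p)}^{n+1}$ by the containment just noted.

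Finally, I would invoke linear duality over the field $k$: a functional $\psi\in\hom_k(\fm_{Y,f(p)}/\fm_{Y,f(p)}^{n+1},k)$ factors through the surjection $\overline{f^\sharp}$ (equivalently, equals $\varphi\circ\overline{f^\sharp}$ for some $\varphi$) if and only if $\psi$ vanishes on $\ker(\overline{f^\sharp})$. Combining this with the previous paragraph gives exactly the characterization in the lemma.

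The argument is essentially routine, and the only point that demands a bit of care is the identification of $\ker(\overline{f^\sharp})$ as a submodule of $\fm_{Y,f(p)}/\fm_{Y,f(p)}^{n+1}$ rather than of $\O_{Y,f(p)}/\fm_{Y,f(p)}^{n+1}$; this requires noting that $I(X)\subseteq\fm_{Y,f(p)}$ at a point $f(p)$ of $X$ so that the natural inclusion is well-defined. There is no serious obstacle.
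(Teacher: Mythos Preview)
Your proof is correct and follows essentially the same approach as the paper's: identify the jet fibres via Proposition~\ref{jetsatapoint}, note that $\jet^n(f)$ is precomposition with the surjection induced by $f^\sharp$, and conclude by linear duality that the image is the annihilator of $\ker(\overline{f^\sharp})=I(X)\cdot(\O_{Y,f(p)}/\fm_{Y,f(p)}^{n+1})$. Your version is in fact more carefully spelled out than the paper's, particularly on the point that $I(X)\cdot\O_{Y,f(p)}\subseteq\fm_{Y,f(p)}$.
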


\begin{proof}
Let $f^*:\O_{Y,f(p)}\to \O_{X,p}$ be the associated homomorphism on local rings.
Read through the identification of $\jet^n(X)_p(k)$ with $\hom_k(\fm_{X,p}/\fm_{X,p}^{n+1},k)$,
the map $\jet^n(f)$ is given by $\psi \mapsto \psi \circ f^*$.
Since $\ker(f^*)=I(X)\cdot\O_{Y,f(p)}$ and $\im(f^*)=\O_{X,p}$, this proves the lemma.
\end{proof}

\begin{corollary}
\label{algjetsdetermine}
Suppose $Z$ is an algebraic variety (i.e. a separated, integral scheme of finite type) over a field $k$.
If $X$ and $Y$ are irreducible closed subvarieties over $k$, and $p\in X(k)\cap Y(k)$ has the the property that $\jet^n(X)_p=\jet^n(Y)_p$ for all $n\in\mathbb{N}$, then $X=Y$.
\end{corollary}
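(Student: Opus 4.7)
The plan is to use Lemma~\ref{imagejet} to convert the scheme-theoretic hypothesis $\jet^n(X)_p = \jet^n(Y)_p$ into an equality of ideals in $\O_{Z,p}$ modulo $\fm_{Z,p}^{n+1}$, and then bootstrap via Krull's intersection theorem and irreducibility. Since the conclusion is local near $p$, I would first replace $Z$ by an affine open neighborhood of $p$, so that Lemma~\ref{imagejet} applies. The key point to handle carefully is the translation from scheme-theoretic equality of the jet spaces to equality of the associated $k$-subspaces of $\jet^n(Z)_p(k)$; this is legitimate because each $\jet^n(X)_p$ is a closed linear subscheme of the finite-dimensional linear space $\jet^n(Z)_p$ over $k$, and such subschemes are determined by their sets of $k$-rational points (equivalently, by the $k$-subspace of linear forms in their defining ideal).

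Applying Lemma~\ref{imagejet} to the closed embeddings $X \hookrightarrow Z$ and $Y \hookrightarrow Z$ identifies $\jet^n(X)_p(k)$ and $\jet^n(Y)_p(k)$, as subspaces of $\jet^n(Z)_p(k) = \hom_k(\fm_{Z,p}/\fm_{Z,p}^{n+1}, k)$, with the annihilators of the images in $\fm_{Z,p}/\fm_{Z,p}^{n+1}$ of $I(X) \cdot \O_{Z,p}$ and $I(Y) \cdot \O_{Z,p}$ respectively; these images do land in $\fm_{Z,p}/\fm_{Z,p}^{n+1}$ because $p \in X(k) \cap Y(k)$. The hypothesis therefore forces the annihilators to coincide, and by finite-dimensional duality we obtain
\[
I(X) \cdot \O_{Z,p} + \fm_{Z,p}^{n+1} \; = \; I(Y) \cdot \O_{Z,p} + \fm_{Z,p}^{n+1} \qquad \text{for every } n \in \mathbb{N}.
\]

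To finish, for any $f \in I(X) \cdot \O_{Z,p}$ the image of $f$ in the Noetherian local ring $R := \O_{Z,p}/(I(Y) \cdot \O_{Z,p})$ lies in $\bigcap_n \fm_{Z,p}^{n+1} R$, which vanishes by Krull's intersection theorem; hence $f \in I(Y) \cdot \O_{Z,p}$, and by symmetry $I(X) \cdot \O_{Z,p} = I(Y) \cdot \O_{Z,p}$. Since $I(X)$ and $I(Y)$ are finitely generated in the coordinate ring of the affine $Z$, this localized equality propagates to some Zariski open $V \ni p$, giving $X \cap V = Y \cap V$ as closed subschemes. Because the integral schemes $X$ and $Y$ are each the closure of the nonempty open subscheme $X \cap V = Y \cap V$, we conclude $X = Y$. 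The main obstacle is really just the first duality step; once the hypothesis has been converted into the displayed ideal equality, the remainder is standard Noetherian commutative algebra.
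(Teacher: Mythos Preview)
Your proof is correct and follows essentially the same route as the paper: reduce to an affine neighborhood, invoke Lemma~\ref{imagejet} to identify $\jet^n(X)_p(k)$ with the annihilator of the image of $I(X)\cdot\O_{Z,p}$ in $\fm_{Z,p}/\fm_{Z,p}^{n+1}$, and then use Krull's intersection theorem together with irreducibility to conclude. The only minor difference in execution is that the paper works directly in the affine coordinate ring and uses primality of $I(Y)$ to pass between $I(Y)$ and $I(Y)\cdot\O_{Z,p}$, whereas you first establish the equality $I(X)\cdot\O_{Z,p}=I(Y)\cdot\O_{Z,p}$ in the local ring and then spread it to an open set before invoking irreducibility; both are equally valid.
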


\begin{proof}
If $U \subseteq Z$ is a dense open affine containing $p$
and $U \cap X = U \cap Y$, then $X = Y$.
Thus, we may assume that $Z$ is affine.
We show now that $I(X) \subseteq I(Y)$.  The opposite inclusion is 
shown by reversing the r\^{o}les of $X$ and $Y$.  If $f \in I(X)$, then by the description of 
the image of the $\jet^n(X)_p(k)$ in $\jet^n(Z)_p(k)$ from Lemma~\ref{imagejet}, every element of $\jet^n(X)_p(k) \leq \hom_k(\fm_{Z,p}/\fm_{Z,p}^{n+1},k)$ vanishes on $f$.  As $Z$ (and, hence, the local ring $\O_{Z,p}$) is noetherian, 
$\displaystyle \bigcap_{n \geq 0} \big(\fm_{Z,p}^{n+1} + I(Y) \cdot\O_{Z,p} \big) = I(Y) \cdot\O_{Z,p}$.
If $f \notin I(Y)$, then as $I(Y)$ is primary, $f \notin I(Y) \cdot\O_{Z,p}$.
So, for some $n$ we have $f \notin \fm_{Z,p}^{n+1} + I(Y) \cdot\O_{Z,p}$.
Again by the description of the image of $\jet^n(Y)_p(k)$, there would be some 
element of the image which did not vanish on $f$.
Since $\jet^n(X)_p(k)$
and $\jet^n(Y)_p(k)$ have the same image by assumption, this is be impossible.  
\end{proof}

\begin{lemma}
\label{separablejet}
Suppose $f:X \to Y$ is a dominant separable morphism of varieties over a feild $k$.
Then $\jet^n(f):\jet^n(X) \to\jet^n(Y)$ is a dominant morphism.
\end{lemma}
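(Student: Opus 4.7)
The plan is to reduce via generic smoothness to the case where $f$ is smooth and surjective, and then check dominance of $\jet^n(f)$ by verifying surjectivity on $\bar k$-points using the local form of a smooth morphism.

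Since $f:X\to Y$ is a dominant morphism of varieties with $k(X)/k(Y)$ separable, generic smoothness supplies a nonempty open $U\subseteq X$ on which $f$ is smooth; as smooth morphisms are open, $V:=f(U)$ is open in $Y$ and $f|_U:U\to V$ is smooth and surjective. Because $\scoj_X^{(n)}=\I/\I^{n+1}$ is built locally from the diagonal ideal, the formation of $\jet^n$ respects open immersions: $\jet^n(U)=\jet^n(X)\times_XU$ and $\jet^n(V)=\jet^n(Y)\times_YV$ are dense open subschemes of $\jet^n(X)$ and $\jet^n(Y)$, so dominance of $\jet^n(f)$ is equivalent to dominance of $\jet^n(f|_U)$. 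I may therefore assume $f$ itself is smooth and surjective. Using that jet spaces commute with base change, I further base-change to $\bar k$ and reduce to proving surjectivity of $\jet^n(f)$ on $\bar k$-points, which suffices for dominance since $\bar k$-points are dense in any finite-type $\bar k$-scheme.

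Given $q\in Y(\bar k)$, surjectivity of $f$ furnishes a preimage $p\in X(\bar k)$. By Proposition~\ref{jetsatapoint},
$$\jet^n(X)_p(\bar k)=\hom_{\bar k}(\fm_p/\fm_p^{n+1},\bar k)\quad\text{and}\quad \jet^n(Y)_q(\bar k)=\hom_{\bar k}(\fm_q/\fm_q^{n+1},\bar k),$$
and $\jet^n(f)_p$ is the $\bar k$-linear dual of the homomorphism $\phi:\fm_q/\fm_q^{n+1}\to\fm_p/\fm_p^{n+1}$ induced by $f^\sharp$. It therefore suffices to show $\phi$ is injective. By the local structure of a smooth morphism at a $\bar k$-point, one may choose formal parameters with $\widehat{\O}_{Y,q}\cong\bar k[[y_1,\dots,y_m]]$ and $\widehat{\O}_{X,p}\cong\bar k[[y_1,\dots,y_m,t_1,\dots,t_r]]$, where $r$ is the relative dimension and $f^\sharp$ is the tautological inclusion. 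The induced map on the (completion-invariant) quotients $\fm/\fm^{n+1}$ is then visibly an inclusion of subspaces, yielding injectivity of $\phi$.

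The main technical content lies in the last step, namely controlling $\phi$ via the local normal form of a smooth morphism; the generic-smoothness reduction and the passage from fibrewise $\bar k$-point surjectivity to global dominance are both routine once this local injectivity is in hand.
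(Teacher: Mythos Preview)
Your argument follows essentially the same route as the paper's: reduce to the smooth locus via generic smoothness, show that for a smooth point $p$ the induced map $\fm_{f(p)}/\fm_{f(p)}^{n+1}\to\fm_p/\fm_p^{n+1}$ is injective, dualize to get surjectivity on jet fibres, and conclude dominance from density of the image. The paper's version is terser (it asserts that injectivity on $\fm/\fm^2$ propagates to all $\fm/\fm^{n+1}$ without spelling out why), while you make the mechanism explicit through the formal local structure.

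One small over-claim: you write $\widehat{\O}_{Y,q}\cong\bar k[[y_1,\dots,y_m]]$, which presumes $Y$ is regular at $q$. Nothing in the hypotheses guarantees this; a smooth morphism can have singular target (e.g.\ the identity on a singular variety). The correct local statement for a smooth morphism at a $\bar k$-point is $\widehat{\O}_{X,p}\cong\widehat{\O}_{Y,q}[[t_1,\dots,t_r]]$, with no hypothesis on $\widehat{\O}_{Y,q}$. From this the injectivity of $\fm_q/\fm_q^{n+1}\to\fm_p/\fm_p^{n+1}$ still follows directly: an element of $\widehat{\O}_{Y,q}$ lying in $\fm_p^{n+1}$ has vanishing coefficients in $t$-degree $<n+1$ except possibly the constant term, which must then lie in $\fm_q^{n+1}$. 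So your argument goes through once you drop the unneeded assertion about $\widehat{\O}_{Y,q}$.
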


\begin{proof}
We may take $k=k^{\alg}$.
As $f$ is dominant and separable, there is a dense open $U \subseteq X$ on which 
$f$ is smooth.  That is, for every point $p \in U(k)$ the map $f^*:\fm_{Y,f(p)}/\fm_{Y,f(p)}^2 
\to \fm_{X,p}/\fm_{X,p}^2$ is injective.  It follows that for every $n>0$ that the 
map $f^*:\fm_{Y,f(p)}/\fm_{Y,f(p)}^{n+1} \to \fm_{X,p}/\fm_{X,p}^{n+1}$ is injective.  Hence, 
taking duals, $\jet^n(f)_p:\jet^n(X)_p(k) \to \jet^n(Y)_{f(p)}(k)$ is surjective.  As $f\big(U(k)\big)$ is 
dense in $Y$, we have that $\jet^n(f)\big(\jet^n(X)(k)\big)$ is Zariski-dense in $\jet^n(Y)$.
\end{proof}

\begin{lemma}
\label{etalejet}
Suppose $f:X \to Y$ is an \'etale morphism of schemes of finite type over a field $k$.
Then $\jet^n(f):\jet^n(X) \to\jet^n(Y)$ is \'etale.
Moreover, if $R$ is a $k$-algebra with $\spec(R)$ finite, and $p:\spec(R)\to X$ is an $R$-point of $X$ over $k$, then $\jet^n(f)_p:\jet^n(X)_p \to\jet^n(Y)_{f(p)}$ is an isomorphism.
\end{lemma}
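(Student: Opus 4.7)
The lemma reduces to a single key algebraic fact: for any \'etale morphism $f:X\to Y$ between schemes over $k$, the natural comparison map $f^*\scoj^{(n)}_Y\to\scoj^{(n)}_X$ of coherent $\O_X$-modules is an isomorphism.  This is a classical result on the principal parts of \'etale morphisms (the vanishing of $\Omega^1_{X/Y}$ propagates through all orders); I sketch its proof at the end.

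Granting this, the rest of the lemma is formal.  Since taking linear spaces commutes with pullback --- for any coherent $\O_Y$-module $\mathcal{F}$ and morphism $g:X\to Y$, $L(g^*\mathcal{F})=L(\mathcal{F})\times_Y X$ --- we obtain a canonical identification of $X$-schemes
\[
\jet^n(X)=L(\scoj^{(n)}_X)\cong L(f^*\scoj^{(n)}_Y)=\jet^n(Y)\times_Y X,
\]
under which $\jet^n(f):\jet^n(X)\to\jet^n(Y)$ becomes the projection onto the first factor.  This projection is the base change of $f:X\to Y$ along $\jet^n(Y)\to Y$, hence \'etale by stability of \'etaleness under base change.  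For the \emph{moreover}, the same identification yields, for any morphism $p:\spec(R)\to X$ (the finiteness hypothesis on $\spec(R)$ plays no role in this argument),
\[
\jet^n(X)_p=\jet^n(X)\times_X\spec(R)\cong\jet^n(Y)\times_Y\spec(R)=\jet^n(Y)_{f(p)},
\]
and $\jet^n(f)_p$ is exactly this isomorphism.

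To sketch the key fact, work affinely: $X=\spec(B)$, $Y=\spec(A)$, $A\to B$ \'etale.  Set $I_A=\ker(A\otimes_k A\to A)$, $I_B=\ker(B\otimes_k B\to B)$, and $\mathcal{J}_f:=\ker(A\otimes_k B\to B)$.  By Lemma~\ref{ringidentity} applied with $C=A$, $\alpha=f$, one has $f^*\scoj^{(n)}_Y\cong\mathcal{J}_f/\mathcal{J}_f^{n+1}$, and the natural map $A\otimes_k B\to B\otimes_k B$ (via $f$ on the left factor) sends $\mathcal{J}_f$ into $I_B$ and induces the comparison $\mathcal{J}_f/\mathcal{J}_f^{n+1}\to I_B/I_B^{n+1}$.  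To show this is an isomorphism, use the short exact sequence
\[
0\to M\to I_B\to I_{B/A}\to 0,
\]
where $M$ is the ideal of $B\otimes_k B$ generated by $\mathcal{J}_f$ and $I_{B/A}=\ker(B\otimes_A B\to B)$: since $A\to B$ is \'etale, $I_{B/A}$ is generated by the complement of the separability idempotent, whence $I_{B/A}^{n+1}=I_{B/A}$; combined with the sequence, the third term collapses modulo $I_B^{n+1}$.

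The main obstacle is this last computation.  The separability idempotent does not lift from $B\otimes_A B$ to $B\otimes_k B$ itself, so the comparison has to be carried out modulo $I_B^{n+1}$, relying on Hensel-type uniqueness of idempotent lifting along the nilpotent ideal $I_B/I_B^{n+1}$ inside $B\otimes_k B/I_B^{n+1}$, and reducing if convenient to the standard local presentation $B=A[t_1,\ldots,t_r]/(g_1,\ldots,g_r)$ with $\det(\partial g_i/\partial t_j)$ a unit.
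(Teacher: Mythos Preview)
Your approach is correct and genuinely different from the paper's. You prove the global sheaf-level statement $f^*\scoj^{(n)}_Y\cong\scoj^{(n)}_X$ for \'etale $f$, from which both assertions of the lemma follow at once: $\jet^n(X)\cong\jet^n(Y)\times_YX$ makes $\jet^n(f)$ a base change of $f$ (hence \'etale), and the fibre statement drops out with no hypothesis on $R$ whatsoever. The paper instead argues pointwise: it uses the finiteness of $\spec(R)$ to reduce to local rings $R_x$ with nilpotent maximal ideal, so that $J_{p_x}^{n+1}$ contains a power of $\fm_{p(x)}\otimes 1$, and then invokes the \'etale isomorphism $\fm_{f(q)}/\fm_{f(q)}^\ell\cong\fm_q/\fm_q^\ell$ to match the co-jet modules fibre by fibre; \'etaleness of $\jet^n(f)$ is then deduced \emph{a posteriori} from the fibre isomorphisms via a tangent-space check. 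Your route is cleaner and shows that the finiteness hypothesis is superfluous; the paper's route is more elementary in that it avoids the general principal-parts result and works directly with the presentation of Proposition~\ref{jetsatapoint}.

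One comment on your sketch of the key fact: the outline via $I_{B/A}^2=I_{B/A}$ is sound, but as written it stops short of the actual comparison. From $I_B=M+I_B^2$ (which is exactly $I_{B/A}/I_{B/A}^2=0$) one gets inductively $I_B=M+I_B^{n+1}$, and combining this with flatness of $A\otimes_kB\to B\otimes_kB$ (base change of $f$) one can push through to $\mathcal{J}_f/\mathcal{J}_f^{n+1}\otimes_{A\otimes_kB}(B\otimes_kB)\cong I_B/I_B^{n+1}$; alternatively, one may simply cite the formally \'etale lifting property or EGA~IV~16.4.14 for principal parts. Either way the claim is standard; just be aware that the Hensel/idempotent-lifting remark at the end is a red herring---you don't need to lift the idempotent, only to use its existence in $B\otimes_AB$.
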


\begin{proof}
Since $f$ is \'etale, for every topological point $q\in X$,
$f$ induces an isomorphism of finite dimensional $k$-vector spaces, $\mathfrak{m}_{f(q)}/\mathfrak{m}_{f(q)}^{m}\to\mathfrak{m}_q/\mathfrak{m}_q^m$, for all $m>0$.
Now for each $x\in\spec(R)$ we have the associated local $k$-algebra homomorphisms $p_x^\sharp:\O_{X,p(x)}\to R_{x}$.
Set $J_{p_x}:=\ker\big(p_x^\sharp\otimes\id:\O_{X,p(x)}\otimes_kR_{x}\to R_{x}\big)$.
Specialising Proposition~\ref{jetsatapoint} to the case of $S=\spec(R)$ we see that
$$p^*\scoj^{(n)}_X=\bigoplus_{x\in\spec(R)}J_{p_x}/J_{p_x}^{n+1}$$
and
$$f(p)^*\scoj^{(n)}_Y=\bigoplus_{x\in\spec(R)}J_{f(p)_x}/J_{f(p)_x}^{n+1}$$
Hence it suffices to show that for each $x\in\spec(R)$, $f$ induces an ismorphism between $J_{f(p)_x}/J_{f(p)_x}^{n+1}$ and $J_{p_x}/J_{p_x}^{n+1}$.
Since the maximal ideal $\mathfrak{m}_x$ in $R_{(x)}$ must be nilpotent, $J_{p_x}^{n+1}$ contains $\mathfrak{m}_{p(x)}^\ell\otimes 1$ and $J_{f(p)_x}^{n+1}$ contains $\mathfrak{m}_{f(p)(x)}^\ell\otimes 1$, for some $\ell> 0$.
But $f$ does induce an ismorphism $(\O_{X,p(x)}\otimes_kR_{x})/(\mathfrak{m}_{p(x)}^{\ell}\otimes 1)\to(\O_{Y,f(p)(x)}\otimes_kR_{x})/(\mathfrak{m}_{f(p)(x)}^{\ell}\otimes 1)$ which will take $J_{p_x}^{n+1}$ to $J_{f(p)(x)}^{n+1}$.
So $f$ induces an isomorphism between $f(p)^*\scoj^{(n)}_Y$ and $p^*\scoj^{(n)}_X$, and hence an isomorphism between $\jet^n(Y)_{f(p)}$ and $\jet^n(X)_p$, as desired.

The first part of the lemma now follows on general grounds.  To show that $\jet^n(f)$ is \'{e}tale we need to check 
that it is smooth and of relative dimension zero.  These properties are local.  As on the base $\jet^n(f)$
is simply $f$ which is \'{e}tale and $\jet^n(f)$ is an isomorphism fibrewise, $\jet^n(f)$ is of relative
dimension zero.
For smoothness consider the following diagram for any point $\tilde{p} \in \jet^n(X)(k^{\alg})$ lying above some point $p \in X(k^{\alg})$:
$$\xymatrix{
T_{\tilde{p}}\big(\jet^n(X)_p\big)\ar[d]\ar[rr]^{d_{\tilde{p}}\left(\jet^n(f)_p\right)\ \ \ } && T_{\jet^n(f)(\tilde{p})}\left(\jet^n(Y)_{f(p)}\right)\ar[d]\\
 T_{\tilde{p}}\jet^n(X)\ar[d]\ar[rr]^{d_{\tilde{p}}\jet^n(f)}  & & T_{\jet^n(f)(\tilde{p})}\jet^n(Y)\ar[d]\\
 T_pX\ar[rr]^{d_pf} && T_{f(p)}Y
  }$$
 As $\jet^n(f)$ restricts to isomorphism between
$\jet^n(X)_{p}$ and $\jet^n(Y)_{f(p)}$ we see that $d_{\tilde{p}}\left(\jet^n(f)_p\right)$ is an isomorphism between 
$T_{\tilde{p}} \left(\jet^n(X)_p\right)$ and $T_{\jet^n(f)(\tilde{p})}\left( \jet^n(Y)_{f(p)}\right)$.  As $f$ itself is 
\'{e}tale, $d_pf$ is an isomorphism between $T_p X$ and $T_{f(p)} Y$.  Hence, 
$d_{\tilde{p}}\jet^n(f)$ is an isomorphism and so $\jet^n(f)$ is \'{e}tale. 
\end{proof}

\begin{remark}
\label{restrictopenjet}
It follows from Lemma~\ref{etalejet} that if $U$ is a Zariski open subset of $X$ then $\jet^nU\to U$ is the restriction of $\jet^nX\to X$ to $U$.
\end{remark}

\bigskip

\subsection{A co-ordinate description of the jet space}
Let $X\subset \mathbb{A}_k^\ell$ be an affine scheme of finite type over a ring $k$.
We wish to give a co-ordinate description of $\jet^n(X)$ as a subscheme of $\jet^n(\mathbb{A}_k^\ell)$.

If $x=(x_1,\dots,x_\ell)$ are co-ordinates for $\mathbb{A}_k^\ell$ then $\Gamma(\mathbb{A}_k^\ell,\scoj^{(n)}_{\mathbb{A}_k^\ell})=J/J^{n+1}$
where $J$ is the ideal in $k[x,x']$ generated by elements of the form $z_i:=(x_i'-x_i)$.
Setting $z=(z_1,\dots,z_\ell)$ we have that $z=x'-x$.
Now let
$\displaystyle \Lambda:= \{\alpha\in \mathbb{N}^\ell : 0<\sum_{i=1}^\ell\alpha_i\leq n\}$.
We use multi-index notation so that for each $\alpha\in\Lambda$,
$\displaystyle z^\alpha:=\prod_{i=1}^\ell z_i^{\alpha_i}$.
Note that $J/J^{n+1}$ is freely generated as a $k[x]$-module by
$\{z^\alpha(\mod J^{n+1}):\alpha\in\Lambda\}$.
That is, $\jet^n(\mathbb{A}_k^\ell)$ is the affine space $\spec\big(k[x,(z_\alpha)_{\alpha\in\Lambda}]\big)$.

Suppose $X=\spec\big(k[x]/I\big)$.
Now $\Gamma(X,\scoj_X^{(n)})=\langle JII'\rangle/\langle J^{n+1}II'\rangle$ where $I'$ is just $I$ in the indeterminates $x'$.
As a $k[x]/I$-module $\Gamma(X,\scoj_X^{(n)})$ is generated by the image of
$\{ z^\alpha(\mod J^{n+1}):\alpha\in\Lambda\}$.
By the construction of linear spaces, to describe $\jet^n(X)$ we need to describe the relations among these generators.
The relations are obtained by writing $P(x')$, for each $P\in I$, as a $k[x]$-linear combination of $\{ z^\alpha(\mod J^{n+1}):\alpha\in\Lambda\}$ in $k[x,x']/J^{n+1}$.

To this end, for each $\alpha\in\mathbb{N}^\ell$ let $D^\alpha$ be the differential operator on $k[x]$ given by
$$D^\alpha\big(\sum_{\beta\in B}r_\beta x^\beta\big)=\sum_{\beta\in B,\beta\geq\alpha}r_\beta\binom{\beta}{\alpha}x^{\beta-\alpha}.$$
That is, for any $P\in k[x]$,
$\displaystyle P(x+z)=\sum_\alpha (D^\alpha P)(x)z^\alpha$.
Note that if $\alpha!:=\alpha_1!\cdots\alpha_\ell!$ is invertible in $k$ then $D^\alpha$ is just the differential operator
$\displaystyle\frac{1}{\alpha!}\frac{\partial^{|\alpha|}}{\ \partial x_1^{\alpha_1}\cdots\partial x_\ell^{\alpha_\ell}}$.
Now consider $\displaystyle P(x)=\sum_{\beta\in B}r_\beta x^\beta \in I$.
Then
\begin{eqnarray*}
P(x') & = & P(x+z)\\
& = & \sum_\alpha (D^\alpha P)(x)z^\alpha\\
& = & \sum_{\alpha\in\Lambda}(D^\alpha P)(x)z^\alpha \ \ \mod J^{n+1}
\end{eqnarray*}
We have thus shown:

\begin{proposition}
\label{jet-coordinate}
With the above notation,
if $X=\spec(k[x]/I)$ then as a sub-scheme of the affine space $\jet^n(\mathbb{A}_k^\ell)=\spec\big(k[x,(z_\alpha)_{\alpha\in\Lambda}]\big)$, $\jet^n(X)$ is given by the equations:
\begin{eqnarray*}
P(x) & = &0\\
\sum_{\alpha\in\Lambda}(D^\alpha P)(x)z_\alpha & = & 0
\end{eqnarray*}
for each $P\in I$.
\end{proposition}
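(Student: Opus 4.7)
The plan is to formalize the Taylor-expansion computation already sketched in the paragraphs preceding the proposition as an application of the linear-space construction. The proof naturally splits into (i) identifying the sheaf of co-jets $\scoj_X^{(n)}$ in coordinates, and (ii) passing from this coherent sheaf to its associated linear space.

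For step (i), I would first handle the ambient affine space: with $z_i := x_i' - x_i$, the ideal $J \subset k[x,x']$ is freely generated by $z_1, \ldots, z_\ell$ over $k[x,x']$, and modulo $J^{n+1}$ the monomials $\{z^\alpha : \alpha \in \Lambda\}$ form a $k[x]$-basis of $J/J^{n+1}$. This identifies $\jet^n(\mathbb{A}_k^\ell) = \spec\bigl(k[x,(z_\alpha)_{\alpha\in\Lambda}]\bigr)$. For $X = \spec(k[x]/I)$, the same reasoning shows that $\Gamma(X,\scoj_X^{(n)})$ is generated over $k[x]/I$ by the classes of the $z^\alpha$ with $\alpha \in \Lambda$, the only new relations arising from the requirement that $P(x')$ vanish in the quotient for each $P \in I$. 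The key Taylor identity $P(x+z) = \sum_\alpha (D^\alpha P)(x)\, z^\alpha$, a formal consequence of the binomial theorem applied to each monomial, yields modulo $J^{n+1}$ the relation $P(x) + \sum_{\alpha \in \Lambda}(D^\alpha P)(x)\, z^\alpha = 0$; since $P(x) \in I$ already vanishes in $k[x]/I$, this reduces to the desired relation $\sum_{\alpha \in \Lambda}(D^\alpha P)(x)\, z^\alpha = 0$ in $\Gamma(X,\scoj_X^{(n)})$.

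For step (ii), I would invoke the explicit local description of linear spaces recalled immediately after Definition~\ref{jetspace}: a coherent sheaf presented as the cokernel of a matrix $(a_{ij})$ has associated linear space sitting inside $(\mathbb{G}_X^a)^q$ as the kernel of the map determined by $x_j \mapsto \sum_i a_{ij} y_i$. Applied to our presentation of $\scoj_X^{(n)}$, each module relation from step (i) translates directly---replacing the module generators $z^\alpha$ by the affine coordinates $z_\alpha$---into the linear equation $\sum_{\alpha \in \Lambda}(D^\alpha P)(x)\, z_\alpha = 0$ on $\jet^n(\mathbb{A}_k^\ell)$. Combined with the base equations $P(x) = 0$ defining $X \subset \mathbb{A}_k^\ell$, these cut out $\jet^n(X)$ as a closed subscheme of $\jet^n(\mathbb{A}_k^\ell)$, as claimed.

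The only delicate point is verifying that the Taylor identity holds over an arbitrary base ring $k$, without invertibility of $\alpha!$; this is precisely why $D^\alpha$ is defined via the integer binomial coefficients $\binom{\beta}{\alpha}$ rather than as $\tfrac{1}{\alpha!}\partial^\alpha$, so that the identity reduces to the universal monomial calculation $(x+z)^\beta = \sum_\alpha \binom{\beta}{\alpha} x^{\beta-\alpha} z^\alpha$. Once this is in hand, the rest of the argument is bookkeeping.
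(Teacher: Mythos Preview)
Your proposal is correct and follows essentially the same approach as the paper: the paper's proof is precisely the computation in the paragraphs immediately preceding the proposition, which identifies the generators and relations of $\Gamma(X,\scoj_X^{(n)})$ via the Taylor expansion $P(x') = P(x+z) = \sum_\alpha (D^\alpha P)(x)\,z^\alpha$ and then reads off the equations of the associated linear space. Your organization into steps (i) and (ii), together with the remark on why the binomial-coefficient definition of $D^\alpha$ avoids invertibility issues, is a faithful and slightly more explicit rendering of the same argument.
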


\begin{remark}
The co-ordinate description of the jet space given by Proposition~\ref{jet-coordinate} agrees with the way  that jet spaces of algebraic varieties are {\em defined} in~\cite{pillayziegler03}.
\end{remark}

\bigskip

\section{Interpolation}
\label{sectinterpolating}

\noindent
In this section we introduce a natural map that compares the jet space of a prolongation to the prolongation of the jet space.
This morphism will allow us, in a sequel paper, to define the Hasse-differential jet spaces of Hasse-differential varieties.

Fix a finite free $\mathbb{S}$-algebra scheme $\E$ over a ring $A$, an $A$-algebra $k$, an $A$-algebra homomorphism $e:k\to\E(k)$, and a scheme $X$ over $k$.
Let $\tau(X)=\tau(X,\E,e)$ be the prolongation of $X$ with respect to $\E$ and $e$. (Recall that our standing assumption is that this prolongation space exists, which is the case for example when $X$ is quasi-projective.)
Fix also $m\in\mathbb{N}$.
We construct a map
$$\phi_{m,\E,e}^X:\jet^m \big(\tau(X)\big) \to \tau\big(\jet^m (X)\big)$$
over $X$, which we will call the {\em interpolating map} of $X$ (with respect to $m$, $\E$, and $e$).
We will define $\phi_{m,\E,e}^X$ by expressing its action on the $R$-points of $\jet^m\tau(X)$, for arbitrary $k$-algebras $R$.
It should be clear from the construction, and will also follow from the co-ordinate description given in the next section, that $\phi_{m,\E,e}^X$ is a morphism of schemes over $k$.

Our map will be the composition of two other maps which we now describe.
Suppose $p:\spec(R)\to \tau(X)$ is an $R$-point of $\tau(X)$ over $k$.
Then $p\times_k\E(k):\spec\big(\E(R)\big)\to\tau(X)\times_k\E(k)$ is an $\E(R)$-point of $\tau(X)\times_k\E(k)$ over $\E(k)$.

\begin{lemma}
\label{basechangemap}
Base change from $k$ to $\E(k)$ induces a natural map
$$u:=u^{X,p}_{m,\E,e}:\jet^m\big(\tau(X)\big)_p(R)\longrightarrow\jet^m\big(\tau(X)\times_k\E(k)\big)_{p\times_k\E(k)}\big(\E(R)\big)$$
\end{lemma}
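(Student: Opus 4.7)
The plan is to construct $u$ by applying $-\otimes_k \E(k)$ to the ``ideal of the graph'' description of the jet space supplied by Proposition~\ref{jetsatapoint}, and then show that, because $\E(k)$ is finite free (hence flat) over $k$, this base change behaves well with respect to the relevant ideals and their powers.

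First I would apply Proposition~\ref{jetsatapoint} twice. On the source side it identifies
$$\jet^m(\tau(X))_p(R) \;=\; \hom_R\bigl(\mathcal{J}_p/\mathcal{J}_p^{m+1},\,R\bigr),$$
where $\mathcal{J}_p$ is the kernel of $p^\sharp \otimes \id : p^{-1}\O_{\tau(X)} \otimes_k R \to R$. On the target side, writing $q := p \times_k \E(k)$ as an $\E(R)$-point of $\tau(X)\times_k \E(k)$ over $\E(k)$, it identifies
$$\jet^m\bigl(\tau(X)\times_k\E(k)\bigr)_{q}\bigl(\E(R)\bigr) \;=\; \hom_{\E(R)}\bigl(\mathcal{J}_{q}/\mathcal{J}_{q}^{m+1},\,\E(R)\bigr),$$
with $\mathcal{J}_{q}$ the kernel of $q^\sharp \otimes \id : q^{-1}\O_{\tau(X)\times_k\E(k)} \otimes_{\E(k)} \E(R) \to \E(R)$.

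Next I would show that the two ideal sheaves are related by base change from $k$ to $\E(k)$. Working locally (Proposition~\ref{jetsatapoint} reduces this to the affine setting), we have canonical identifications $q^{-1}\O_{\tau(X)\times_k\E(k)} = p^{-1}\O_{\tau(X)}\otimes_k \E(k)$ and $\E(R) = R \otimes_k \E(k)$, under which the map whose kernel defines $\mathcal{J}_{q}$ becomes $(p^\sharp \otimes \id_R)\otimes_k \id_{\E(k)}$. Since $\E(k)$ is free over $k$, tensoring the short exact sequence defining $\mathcal{J}_p$ with $\E(k)$ yields $\mathcal{J}_{q} \cong \mathcal{J}_p \otimes_k \E(k)$; the same flatness together with Lemma~\ref{kernelpower} (or a direct computation on generators) gives $\mathcal{J}_{q}^{m+1} \cong \mathcal{J}_p^{m+1} \otimes_k \E(k)$, hence
$$\mathcal{J}_{q}/\mathcal{J}_{q}^{m+1} \;\cong\; \bigl(\mathcal{J}_p/\mathcal{J}_p^{m+1}\bigr) \otimes_k \E(k).$$

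Finally I would define $u$ by tensoring a homomorphism with $\E(k)$: for $\psi \in \hom_R(\mathcal{J}_p/\mathcal{J}_p^{m+1},R)$, set
$$u(\psi) \;:=\; \psi \otimes_k \id_{\E(k)} \;:\; (\mathcal{J}_p/\mathcal{J}_p^{m+1})\otimes_k \E(k) \;\longrightarrow\; R \otimes_k \E(k) = \E(R),$$
which is automatically $\E(R)$-linear (as the source and target are extensions of scalars from $R$ to $\E(R)=R\otimes_k\E(k)$). Naturality of all the identifications in $R$ is routine and gives naturality of $u$; functoriality of $u$ in the $k$-algebra $R$ follows at once. The main obstacle, such as it is, lies in this second step: keeping careful track of the sheaf-theoretic base change and confirming that the natural identifications are compatible with the defining maps of $\mathcal{J}_p$ and $\mathcal{J}_{q}$ so that freeness of $\E(k)$ over $k$ can be applied to pass to the $(m+1)$st powers.
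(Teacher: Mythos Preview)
Your proposal is correct and follows essentially the same approach as the paper: identify the target module as the source module tensored with $\E(k)$, and define $u(\nu):=\nu\otimes_k\E(k)$. The only cosmetic difference is that the paper works directly with the description $\jet^m(\tau X)_p(R)=\hom_R(p^*\scoj^{(m)}_{\tau(X)},R)$ and invokes the already-established base-change compatibility of co-jet sheaves (Remark~5.2(b)) to get $(p\times_k\E(k))^*\scoj^{(m)}_{\tau(X)\times_k\E(k)}=p^*\scoj^{(m)}_{\tau(X)}\otimes_k\E(k)$, whereas you pass through the $\mathcal{J}_p/\mathcal{J}_p^{m+1}$ presentation of Proposition~\ref{jetsatapoint} and re-derive that compatibility by hand using freeness of $\E(k)$ over $k$; these are the same argument in two equivalent guises.
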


\begin{proof}
Recall that
$\jet^m\big(\tau(X)\big)_p(R)
=
\hom_R(p^*\scoj^{(m)}_{\tau(X)},R)$
and
\begin{eqnarray*}
\jet^m\big(\tau(X)\times_k\E(k)\big)_{p\times_k\E(k)}\big(\E(R)\big)
&=&
\hom_{\E(R)}\big((p\times_k\E(k))^*(\scoj^{(m)}_{\tau(X)\times_k\E(k)}),\E(R)\big)\\
&=&
\hom_{\E(R)}\big(p^*\scoj^{(m)}_{\tau(X)}\otimes_k\E(k),\E(R)\big)
\end{eqnarray*}
where the identification
$(p\times_k\E(k))^*(\scoj^{(m)}_{\tau(X)\times_k\E(k)})=p^*\scoj^{(m)}_{\tau(X)}\otimes_k\E(k)$ is by the fact that $\scoj^{(m)}_{\tau(X)\times_k\E(k)}$ is just the pull back of $\scoj^{(m)}_{\tau(X)}$ under the projection $\tau(X)\times_k\E(k)\to\tau(X)$.

Now define $u$ to be the map that assigns to the $R$-linear map $\nu:p^*\scoj_{\tau(X)}^{(m)}\to R$ the $\E(R)$-linear map $\nu\otimes_k\E(k): p^*\scoj_{\tau(X)}^{(m)}\otimes_k\E(k)\to \E(R)$.
That is, $u$ is given by base change.
\end{proof}

Under the usual identification $p$ corresponds to an $\E^e(R)$-point of $X$ over $k$, $\hat p:\spec\big(\E^e(R)\big)\to X$.

\begin{lemma}
\label{weilmap}
Applying the $\jet^m$ functor to $r^X_{\E,e}:\tau(X)\times_k\E(k)\to X$, the canonical morphism associated to $\tau(X)$, induces a map
$$v:=v^{X,p}_{m,\E,e}:\jet^m\big(\tau(X)\times_k\E(k)\big)_{p\times_k\E(k)}\big(\E(R)\big)\longrightarrow\jet^{m}(X)_{\hat p}\big(\E^e(R)\big)$$
\end{lemma}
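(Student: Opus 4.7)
The plan is to define $v$ as the map on points induced by applying the $\jet^m$ functor to $r^X_{\E,e}$.

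First, by Remark~\ref{rnotoverk}, $r^X_{\E,e}$ is a morphism of $k$-schemes once $\tau(X)\times_k\E(k)$ is equipped with the structure map to $\spec(k)$ obtained by composing with $\underline{e}:\spec(\E(k))\to\spec(k)$. Covariant functoriality of the jet space construction then yields a $k$-morphism
$$\jet^m(r^X_{\E,e}):\jet^m\bigl(\tau(X)\times_k\E(k)\bigr)\longrightarrow \jet^m(X)$$
lying over $r^X_{\E,e}$.

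Next, one unpacks the action on points. For any $k$-algebra $S$ and any $k$-morphism $q:\spec(S)\to\tau(X)\times_k\E(k)$, functoriality gives a map
$$\jet^m\bigl(\tau(X)\times_k\E(k)\bigr)_q(S)\longrightarrow \jet^m(X)_{r^X_{\E,e}\circ q}(S).$$
Specialising to $S=\E(R)$ with the exponential $k$-algebra structure and $q=p\times_k\E(k)$ (which is a $k$-morphism under that structure, since both source and target then carry the exponential $k$-structure), and invoking Lemma~\ref{taunpoints} to identify $r^X_{\E,e}\circ(p\times_k\E(k))$ with $\hat p:\spec(\E^e(R))\to X$, one extracts the desired map $v$.

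The only real obstacle is bookkeeping: one must keep track of the two distinct $k$-algebra structures on $\E(R)$, namely the standard one (obtained by tensoring $k\to R$ with $s_\E^k$) and the exponential one $k\to\E(k)\to\E(R)$. The jet at $p\times_k\E(k)$ is naturally evaluated over $\E(R)$ with the standard structure, whereas the jet at $\hat p$ is evaluated over $\E^e(R)$; the fact that $r^X_{\E,e}$ is a $k$-morphism precisely when we use the exponential structure on the source is exactly what reconciles these two pictures. If further verification is desired, one can check the construction in coordinates via Proposition~\ref{jetsatapoint}: the pullback $\mathcal{J}_{\hat p}/\mathcal{J}_{\hat p}^{m+1}\to \mathcal{J}_{p\times_k\E(k)}/\mathcal{J}_{p\times_k\E(k)}^{m+1}$ induced by $r^X_{\E,e}$ produces $v$ by dualising, analogously to the proof of Lemma~\ref{basechangemap}.
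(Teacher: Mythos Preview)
Your proposal is correct and follows essentially the same route as the paper: apply the $\jet^m$ functor to $r^X_{\E,e}$, restrict to the fibre over $p\times_k\E(k)$, and invoke Lemma~\ref{taunpoints} to identify the image point with $\hat p$. The paper compresses your bookkeeping discussion into the single remark that $\E(R)=\E^e(R)$ as $\E(k)$-algebras, but the content is the same.
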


\begin{proof}
Note that as $\E(k)$-algebras $\E(R)=\E^e(R)$.
So $p\times_k\E(k)$ can also be viewed as an $\E^e(R)$-point of $\tau(X)\times_k\E(k)$ over $\E(k)$.

Applying the jet functor we get
$\jet^m(r^X_{\E,e}):\jet^m\big(\tau(X)\times_k\E(k)\big)\to\jet^m(X)$.
Since $\hat p=r^X_{\E,e}\circ\big(p\times_k\E(k)\big)$ -- see Lemma~\ref{taunpoints} -- this morphism restricted to the fibre at the $\E^e(R)$-point $p\times_k\E(k)$ yields a morphism
$$\jet^m\big(\tau(X)\times_k\E(k)\big)_{p\times_k\E(k)}\longrightarrow\jet^{m}(X)_{\hat p}$$
Evaluating at $\E^e(R)$-points yields the desired map $v$.
\end{proof}

Our interpolating map is now just the composition of the maps given in the above two lemmas.
More precisely, if $p\in\tau(X)(R)$ and $\nu\in\jet^m\big(\tau(X)\big)_p(R)$ then we define our interpolating map by
$$\phi^X_{m,\E,e}(p,\nu):=\big(\hat p,v(u(\nu))\big)$$
where $u$ is from Lemma~\ref{basechangemap} and $v$ is from Lemma~\ref{weilmap}.
Note that
$$\phi^X_{m,\E,e}(p,\nu)\in\jet^m(X)\big(\E^e(R)\big)=\tau\big(\jet^m(X)\big)(R)$$
Depending on what we wish to emphasise/suppress, we may drop one or more of the subscripts and superscripts on $\phi^X_{m,\E,e}$.

\begin{lemma}
\label{interpislin}
The interpolating map $\phi^X_{m,\E,e}:\jet^m\tau(X)\to\tau\jet^m(X)$ is a morphism of linear spaces over $\tau(X)$.
\end{lemma}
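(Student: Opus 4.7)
The plan is to verify linearity pointwise, using the factorization $\phi = v \circ u$ from Lemmas~\ref{basechangemap} and~\ref{weilmap}. The main preliminary task is to be clear about the linear-space structures on both sides; after that, the verification is essentially formal.

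First, I would clarify the linear-space structure on the target. The jet space $\jet^m(X) \to X$ is a linear space in the sense of Definition~\ref{jetspace}, i.e., an $\O_X$-module scheme. Because the prolongation functor commutes with fibre products (this uses compatibility of Weil restriction with base change, Fact~\ref{weilbc}, and preserves closed embeddings by Proposition~\ref{etaletau}), the abelian group structure $\jet^m(X) \times_X \jet^m(X) \to \jet^m(X)$ prolongs to an abelian group structure $\tau\jet^m(X) \times_{\tau(X)} \tau\jet^m(X) \to \tau\jet^m(X)$, and similarly for scalar multiplication. (For scalar multiplication we get a priori only a $\tau\mathbb{G}_a$-action, but restricting along the canonical inclusion $\mathbb{G}_a \hookrightarrow \tau\mathbb{G}_a$ gives the desired $\mathbb{G}_a$-action over $\tau(X)$.) Thus $\tau\jet^m(X) \to \tau(X)$ is a linear space over $\tau(X)$.

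Next, I would check linearity on fibres at $R$-points, for an arbitrary $k$-algebra $R$ and $p \in \tau(X)(R)$ with associated $\hat p \in X(\E^e(R))$. Proposition~\ref{jetsatapoint} gives
$$\jet^m\tau(X)_p(R) = \hom_R\big(p^*\scoj^{(m)}_{\tau(X)}, R\big),$$
an $R$-module, while on the other side Lemma~\ref{taunpoints} and Proposition~\ref{jetsatapoint} identify
$$\tau\jet^m(X)_p(R) = \jet^m(X)_{\hat p}\big(\E^e(R)\big) = \hom_{\E^e(R)}\big(\hat p^*\scoj^{(m)}_X, \E^e(R)\big),$$
an $\E^e(R)$-module, and hence also an $R$-module via the standard $s_\E^R:R \to \E^e(R)=\E(R)$.

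Finally, I would verify that $\phi_p = v \circ u$ is $R$-linear. The map $u$ sends $\nu$ to $\nu \otimes_k \id_{\E(k)}$ (base change of an $R$-linear map to an $\E(R)$-linear one), and for $r \in R$ we have $u(r\nu) = r\nu \otimes \id = r \cdot u(\nu)$, so $u$ is $R$-linear; additivity is immediate. The map $v$ is precomposition with the fixed $\E(R)$-module morphism $\hat p^*\scoj^{(m)}_X \to (p \times_k \E(k))^*\scoj^{(m)}_{\tau(X) \times_k \E(k)}$ induced by $r^X_{\E,e}$, and hence is $\E(R)$-linear, in particular $R$-linear. So $\phi_p$ is $R$-linear, and since this holds functorially in $(R,p)$, $\phi$ is a morphism of linear spaces over $\tau(X)$. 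The main obstacle, such as it is, is bookkeeping: sorting out the two competing $R$- and $\E^e(R)$-module structures on the target fibre and confirming that the $R$-structure is the one coming from the prolonged linear-space structure on $\tau\jet^m(X)$ over $\tau(X)$; once this is pinned down, the linearity of $u$ and $v$ is immediate.
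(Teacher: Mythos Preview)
Your proposal is correct and follows essentially the same approach as the paper: clarify the $\mathbb{S}_k$-linear structure on $\tau\jet^m(X)\to\tau(X)$ as the restriction along $s:\mathbb{S}\to\E$ of the $\E_k$-linear structure obtained by prolonging (using that $\tau$ preserves products and $\tau(\mathbb{S}_k)=\E_k$), then check that $u$ and $v$ are $R$-linear on fibres over an arbitrary $R$-point $p$ of $\tau(X)$. The paper is slightly terser---it simply asserts that $u$ and $v$ are $R$-linear ``from the definition''---but your more explicit unpacking of the module structures and the bookkeeping remark about the two $R$- versus $\E^e(R)$-structures on the target fibre are exactly the content the paper is gesturing at.
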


\begin{proof}
That $\phi$ is a morphism of schemes over $\tau(X)$ can be derived from the definition, but also follows from the co-ordinate description given in the next section.

First note that the prolongation functor preserves products and that $\tau(\mathbb{S}_k)=\E_k$.
Hence, it takes the $\mathbb{S}_k$-linear space $\jet^m(X)\to X$ to an $\E_k$-linear space $\tau\big(\jet^m(X)\big)\to\tau(X)$.
The latter obtains an $\mathbb{S}_k$-linear space structure from $s:\mathbb{S}\to\E$.
It is with respect to this structure that the lemma is claiming $\phi$ is linear.

It is clear from the definition of $u$ and $v$ in Lemmas~\ref{basechangemap} and~\ref{weilmap} respectively, that for an arbitrary $k$-algebra $R$, and an arbitrary $R$-point $p$ of $\tau(X)$, $u$ and $v$ are $R$-linear.
Hence $\phi$ is $R$-linear on the $R$-points of the fibre above $p$.
As $R$ and $p$ were arbitrary, this implies that $\phi$ is a morphism of linear spaces.
\end{proof}

The fundamental properties of the interpolating map are given in the following proposition.

\begin{proposition}
\label{phi-properties}
The interpolating map satisfies the following properties.
\begin{itemize}
\item[(a)]
{\em Functoriality.}
If $g:X\to Y$ is a morphism of schemes over $k$, then for each $m,n\in\mathbb{N}$ the following diagram commutes
$$\xymatrix{
\jet^m\tau(X)\ar^{\jet^m\tau(g)}[rr]\ar[d]^{\phi^X} && \jet^m\tau(Y)\ar^{\phi^Y}[d] \\
\tau\jet^m(X)\ar_{\tau\jet^m(g)}[rr] && \tau\jet^m(Y)
}$$
\item[(b)]
{\em Compatibility with composition of prolongations.}
Suppose $\F$ is another finite free $\mathbb{S}$-algebra scheme and $f:k\to\F(k)$ is a ring homomorphism agreeing with $s_\F^k$ on $A$.
Then the following commutes
$$\xymatrix{
\jet^m\tau(X,\E\F,ef)\ar[dd]^{\phi^X_{\E\F,ef}}\ar[drr]^{\phi^{\tau(X,\E,e)}_{\F,f}}\\
&&\tau\big(\jet^m\tau(X,\E,e),\F,f\big)\ar[dll]^{\tau(\phi^X_{\E,e},\F,f)}\\
\tau\big(\jet^m(X),\E\F,ef\big)
}$$
\item[(c)]
{\em Compatibility with comparing of prolongations.}
Suppuse $\F$ is another finite free $\mathbb{S}$-algebra scheme, $f:k\to\F(k)$ is a ring homomorphism agreeing with $s_\F^k$ on $A$, and $\alpha:\E\to\F$ is a morphisms of ring schemes such that $\alpha^k\circ e=f$.
Then the following diagram commutes:
$$\xymatrix{
\jet^m\tau(X,\E,e)\ar[d]^{\phi_{\E}}\ar[rr]^{\jet^m(\hat\alpha)}&&\jet^m\tau(X,\F,f)\ar[d]^{\phi_\F}\\
\tau\big(\jet^m(X),\E,e\big)\ar[rr]_{\hat\alpha^{\jet^m(X)}}&&\tau\big(\jet^m(X),\F,f\big)
}$$
\end{itemize}
\end{proposition}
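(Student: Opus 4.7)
My plan is to verify all three commutative diagrams at the level of $R$-points, using the explicit description $\phi(p,\nu)=(\hat p, v(u(\nu)))$ from the definition, together with the formulas for $u$ and $v$ given in Lemmas~\ref{basechangemap} and~\ref{weilmap}. Each of (a), (b), (c) will ultimately reduce to the naturality of two pieces: (i) the base change functor $(-)\otimes_k\E(k)$, which underlies $u$, and (ii) the jet functor applied to the canonical morphisms $r^{(-)}_{\E,e}$, which underlies $v$.

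For part (a), fix a $k$-algebra $R$ and an $R$-point $p$ of $\tau(X)$, and chase an element $\nu\in\jet^m(\tau(X))_p(R)$ around the square. The upper-right path applies $\jet^m\tau(g)$ first, then $\phi^Y$; the lower-left path applies $\phi^X$ first, then $\tau\jet^m(g)$. On the $u$-component, base change along $k\to\E(k)$ is functorial, so $u^{Y,\tau(g)(p)}\circ\jet^m\tau(g)_*=(\jet^m(\tau(g)\times\E(k)))_*\circ u^{X,p}$. On the $v$-component, naturality of the canonical morphism (i.e.\ $g\circ r^X_{\E,e}=r^Y_{\E,e}\circ(\tau(g)\times\id_{\E(k)})$, which is immediate from the definition of $\tau(g)$ via Weil restriction) yields, after applying $\jet^m$, the commuting square needed to intertwine $v^X$ and $v^Y$. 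Under the identification $\tau(\jet^m(Y))(R)=\jet^m(Y)(\E^e(R))$, the map $\tau\jet^m(g)$ is just post-composition with $\jet^m(g)$, which is exactly what $v^Y$ produces after $u^Y$.

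For part (b), use Proposition~\ref{taumtaun} to identify $\tau\tau(X,\E,e)=\tau(X,\E\F,ef)$ and, crucially, Lemma~\ref{rcomposed}, which provides the factorisation $r^X_{\E\F,ef}=r^X_{\E,e}\circ(r^{\tau(X,\E,e)}_{\F,f}\times_k\E(k))$ (interpreted as in Remark~\ref{whatvertmeans}). Applying the $\jet^m$ functor to this factorisation, and combining with the evident double base-change identity $u^{X,p}_{\E\F,ef}=u^{\tau(X,\E,e),p}_{\F,f}\otimes\cdots$ (reflecting that tensoring with $\E(\F(k))$ factors as tensoring with $\F(k)$ then with $\E(k)$, via the canonical isomorphism of Lemma~\ref{rem-emcircen}), yields that $v^X_{\E\F,ef}(u^X_{\E\F,ef}(\nu))$ factors as $v^X_{\E,e}\circ v^{\tau(X,\E,e)}_{\F,f}\circ u^{\tau(X,\E,e)}_{\F,f}\circ u^X_{\E,e}$ applied to $\nu$. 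Reading this identity from both sides recovers the two paths of the triangle.

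For part (c), the key input is Lemma~\ref{rcompared}, which expresses $r^X_{\E,e}=r^X_{\F,f}\circ(\hat\alpha\otimes\id_{\F(k)})\circ(\id\otimes\underline{\alpha})^{-1}$ at the appropriate level. Applying $\jet^m$ to this identity and using that base change of $\nu$ along $\underline\alpha:\spec(\F(k))\to\spec(\E(k))$ is precisely how $\jet^m(\hat\alpha)$ interacts with $u$, we obtain the required commutativity after passing to $R$-points.

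The main obstacle throughout will be bookkeeping: several canonical identifications (most notably the one from Lemma~\ref{rem-emcircen} in part (b), and the Weil-restriction-versus-base-change swap in Remark~\ref{whatvertmeans}) must be applied coherently, and one must take care to distinguish the two $k$-algebra structures on $\E(R)$ when invoking $u$ and $v$. Once these identifications are fixed at the outset, however, each diagram reduces by a mechanical chase to naturality of base change together with the lemmas on $r$-compatibility (\ref{rcompared}, \ref{rcomposed}) already established.
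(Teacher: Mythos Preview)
Your plan is correct and follows essentially the same route as the paper: decompose $\phi$ into its $u$- and $v$-components, verify the $u$-part via naturality of base change, and verify the $v$-part by applying $\jet^m$ to the canonical-morphism identities (the Weil-restriction functoriality for (a), Lemma~\ref{rcomposed} for (b), and Lemma~\ref{rcompared} for (c)). One minor remark: in (c) you rewrite Lemma~\ref{rcompared} with an ``$(\id\otimes\underline{\alpha})^{-1}$'', but no inverse is needed---the lemma gives $r^X_{\F,f}\circ(\hat\alpha\otimes\id)=r^X_{\E,e}\circ(\id\otimes\underline{\alpha})$ directly, and it is this equality (not an inverted form) that you pass through $\jet^m$ and then dualize fibrewise, exactly as the paper does.
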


\begin{proof}
For part~(a), fix $p\in\tau(X)(R)$.
By the funtoriality of the Weil restriction (see Proposition~\ref{weil}) it follows that the following diagram commutes:
$$\xymatrix{
\tau(X)\times_k\E(k)\ar[rr]^{\tau(g)\times_k\E(k)}\ar[d]^{r^X_{\E,e}} && \tau(Y)\times_k\E(k)\ar[d]^{r^Y_{\E,e}}\\
X\ar[rr]^g && Y
}$$
Taking jets and evaluating at $\E^e(R)$-points we get
$$\xymatrix{
\jet^m\big(\tau(X)\times_k\E(k)\big)_{p\times_k\E(k)}\big(\E^e(R)\big)\ar[rr]^{\jet^m\big(\tau(g)\times_k\E(k)\big)}\ar[d]^{v^{X,p}} && \jet^m\big(\tau(Y)\times_k\E(k)\big)_{\tau(g)(p)\times_k\E(k)}\big(\E^e(R)\big)\ar[d]^{v^{Y,\tau(g)(p)}}\\
\jet^m(X)_{\hat p}\big(\E^e(R)\big)\ar[rr]^{\jet^m(g)} && \jet^m(Y)_{\widehat{\tau(g)(p)}}\big(\E^e(R)\big)
}$$
On the other hand,
that the following diagram commutes is clear from the fact that the map $u$ in Lemma~\ref{basechangemap} is just given by base change:
$$\xymatrix{
\jet^m\big(\tau(X)\big)_p(R)\ar[d]^{u^{X,p}}\ar[rr]^{\jet^m\big(\tau(g)\big)} && \jet^m\big(\tau(Y)\big)_{\tau(g)(p)}(R)\ar[d]^{u^{Y,\tau(g)(p)}}\\
\jet^m\big(\tau(X)\times_k\E(k)\big)_{p\times_k\E(k)}\big(\E(R)\big)\ar[rr]^{\jet^m\big(\tau(g)\times_k\E(k)\big)} && \jet^m\big(\tau(Y)\big)_{\tau(g)(p)\times_k\E(k)}\big(\E(R)\big)
}$$
Putting these two commuting squares together yeilds
$$\xymatrix{
\jet^m\big(\tau(X)\big)_p(R)\ar[d]^{v^{X,p}\circ u^{X,p}}\ar[rr]^{\jet^m\big(\tau(g)\big)} && \jet^m\big(\tau(Y)\big)_{\tau(g)(p)}(R)\ar[d]^{v^{Y,\tau(g)(p)}\circ u^{Y,\tau(g)(p)}}\\
\jet^m(X)_{\hat p}\big(\E^e(R)\big)\ar[rr]^{\jet^m(g)} && \jet^m(Y)_{\widehat{\tau(g)(p)}}\big(\E^e(R)\big)
}$$
By the construction of the interpolating map, this in turn implies 
$$\xymatrix{
\jet^m\tau(X)\ar^{\jet^m\tau(g)}[rr]\ar[d]^{\phi^X} && \jet^m\tau(Y)\ar^{\phi^Y}[d] \\
\tau\jet^m(X)\ar_{\tau\jet^m(g)}[rr] && \tau\jet^m(Y)
}$$
as desired.

For part~(b), 
let us make the systematic abbreviation $\tau^\E(X)$ for $\tau(X,\E,e)$.
Fixing $p\in \tau^{\E\F}(X)(R)$ we have the associated points $\hat p\in X\big(\E\F^{ef}(R)\big)$ and $\hat p_\F\in\tau^\E(X)\big(\F^f(R)\big)$.
The contribution to the interpolating map from base change (namely from the map given by Lemma~\ref{basechangemap}) will cause no difficulty and so we only check commuting for the relevant diagram coming from the Weil restriction map (i.e., the map from Lemma~\ref{weilmap}).
From Lemma~\ref{rcomposed} we have the following commuting triangle
$$\xymatrix{
\tau^{\E\F}(X)\times_k\E\F(k)\ar[dd]_{r^{\tau^\E(X)}_{\F,f}\times_k\E(k)}\ar[drr]^{r^X_{\E\F,ef}}\\
&&X\\
\tau^\E(X)\times_k\E(k)\ar[urr]_{r^X_{\E,e}}
}$$
This induces the following morphism of sheaves of $\E\F(R)$-algebras
$$\xymatrix{
p^*(\scoj_{\tau^{\E\F}(X)}^{(m)})\otimes_k\E\F(k)\\
&&\hat p^*(\scoj_X^{(m)})\ar[ull]\ar[dll]\\
\hat p_\F^*(\scoj_{\tau^\E(X)}^{(m)})\otimes_k\E(k)\ar[uu]
}$$
Taking duals, and making natural identifications we get:
$$\xymatrix{
\hom_{\E\F(R)}\big(p^*(\scoj_{\tau^{\E\F}(X)}^{(m)})\otimes_k\E\F(k),\E\F(R)\big)\ar[drr]^{v^{X,p}_{\E\F}}\ar[dd]^{v^{\tau^\E(X),p}_\F\otimes_k\E(k)}\\
&&\hom_{\E\F^{ef}(R)}\big(\hat p^*(\scoj_X^{(m)}),\E\F^{ef}(R)\big)\\
\hom_{\E\big(\F^{f}(R)\big)}\big(\hat p_\F^*(\scoj_{\tau^\E(X)}^{(m)})\otimes_k\E(k),\E(\F^{f}(R))\big)\ar[urr]^{v^{X,\hat p_\F}_\E}
}$$
From here part~(b) is easily verified.

For part~(c) we continue to use the abbreviation $\tau^\E(X)$ for $\tau(X,\E,e)$, and we work with a fixed point $p\in \tau^{\E}(X)(R)$ for some fixed $k$-algebra $R$.
Again we are going to break the desired commutative diagram into two peices, one coming from each of the two ingredients of the interpolating map (namely from the map given by Lemma~\ref{basechangemap} and the map from Lemma~\ref{weilmap}).
To do so, note first of all that there is a natural map
$$\tilde{\alpha}:\jet^m\big(\tau^{\E}(X)\times_k\E(k)\big)_{p\times_k\E(k)}\big(\E(R)\big)\to\jet^m\big(\tau^{\F}(X)\times_k\F(k)\big)_{\hat\alpha(p)\times_k\F(k)}\big(\F(R)\big).$$
Indeed, $\tilde{\alpha}$ is just the composition of the map
$$\jet^m\big(\tau^{\E}(X)\times_k\E(k)\big)_{p\times_k\E(k)}\big(\E(R)\big)\to
\jet^m\big(\tau^{\E}(X)\times_k\F(k)\big)_{p\times_k\F(k)}\big(\F(R)\big)$$
induced by base change from $\E(k)$ to $\F(k)$ using $\alpha$, with
$\jet^m\big(\hat\alpha\times_k\F(k)\big):$
$$\jet^m\big(\tau^{\E}(X)\times_k\F(k)\big)_{p\times_k\F(k)}\big(\F(R)\big)\to
\jet^m\big(\tau^{\F}(X)\times_k\F(k)\big)_{\hat\alpha(p)\times_k\F(k)}\big(\F(R)\big).$$
Hence, to obtain the desired commuting diagram it will suffice to show
\begin{itemize}
\item[(1)]
The following diagram commutes:
$$\xymatrix{
\jet^m\big(\tau^{\E}(X)\big)_p(R)\ar[d]\ar[rr]^{\jet^m(\hat\alpha)} && \jet^m\big(\tau^{\F}(X)\big)_{\hat\alpha(p)}\ar[d]\\
\jet^m\big(\tau^{\E}(X)\times_k\E(k)\big)_{p\times_k\E(k)}\big(\E(R)\big)\ar[rr]^{\tilde{\alpha}} && \jet^m\big(\tau^{\F}(X)\times_k\F(k)\big)_{\hat\alpha(p)\times_k\F(k)}\big(\F(R)\big)
}$$
where the vertical arrows are the base change maps of Lemma~\ref{basechangemap}, and
\item[(2)]
The following diagram commutes:
$$\xymatrix{
\jet^m\big(\tau^{\E}(X)\times_k\E(k)\big)_{p\times_k\E(k)}\big(\E(R)\big)\ar[d]\ar[rr]^{\tilde{\alpha}} && \jet^m\big(\tau^{\F}(X)\times_k\F(k)\big)_{\hat\alpha(p)\times_k\F(k)}\big(\F(R)\big)\ar[d]\\
\jet^m(X)_{\hat p}\big(\E^e(R)\big)\ar[rr]^{\hat\alpha^{\jet^m(X)}} &&
\jet^m(X)_{\widehat{\hat\alpha(p)}}\big(\F^f(R)\big)
}$$
where the vertical arrows are the maps of Lemma~\ref{weilmap}.
\end{itemize}
Diagram~(1) is easily seen to commute by unravelling the definitions and using that fact that jet spaces commute with base change.
So we focus on proving that diagram~(2) commutes.

From lemma~\ref{rcompared} we have the following commuting diagram
$$\xymatrix{
\tau^{\F}(X)\times_k\F(k)\ar[dr]_{r^X_{\F,f}} && \tau^{\E}(X)\times_k\F(k)\ar[dl]^{\ \ r^X_{\E,e}\circ(\id\otimes\underline{\alpha})}\ar[ll]_{\hat\alpha\otimes\id_{\F(k)}}\\
&X
}$$
where $r^X_{\E,e}\circ(\id\otimes\underline{\alpha}):\tau^{\E}(X)\times_k\F(k)\to X$ is the composition of the projection $\tau^{\E}(X)\times_k\F(k)=\tau^{\E}(X)\times_k\E(k)\times_{\E(k)}\F(k)\to \tau^{\E}(X)\times_k\E(k)$ with $r^X_{\E,e}:\tau^{\E}(X)\times_k\E(k)\to X$.
Taking the co-jet sheaves and pulling back by the appropriate morphisms we obtain the following commuting diagram of sheaves on $\spec\big(\F(R)\big)$:
$$\xymatrix{
\big(\hat\alpha(p)\times_k\F(k)\big)^*\scoj_{\tau^{\F}(X)\times_k\F(k)}^m\ar[rr]^c &&
\big(p\times_k\F(k)\big)^*\scoj_{\tau^{\E}(X)\times_k\F(k)}^m\\
&\widehat{\hat\alpha(p)}^*\scoj_X^m\ar[ul]^b\ar[ur]_a
}$$
Note that $r^X_{\E,e}\circ(\id\otimes\underline{\alpha})$ really does take $p\times_k\F(k)$ to $\widehat{\hat\alpha(p)}$ since $\widehat{\hat\alpha(p)}=\hat p\circ\underline{\alpha}$.

Now, let us consider diagram~(2) above.
Unraveling the definitions it is not hard to verify that given
$\gamma\in\jet^m\big(\tau^{\E}(X)\times_k\E(k)\big)_{p\times_k\E(k)}\big(\E(R)\big)$
\begin{itemize}
\item
going clockwise along diagram~(2) takes $\gamma$ to $(\gamma\otimes_{\E(k)}\F(k)\big)\circ c\circ b$; while
\item
going counter-clockwise along diagram~(2) takes $\gamma$ to $(\gamma\otimes_{\E(k)}\F(k)\big)\circ a$.
\end{itemize}
Hence diagram~(2) is commutative, as desired.
\end{proof}

\bigskip
\subsection{The interpolating map in co-ordinates}
We wish to give a co-ordinate description of the interpolating map for affine schemes of finite type.
Since the jet and prolongation functors preserve closed embeddings,
functoriality allows us to reduce this task to affine space.

Fix a finite free $\mathbb{S}$-algebra scheme $\E$ over a ring $A$ and an $A$-algebra $k$ equipped with an $A$-algebra homomorphism $e:k\to\E(k)$.
Let $(e_0=1,e_1,\dots,e_{\ell-1})$ be a basis for $\E(k)$ over $k$.

Consider $\mathbb{A}_k^r=\spec\big(k[x_1,\dots,x_r]\big)$.
Then $\tau\mathbb{A}_k^r=\spec\big(k[\overline{y}_1,\dots,\overline{y}_r]\big)$, where each
$\overline{y}_i=(y_{i,0},\dots,y_{i,\ell-1})$.
Let $x=(x_1,\dots,x_r)$ and $\overline{y}=(\overline{y}_1,\dots,\overline{y}_r)$.

Suppose $R$ is a $k$-algebra, $\overline{a}\in\tau\mathbb{A}_k^r(R)$ and $a\in \mathbb{A}_k^r\big(\E^e(R)\big)$ is the point corresponding to $\overline{a}$.
A straightforward computation using Proposition~\ref{jetsatapoint} shows that
\begin{eqnarray*}
\jet^m(\tau\mathbb{A}_k^r)_{\overline{a}}(R)
& =&
\hom_R\big(R[\overline{y}]_{\overline{a}}/(\overline{y}-\overline{a})^{m+1},R\big)\\
\jet^m(\mathbb{A}_k^r)_a\big(\E^e(R)\big)
&=&
\hom_{\E^e(R)}\big(\E^e(R)[x]_a/(x-a)^{m+1},\E^e(R)\big)
\end{eqnarray*}
where $R[\overline{y}]_{\overline{a}}$ is the localisation of $R[\overline{y}]$ at $\{f\in R[\overline{y}]:f(\overline{a})\in R^\times\}$, and $\E^e(R)[x]_a$ is the localisation of $\E^e(R)[x]$ at $\{f\in \E^e(R)[x]:f(a)\in \E^e(R)^\times\}$.
Our interpolating map
$$\hom_R\big(R[\overline{y}]_{\overline{a}}/(\overline{y}-\overline{a})^{m+1},R\big)\longrightarrow \hom_{\E^e(R)}\big(\E^e(R)[x]_a/(x-a)^{m+1},\E^e(R)\big)$$
is given by $f\longmapsto \big(f\times_k\E(k)\big)\circ r^*$ where
$$r^*:\E^e(R)[x]_a/(x-a)^{m+1}\longrightarrow \E(R)[\overline{y}]_{\overline{a}}/(\overline{y}-\overline{a})^{m+1}$$
is the map induced by $\displaystyle x_i\longmapsto \sum_{j=0}^{\ell-1}y_{i,j}e_j$.
So to compute the interpolating map on co-ordinates, we need to compute $r^*$ on the monomial basis for $\E^e(R)[x]_a/(x-a)^{m+1}$ over $\E(R)$.
To that end, fix $\beta\in\mathbb{N}^r$ with $0<| \beta |\leq m$ and compute in multi-index notation
\begin{eqnarray*}
\label{finitial}
r^*(x^\beta)
& = &
\prod_{i=1}^r\big(\sum_{j=0}^{\ell-1}y_{i,j}e_j\big)^{\beta_i}\\
& = &
\sum_{\gamma=(\gamma_1,\dots,\gamma_r)\in\mathbb{N}^{\ell r}, |\gamma_i|=\beta_i}\overline{y}^{\gamma}(e_0,\dots,e_{\ell-1})^{\sum_{i=1}^r\gamma_i}
\end{eqnarray*}
Set $\Gamma_\beta:=\{\gamma=(\gamma_1,\dots,\gamma_r)\in\mathbb{N}^{\ell r} \ : \  |\gamma_i|=\beta_i\}$ and for each $\gamma\in\Gamma_\beta$, expand
\begin{eqnarray}
\label{expand}
(e_0,\dots,e_{\ell-1})^{\sum_{i=1}^r\gamma_i}=\sum_{j=0}^{\ell-1}c_{\gamma,j}e_j
\end{eqnarray}
for some $c_{\gamma,j}\in A$.
So we have
\begin{eqnarray}
\label{ronbeta}
r^*(x^\beta) & = & \sum_{j=0}^{\ell-1}\big(\sum_{\gamma\in\Gamma_\beta} c_{\gamma,j}\overline{y}^\gamma\big) e_j
\end{eqnarray}

\begin{remark}
\label{simplify}
Fix $\beta\in\mathbb{N}^r$ with $0<|\beta|\leq m$.
Let
$$\hat\beta:=(\beta_1,0,\dots;\beta_2,0,\dots;\dots;\beta_r,0,\dots,0)\in\Gamma_\beta\subset \mathbb{N}^{\ell r}.$$
Then
\begin{itemize}
\item
$c_{\hat\beta,0}=1$,
\item
$c_{\hat\beta,j}=0$ for all $j\neq 0$, and
\item
$c_{\gamma,0}=0$ for all $\gamma\in\Gamma_\beta\setminus \{\hat\beta\}$.
\end{itemize}
Indeed, this is because $e_0=1$ and if $\gamma\in\Gamma_\beta\setminus\{\hat\beta\}$ then $\displaystyle (e_0,\dots,e_{\ell-1})^{\sum_{i=1}^r\gamma_i}$ is in the kernel of the reduction map $\E(k)\to k$.
\end{remark}

We can already prove the following surjectivity result:

\begin{proposition}
\label{affinesurjective}
The interpolating map on affine space, $\phi:\jet^m\tau\mathbb{A}_k^r\to\tau\jet^m\mathbb{A}_k^r$, is surjective.
\end{proposition}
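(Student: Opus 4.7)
The plan is to deduce surjectivity of $\phi$ from the linear-algebraic description that has just been set up. Since Lemma~\ref{interpislin} says $\phi:\jet^m\tau\mathbb{A}_k^r\to\tau\jet^m\mathbb{A}_k^r$ is a morphism of linear spaces over $\tau\mathbb{A}_k^r$, it suffices to check fibrewise surjectivity at every $R$-point, for every $k$-algebra $R$. Fix such an $R$ and an $R$-point $\overline{a}$ of $\tau\mathbb{A}_k^r$ with corresponding $\E^e(R)$-point $a$ of $\mathbb{A}_k^r$; a translation in the $x$- and $\overline{y}$-coordinates reduces to the case $\overline{a}=0$ and $a=0$. The fibre of $\phi$ over $\overline{a}$ is then the $R$-linear map $\Phi: f\mapsto (f\otimes_k\E(k))\circ r^*$ made explicit just above the proposition.

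Given an arbitrary $g\in\jet^m(\mathbb{A}_k^r)_0\bigl(\E^e(R)\bigr)$, write $g(x^\beta)=\sum_j g_{\beta,j}e_j$ for each multi-index $\beta$ with $0<|\beta|\leq m$. By equation~(\ref{ronbeta}), producing a preimage $f$ amounts to solving, for each such $\beta$, the $R$-linear system
\[
\sum_{\gamma\in\Gamma_\beta}c_{\gamma,j}\,f(\overline{y}^\gamma)=g_{\beta,j},\qquad j=0,\dots,\ell-1,
\]
in the unknowns $\{f(\overline{y}^\gamma):\gamma\in\Gamma_\beta\}$. Since $\Gamma_\beta\cap\Gamma_{\beta'}=\emptyset$ for $\beta\neq\beta'$, the systems decouple, and it suffices to solve $\ell$ equations in $|\Gamma_\beta|$ unknowns for each $\beta$.

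The main (and only real) step is thus the combinatorial one: exhibit $\gamma^{(\beta,0)},\dots,\gamma^{(\beta,\ell-1)}\in\Gamma_\beta$ whose columns in the matrix $(c_{\gamma,j})$ form the $\ell\times\ell$ identity. I would leverage $e_0=1$: fix any $i^*$ with $\beta_{i^*}\geq 1$ and define $\gamma^{(\beta,j)}$ to agree with $\hat\beta$ except at the $i^*$-th block, where one unit of $e_0$-mass is replaced by one unit of $e_j$-mass. Then $\sum_i\gamma^{(\beta,j)}_i$ has $|\beta|-1$ in position $0$ and $1$ in position $j$, so by~(\ref{expand})
\[
(e_0,\dots,e_{\ell-1})^{\sum_i\gamma^{(\beta,j)}_i}=e_0^{|\beta|-1}\cdot e_j=e_j,
\]
giving $c_{\gamma^{(\beta,j)},k}=\delta_{j,k}$. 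The indices $\gamma^{(\beta,j)}$ are pairwise distinct (one recovers $\beta$ from the row-sums $(|\gamma^{(\beta,j)}_i|)_i$ and then reads $j$ off the displaced slot), so setting $f(\overline{y}^{\gamma^{(\beta,j)}}):=g_{\beta,j}$ for all $(\beta,j)$ and $f(\overline{y}^\gamma):=0$ for every other $\gamma$ with $0<|\gamma|\leq m$ consistently defines an $R$-linear $f$ on the basis of $\mathfrak{n}/\mathfrak{n}^{m+1}$. A one-line check using $c_{\gamma^{(\beta,k)},j}=\delta_{j,k}$ verifies $\Phi(f)=g$, completing the proof. The argument is in essence a strengthening of Remark~\ref{simplify}: that remark identifies the $j=0$ row of the coefficient matrix, and the analogous $e_j$-padding trick supplies the rows for $j\geq 1$.
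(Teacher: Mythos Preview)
Your argument is correct and follows the same overall strategy as the paper's: work fibrewise at an arbitrary $R$-point $\overline{a}$ of $\tau\mathbb{A}_k^r$ and use the explicit formula~(\ref{ronbeta}) for $r^*$ to exhibit preimages of a generating set of the target fibre. In fact your proof is more complete than the paper's own. The paper only checks that the functional $g_\beta$ (your notation) sending $x^\beta\mapsto 1\in\E(R)$ lies in the image, via $f_{\hat\beta}$, and asserts that this suffices; but $\phi$ is only $R$-linear (not $\E(R)$-linear, cf.\ Lemma~\ref{interpislin}), and the target fibre has $R$-basis $\{e_jg_\beta : 0\le j\le\ell-1,\ 0<|\beta|\le m\}$, so one must also produce preimages of $e_jg_\beta$ for $j\ge 1$. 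Your indices $\gamma^{(\beta,j)}$, obtained by shifting one unit of $e_0$-mass in $\hat\beta$ to the $j$th slot, accomplish exactly this and thereby close that gap. Your closing remark is apt: the paper's Remark~\ref{simplify} handles the $j=0$ row, and your construction is the natural extension to all $j$.
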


\begin{proof}
Fix a $k$-algebra $R$, a point $\overline{a}\in\tau\mathbb{A}_k^r(R)$, and let $a\in \mathbb{A}_k^r\big(\E^e(R)\big)$ be the point corresponding to $\overline{a}$, as above.
We need to show that
$$\hom_R\big(R[\overline{y}]_{\overline{a}}/(\overline{y}-\overline{a})^{m+1},R\big)\longrightarrow \hom_{\E^e(R)}\big(\E^e(R)[x]_a/(x-a)^{m+1},\E^e(R)\big)$$
given by $f\longmapsto \big(f\times_k\E(k)\big)\circ r^*$ is surjective.
Fix an arbitrary $\beta\in\mathbb{N}^r$ with $0<|\beta|\leq m$ and let $\nu\in\hom_{\E^e(R)}\big(\E^e(R)[x]_a/(x-a)^{m+1},\E^e(R)\big)$ be such that it takes $x^\beta$ to $1$ and all other monomial basis elements to $0$.
It suffices to show that $\nu$ is in the image of the interpolating map.
Define $f\in \hom_R\big(R[\overline{y}]_{\overline{a}}/(\overline{y}-\overline{a})^{m+1},R\big)$ such that $f(\overline{y}^{\hat\beta})=1$ and $f$ sends all other monomial basis elements to $0$.
Then
\begin{eqnarray*}
\big(f\times_k\E(k)\big)\circ r^*(x^\beta)
&=&
\sum_{j=0}^{\ell-1}f\big(\sum_{\gamma\in\Gamma_\beta} c_{\gamma,j}\overline{y}^\gamma\big) e_j\\
&=&
\sum_{j=0}^{\ell-1}c_{\hat\beta,j}e_j\\
&=&
e_0\\
 &=& 1
\end{eqnarray*}
where the first equality is by~(\ref{ronbeta}) and the penultimate equality is by Remark~\ref{simplify}.
On the other hand, for $\beta'\neq\beta$, since $\hat\beta\notin\Gamma_{\beta'}$, a similar calculation shows that $\big(f\times_k\E(k)\big)\circ r^*(x^{\beta'})=0$.
So $\big(f\times_k\E(k)\big)\circ r^*=\nu$, as desired.
\end{proof}

We return to the computation of the interpolating map on co-ordinates.
Fix $\gamma=(\gamma_1,\dots,\gamma_r)\in\mathbb{N}^{\ell r}$ such that $0<|\gamma|\leq m$ and consider the basis element $f_\gamma\in \hom_R\big(R[\overline{y}]_{\overline{a}}/(\overline{y}-\overline{a})^{m+1},R\big)$ which takes $\overline{y}^{\gamma}$ to $1$ and sends all other monomial basis elements to $0$.
We now compute what the interpolating map does to $f_\gamma$.
Fix $\beta\in\mathbb{N}^r$ with $0<|\beta|\leq m$.

\begin{eqnarray*}
\big(f_\gamma\times_k\E(k)\big)\circ r^*(x^\beta)
&=&
\sum_{j=0}^{\ell-1}f_\gamma\big(\sum_{\gamma'\in\Gamma_\beta} c_{\gamma,j}\overline{y}^{\gamma'}\big) e_j\\
&=&
\left\{ \begin{array}{ll}
\sum_{j=0}^{\ell-1}c_{\gamma,j}e_j & \mbox{if $\gamma\in\Gamma_\beta$};\\
        0 & \mbox{otherwise}.\end{array} \right.
\end{eqnarray*}
where the first equality is by~(\ref{ronbeta}) and the coefficients $c_{\gamma,j}\in A$ are from~(\ref{expand}).
Set $\tilde\gamma=(|\gamma_1|,\dots,|\gamma_r|)\in\mathbb{N}^r$.
So $\tilde\gamma$ is the multi-index such that $\gamma\in\Gamma_{\tilde\gamma}$.
We have shown that
\begin{eqnarray*}
\phi(f_\gamma)
&=&
\big(\sum_{j=0}^{\ell-1}c_{\gamma,j}e_j\big)g_{\tilde\gamma}
\end{eqnarray*}
where $g_{\tilde\gamma}\in\hom_{\E^e(R)}\big(\E^e(R)[x]_a/(x-a)^{m+1},\E^e(R)\big)$ takes $x^{\tilde\gamma}$ to $1$ and all other monomial basis elements to $0$.

For an arbitrary element  $\displaystyle f=\sum_{\gamma\in\mathbb{N}^{\ell r}, 0<|\gamma|\leq m}u_\gamma f_\gamma$ of $\hom_R\big(R[\overline{y}]_{\overline{a}}/(\overline{y}-\overline{a})^{m+1},R\big)$, where the $u_\alpha\in R$, we can then compute
\begin{eqnarray*}
\label{phionmu}
\phi(f)
& = &
\sum_{\gamma\in\mathbb{N}^{\ell r}, 0<|\gamma|\leq m}u_\gamma\big(\sum_{j=0}^{\ell-1}c_{\gamma,j}e_jg_{\tilde\gamma}\big)\\
&=&
\sum_{\beta\in\mathbb{N}^r,0<|\beta|\leq m}
\left(\sum_{j=0}^{\ell-1}\big(\sum_{\gamma\in\Gamma_\beta}u_\gamma c_{\gamma,j}\big)e_j\right)g_\beta
\end{eqnarray*}
We have shown:

\begin{proposition}
\label{phicoord}
If $R$ is a $k$-algebra and $(\overline{a},f)\in\jet^m\tau\mathbb{A}_k^r(R)$, where $\displaystyle f=\sum_{\gamma\in\mathbb{N}^{\ell r}, 0<|\gamma|\leq m}u_\gamma f_\gamma$, then $\phi(\overline{a},f)\in\tau\jet^m\mathbb{A}_k^r(R)$ is given by
$$\left(
\overline{a},
u_{\hat\beta},
\big(\sum_{\gamma\in\Gamma_\beta}u_\gamma c_{\gamma,1}\big),
\big(\sum_{\gamma\in\Gamma_\beta}u_\gamma c_{\gamma,2}\big),\dots,
\big(\sum_{\gamma\in\Gamma_\beta}u_\gamma c_{\gamma,\ell-1}\big)
\right)_{\beta\in\mathbb{N}^r,0<|\beta|\leq m}
$$
where the coefficients $c_{\gamma,j}\in A$ come from~(\ref{ronbeta}) above.
Stated another way, on co-ordinate rings,
$$\phi^*:k[\overline{y}, \{w_{\beta,i}: 0\leq i\leq\ell-1, \beta\in\mathbb{N}^r,0<|\beta|\leq m\}]\to k[\overline{y},\{z_\gamma:\gamma\in\mathbb{N}^{\ell r},0<|\gamma|\leq m\} ]$$
is given by
\begin{itemize}
\item
$\overline{y}\longmapsto\overline{y}$
\item
$w_{\beta,0}\longmapsto z_{\hat\beta}$ for each $\beta\in\mathbb{N}^r,0<|\beta|\leq m$
\item
$\displaystyle w_{\beta,j}\longmapsto \big(\sum_{\gamma\in\Gamma_\beta}c_{\gamma,j}\big)z_\gamma$
for each $\beta$ and $j=1,\dots,\ell-1$.
\qed
\end{itemize}
\end{proposition}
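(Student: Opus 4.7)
The plan is to assemble the explicit computation that precedes the proposition with a careful bookkeeping of the coordinates on the two linear spaces over $\tau\mathbb{A}_k^r$ that are being compared.

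First I would fix coordinates for the source and target. We already have $\tau\mathbb{A}_k^r=\spec\bigl(k[\overline{y}]\bigr)$; by Proposition~\ref{jet-coordinate} applied to the zero ideal, $\jet^m\tau\mathbb{A}_k^r$ becomes the affine space $\spec\bigl(k[\overline{y},(z_\gamma)_{\gamma\in\mathbb{N}^{\ell r},\,0<|\gamma|\leq m}]\bigr)$, with the $z_\gamma$'s dual to the monomial basis $\overline{y}^\gamma$ of $R[\overline{y}]_{\overline{a}}/(\overline{y}-\overline{a})^{m+1}$. Similarly, $\jet^m\mathbb{A}_k^r=\spec\bigl(k[x,(w_\beta)_{\beta\in\mathbb{N}^r,\,0<|\beta|\leq m}]\bigr)$; applying the prolongation functor and using the presentation from Remark~\ref{presentation}, $\tau\jet^m\mathbb{A}_k^r=\spec\bigl(k[\overline{y},(w_{\beta,i})_{0<|\beta|\leq m,\,0\leq i\leq\ell-1}]\bigr)$, where the $w_{\beta,i}$ are the $\ell$ coordinates coming from writing each $w_\beta=\sum_i w_{\beta,i}e_i$ in the prolongation expansion.

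Second, I would invoke Lemma~\ref{interpislin} to reduce to checking $\phi$ on a basis of each fibre. The calculation done in the text just before the statement -- namely the formula $\phi(f_\gamma)=\bigl(\sum_{j=0}^{\ell-1}c_{\gamma,j}e_j\bigr)g_{\tilde\gamma}$ -- together with the linearity formula for $f=\sum u_\gamma f_\gamma$ displayed in equation~(\ref{phionmu}), immediately yields the point-wise description asserted in the proposition: the $\beta$-block of $\phi(\overline{a},f)$ is $\sum_{j=0}^{\ell-1}\bigl(\sum_{\gamma\in\Gamma_\beta}u_\gamma c_{\gamma,j}\bigr)e_j$, whose $e_j$-coefficient is exactly the value of $w_{\beta,j}$ on $\phi(\overline{a},f)$. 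Reading off the $j=0$ case and using Remark~\ref{simplify} (which forces $c_{\gamma,0}=0$ for $\gamma\neq\hat\beta$ and $c_{\hat\beta,0}=1$) collapses the $j=0$ sum to $u_{\hat\beta}$, giving the special form $w_{\beta,0}\mapsto z_{\hat\beta}$.

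Finally, to pass from the $R$-points description to the coordinate-ring description of $\phi^*$, I would simply dualize: the identity $w_{\beta,j}\bigl(\phi(\overline{a},f)\bigr)=\sum_{\gamma\in\Gamma_\beta}c_{\gamma,j}\,z_\gamma(\overline{a},f)$, holding functorially in $R$ and $(\overline{a},f)$, is equivalent by Yoneda to the formula $\phi^*(w_{\beta,j})=\sum_{\gamma\in\Gamma_\beta}c_{\gamma,j}z_\gamma$, and the same reasoning gives $\phi^*(\overline{y})=\overline{y}$ (since $\phi$ lies over $\tau(X)$). There is no genuine obstacle -- the whole proof amounts to organising the identifications that have already been computed -- but the one point to be careful about is to distinguish the two bases $\{f_\gamma\}$ and $\{g_\beta\}$ and not conflate $\Gamma_\beta\subseteq\mathbb{N}^{\ell r}$ with $\{\beta\}\subseteq\mathbb{N}^r$, so that the transition from the expansion of $r^*(x^\beta)$ in~(\ref{ronbeta}) to the dual statement about $\phi^*(w_{\beta,j})$ is transparent.
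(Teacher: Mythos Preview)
Your proposal is correct and follows the paper's approach exactly: the proposition is stated with a \qed\ because it is simply a summary of the running computation in the subsection, and you have correctly identified that the proof consists of (i) fixing coordinates on both sides via Proposition~\ref{jet-coordinate} and the Weil-restriction presentation, (ii) citing the already-computed formula $\phi(f_\gamma)=\bigl(\sum_j c_{\gamma,j}e_j\bigr)g_{\tilde\gamma}$ and its linear extension, (iii) using Remark~\ref{simplify} to collapse the $j=0$ term, and (iv) reading off $\phi^*$ by Yoneda. There is nothing to add.
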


The following corollary of the above co-ordinate description will be useful in a sequel to this article where we study ``$\E$-schemes" and their ``$\E$-jets".

\begin{corollary}
\label{phiprops}
Suppose $k$ is a field and $X$ is of finite type over $k$.
If $p\in X(k)$ is smooth then $\phi$ restricts to a surjective linear map between the fibres of $\jet^m\tau(X)$ and $\tau\jet^m(X)$ over $\nabla(p)\in\tau(X)(k)$.
\end{corollary}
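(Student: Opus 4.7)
The strategy is to use the \'etale-local structure of $X$ at a smooth point to reduce to the already-established surjectivity for affine space (Proposition~\ref{affinesurjective}). Since $p \in X(k)$ is smooth, standard algebraic geometry provides a Zariski open neighborhood $U$ of $p$ in $X$ together with an \'etale morphism $g\colon U \to \mathbb{A}_k^r$, where $r = \dim_p X$.

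First I would check that the fibres over $\nabla(p)$ are insensitive to shrinking from $X$ to $U$. Remark~\ref{restrictopenjet} directly gives $\jet^m\tau(X)_{\nabla(p)} = \jet^m\tau(U)_{\nabla(p)}$, using that $\tau(U) \subseteq \tau(X)$ is open (since open immersions are \'etale and prolongation preserves \'etale, by Proposition~\ref{etaletau}). On the other side, Proposition~\ref{nablafunctorial}(b) applied to $\jet^m(X) \to X$ gives $\tau\jet^m(X)_{\nabla(p)} = \tau(\jet^m(X)_p)$, and Remark~\ref{restrictopenjet} gives $\jet^m(X)_p = \jet^m(U)_p$; running the same argument for $U$ then yields $\tau\jet^m(X)_{\nabla(p)} = \tau\jet^m(U)_{\nabla(p)}$. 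Under these identifications $\phi^X$ on the fibre becomes $\phi^U$, so it suffices to prove surjectivity of $\phi^U_{\nabla(p)}$.

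Next I would apply the functoriality of the interpolating map (Proposition~\ref{phi-properties}(a)) to $g$ to obtain a commutative square relating $\phi^U$ and $\phi^{\mathbb{A}_k^r}$, over the morphism $\tau(g)$. Restricting to the fibres over $\nabla(p)$ and $\tau(g)(\nabla(p)) = \nabla(g(p))$ (the latter equality by Proposition~\ref{nablafunctorial}(a)), I claim both horizontal maps become isomorphisms. The top map is $\jet^m(\tau(g))_{\nabla(p)}$, which is an isomorphism by Lemma~\ref{etalejet} since $\tau(g)$ is \'etale (Proposition~\ref{etaletau}). For the bottom map, Proposition~\ref{nablafunctorial}(b) identifies $\tau\jet^m(U)_{\nabla(p)}$ with $\tau(\jet^m(U)_p)$, and likewise on the affine side; under these identifications the bottom map is $\tau$ applied to $\jet^m(g)_p\colon \jet^m(U)_p \to \jet^m(\mathbb{A}_k^r)_{g(p)}$, which is an isomorphism by Lemma~\ref{etalejet} and therefore remains one after applying the functor $\tau$. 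Since Proposition~\ref{affinesurjective} was proved fibrewise, $\phi^{\mathbb{A}_k^r}_{\nabla(g(p))}$ is surjective; chasing the commutative square gives surjectivity of $\phi^U_{\nabla(p)}$. Linearity of the fibre map is immediate from Lemma~\ref{interpislin}, which says $\phi$ is a morphism of linear spaces over $\tau(X)$.

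The main obstacle is purely bookkeeping: one has to assemble the fibre identifications correctly, in particular on the target side where the fibre of $\tau\jet^m(U) \to \tau(U)$ over $\nabla(p)$ has to be recognised as $\tau$ of the jet-space fibre at $p$ so that functoriality of $\tau$ can carry Lemma~\ref{etalejet} across the reduction. The substantive geometric input is the conjunction of two preservation facts -- $\tau$ preserves \'etale (Proposition~\ref{etaletau}) and $\jet^m$ of an \'etale morphism is fibrewise an isomorphism (Lemma~\ref{etalejet}) -- together with the computational case of affine space (Proposition~\ref{affinesurjective}).
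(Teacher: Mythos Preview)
Your proof is correct and follows essentially the same route as the paper: reduce to an open neighbourhood $U$ admitting an \'etale map to $\mathbb{A}_k^r$, use functoriality of $\phi$ together with the facts that $\tau$ preserves \'etale morphisms and that $\jet^m$ of an \'etale map is a fibrewise isomorphism, and invoke the affine case. The only noteworthy difference is in the argument for the bottom isomorphism: you identify the fibre $\tau\jet^m(U)_{\nabla(p)}$ with $\tau(\jet^m(U)_p)$ via Proposition~\ref{nablafunctorial}(b) and then apply $\tau$ to the isomorphism $\jet^m(g)_p$ from Lemma~\ref{etalejet} (with $R=k$), whereas the paper instead works at an arbitrary $p'\in\tau(X)_p(k^{\alg})$, passes to the corresponding $\E^e(k^{\alg})$-point $\hat p$, and applies Lemma~\ref{etalejet} directly with $R=\E^e(k^{\alg})$. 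The paper's version thus yields a mildly stronger conclusion (surjectivity over every $p'$ in the fibre, not just $\nabla(p)$), but your argument is cleaner for the corollary as stated.
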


\begin{proof}
By Lemma~\ref{interpislin}, $\phi$ is a morphism of linear spaces over $\tau(X)$, and so all that requires proof is the surjectivity.

By smoothness at $p$, there exists an \'etale map from a nonempty smooth Zariski open subset $U\subseteq X$ containing $p$ to $\mathbb{A}_k^r$ for some $r\geq 0$.
Note that $\nabla(p)\in\tau(U)$ and $\jet^m(\tau U)=\jet^m(\tau X)|_{\tau U}$.
Hence $\jet^m(\tau U)_{\nabla(p)}=\jet^m(\tau X)_{\nabla(p)}$.
Also, since $\jet^m(U)=\jet^m(X)|_U$, we have
$$\big(\tau\jet^m(U)\big)_{\nabla(p)}=\tau(\jet^m U_p)=\tau(\jet^m X_p)=\big(\tau\jet^m(X)\big)_{\nabla(p)}$$
where the first and final equalities are by Proposition~\ref{nablafunctorial}(b).
So, without loss of generality, we may replace $X$ by $U$, and assume there is an \'etale map $f:X\to\mathbb{A}_k^r$.
Indeed, under this hypothesis, we will show that for any $p'\in\tau(X)_p(k^{\alg})$, $\phi_{p'}$ is a surjection from $\jet^m(\tau X)_{p'}$ to $(\tau \jet^mX)_{p'}$.
For $X=\mathbb{A}_k^r$ this is Proposition~\ref{affinesurjective}.

By Proposition~\ref{etaletau}, $\tau(f)$ is \'{e}tale, and so by Lemma~\ref{etalejet}, $\jet^m\big(\tau(X)\big)_{p'}$ is isomorphic to $\jet^m\big(\tau(\mathbb{A}_k^r)\big)_{\tau(f)(p')}$.
It remains therefore to prove that $\tau\big(\jet^m(f)\big)$ induces an isomorphism between the fibres over $p'$ and $\tau(f)(p')$ in the following diagram
$$\xymatrix{
\tau\big(\jet^m(X)\big)\ar[rr]^{\tau\big(\jet^m(f)\big)}\ar[d] && \tau\big(\jet^m(\mathbb{A}_k^r)\big)\ar[d]\\
\tau(X)\ar[rr]^{\tau(f)} & &\tau(\mathbb{A}_k^r)
}$$
This in turn reduces to showing that
if $\hat p$ is the $\E^e(k^{\alg})$-point of $X$ corresponding to $p'$, then in the following diagram
$$\xymatrix{
\jet^m(X)\ar[rr]^{\jet^m(f)}\ar[d] & & \jet^m(\mathbb{A}_k^r)\ar[d]\\
X\ar[rr]^{f} && \mathbb{A}_k^r
}$$
$\jet^{m}(f)$ induces an isomorphism between the fibres over $\hat p$ and $f(\hat p)$.
But as $f$ is \'etale, this is just Lemma~\ref{etalejet} with $R=\E^e(k^{\alg})$ apllied to $\hat{p}$.
\end{proof}

\bigskip
\bigskip


\end{document}